\theoremstyle{plain}
\newtheorem{lemma}{Lemma}[section]
\newtheorem{theorem}[lemma]{Theorem}
\newtheorem{proposition}[lemma]{Proposition}
\theoremstyle{remark}
\definecolor{blue}{rgb}{0.0,0.02,0.67}
\def\bb{\begin{color}{blue}}
\def\bw{\begin{color}{white}}
\def\bg{\begin{color}{green}}
\def\br{\begin{color}{red}}
\def\bbr{\begin{color}{brown}}
\def\eg{\end{color}}
\def\ew{\end{color}}
\def\er{\end{color}}
\def\eb{\end{color}}
\def\jj{\j}
\def\jj{}
\def\PP{\mathbb{P}}
\def\H{\mathbb{H}}
\def\ve{{\varepsilon}}
\def\le{\leqslant}
\def\leq{\leqslant}
\def\tn{\interleave}
\def\ge{\geqslant}
\def\geq{\geqslant}
\def\E{{\mathbb E}}
\def\O{{\Omega}}
\def\R{{\mathbb R}}
\def\C{{\mathbb C}}
\def\N{{\mathbb N}}
\def\Z{{\mathbb Z}}
\def\a{{\alpha}}
\def\b{{\beta}}
\def\d{{\delta}}
\def\D{{\Delta}}
\def\t{{\tau}}
\def\g{{\gamma}}
\def\s{{\sigma}}
\def\l{{\lambda}}
\def\L{{\Lambda}}
\def\th{{\theta}}
\def\Th{{\Theta}}
\def\cR{{\cal R}}
\def\cC{{\cal C}}
\def\cF{{\cal F}}
\def\cH{{\cal H}}
\def\cW{{\cal W}}
\def\cM{{\cal M}}
\def\cN{{\cal N}}
\def\bw{{\text{\boldmath$w$}}}
\def\q{\quad}
\def\re{\operatorname{Re}}
\def\im{\operatorname{Im}}
\def\cp{\operatorname{cap}}
\def\<{\langle}
\def\>{\rangle}
\def\ra{\rightarrow}
\def\sse{\subseteq}
\def\sm{\setminus}
\def\cadlag{{c{\`a}dl{\`a}g }}
\begin{document}
\bibliographystyle{plain}


\begin{center}
\LARGE \textbf{Scaling limits for planar aggregation with subcritical fluctuations}

\vspace{0.2in}

\large {\bfseries 
James Norris\footnote{Statistical Laboratory, Centre for Mathematical Sciences, Wilberforce Road, Cambridge, CB3 0WB, UK. 
 Email:  j.r.norris@statslab.cam.ac.uk}, 
Vittoria Silvestri\footnote{University of Rome La Sapienza, Piazzale Aldo Moro 5, 00185, Rome, Italy. \\ 
Email: silvestri@mat.uniroma1.it},   
Amanda Turner\footnote{Department of Mathematics and Statistics, Lancaster University, Lancaster LA1 4YF, UK. \\
Email: a.g.turner@lancaster.ac.uk}
}


\end{center}
\begin{abstract} 
We study scaling limits of a family of planar random growth processes in which clusters grow by the successive aggregation of small particles. 
In these models, clusters are encoded as a composition of conformal maps and the location of each successive particle is distributed according to the density of harmonic measure on the cluster boundary, raised to some power. 
We show that, when this power lies within a particular range, the macroscopic shape of the cluster converges to a disk, but that as the power approaches the edge of this range the fluctuations approach a critical point, which is a limit of stability. The methodology developed in this paper provides a blueprint for analysing more general random growth models, such as the Hastings-Levitov family. 
\end{abstract}

\setcounter{tocdepth}{2} 
\tableofcontents


\section{Introduction}
\label{INT}
We study a family of planar random growth processes in which clusters grow by the successive aggregation of particles. 
Clusters are encoded as a composition of conformal maps, following an approach first introduced by  Carleson and Makarov \cite{C+M} and Hastings and Levitov \cite{HL}. 
The specific models that we study fall into the class of Laplacian growth models in which the growth rate of the cluster boundary is determined by the density of harmonic measure of the boundary as seen from infinity. 
In our case, the location of each successive particle is distributed according to the density of harmonic measure raised to some power. 
Our set-up is closely related to that of the Hastings-Levitov family of models,  HL($\a$), $\a \in [0,\infty)$ \cite{HL}, which includes off-lattice versions of the physically occurring dielectric-breakdown models \cite{NPW}, 
in particular the Eden model for biological growth \cite{Eden} and diffusion-limited aggregation (DLA) \cite{W+S}.
Our family of models shares with the HL($0$) model the unphysical feature that new particles are distorted by the conformal map
which encodes the current cluster.
However, in subsequent work \cite{NST2}, we show that these models share common behaviour with the HL($\a$) models when $\a \neq 0$, 
so the present paper serves to develop methods applicable to these more physical models.

We establish scaling limits of the growth processes in the small-particle scaling regime where the size of each particle converges to zero as the number of particles becomes large. 
We show that, when the power of harmonic measure is chosen within a particular range, 
the macroscopic shape of the cluster converges to a disk, 
but that as the power approaches the edge of this range the fluctuations approach a critical point, which is a limit of stability. 
This phase transition in fluctuations can be interpreted as the beginnings of a macroscopic phase transition, 
from disks to non-disks.

\subsection{Description of the model}
\label{DESM}
Our clusters will grow from the unit disk by the aggregation of many small particles.
Let
$$
K_0=\{z\in\C:|z|\le1\},\q D_0=\{z\in\C:|z|>1\}. 
$$
We fix a non-empty subset $P$ of $D_0$ and set
\[
K=K_0\cup P,\q D=D_0\sm P.
\]
We assume that $P$ is chosen so that $K$ is compact and simply connected.
Then we call $P$ a basic particle.

We will call a conformal map $F$, defined on $D_0$ and having values in $D_0$, a basic map if it is univalent
and satisfies
$$
F(\infty)=\infty,\q F'(\infty)\in(1,\infty).
$$
By the Riemann mapping theorem, there is a one-to-one correspondence between basic particles and basic maps given by
$$
P=\{z\in D_0:z\not\in F(D_0)\}.
$$
For convenience, we will assume throughout that $F$ has a continuous extension to the unit circle.
It is well understood geometrically when this holds.
The map $F$ has the form
\begin{equation}
\label{LAUR}
F(z)=e^c\left(z+\sum_{k=0}^\infty a_kz^{-k}\right)
\end{equation} 
for some $c>0$ and sequence $(a_k:k\ge0)$ in $\C$. The value $e^c$ is called the logarithmic capacity of the cluster $K$.
We define the capacity of the particle $P$ (or, interchangeably, of the map $F$) by
$$
\cp(P)=\log F'(\infty) = c.
$$

For the purpose of this introduction, we will assume that we have chosen a family of basic particles $(P^{(c)}:c\in(0,\infty))$, 
such that $\cp(P^{(c)})=c$. 
Figure \ref{fig:particles} shows four representative particles from some families we have in mind.
\begin{figure}[h!]
  \centering
    \includegraphics[width=0.85\textwidth]{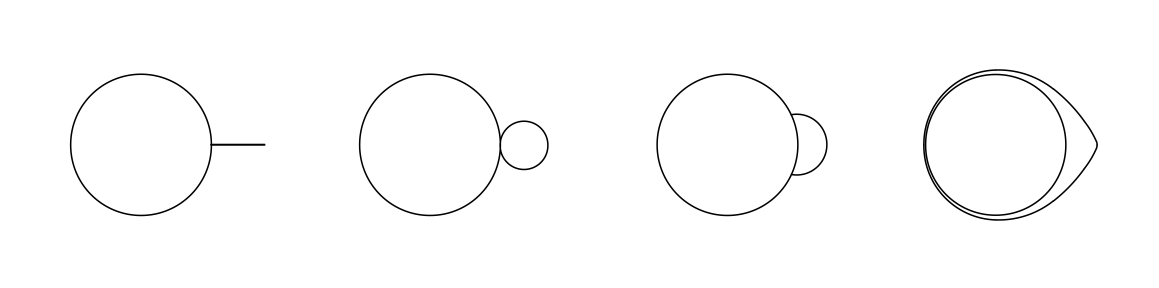}
    \caption{Examples of basic particles.} \label{fig:particles}
\end{figure}
Write $(F^{(c)}:c\in(0,\infty))$ for the family of associated basic maps.
Given a sequence of attachment angles $(\Theta_n:n\ge1)$ and capacities $(c_n:n\ge1)$, set 
$$
F_n(z)=e^{i\Th_n}F^{(c_n)}(e^{-i\Th_n}z).
$$
Define a process $(\Phi_n:n\ge0)$ of conformal maps on $D_0$ as follows:
set $\Phi_0(z)=z$ and for $n\ge1$ define recursively
\begin{equation}
\label{phidef}
\Phi_n=\Phi_{n-1}\circ F_n=F_1\circ\dots\circ F_n.
\end{equation}
Then $\Phi_n$ encodes a compact set $K_n\sse\C$, given by
$$
K_n=K_0\cup\{z\in D_0:z\not\in\Phi_n(D_0)\}
$$
and $\Phi_n$ is the unique conformal map $D_0\to D_n$ such that
$$
\Phi_n(\infty)=\infty,\q \Phi_n'(\infty)\in(1,\infty)
$$
where $D_n=\C\sm K_n$. It is straightforward to see that
$$
\cp(K_n)=\log\Phi_n'(\infty)=c_1+\dots+c_n
$$
and that $K_n$ may be written as the following disjoint union
$$
K_n=K_0\cup(e^{i\Th_1}P^{(c_1)})\cup\Phi_1(e^{i\Th_2}P^{(c_2)})\cup\dots\cup\Phi_{n-1}(e^{i\Th_n}P^{(c_n)}).
$$
We think of the compact set $K_n$ as a cluster, 
formed from the unit disk $K_0$ by the addition of $n$ particles.

By choosing the sequences $(\Theta_n:n\ge1)$ and $(c_n:n\ge1)$ in different ways, we can obtain a wide variety of growth processes.
In the aggregate Loewner evolution (ALE) model with parameters $\a\in\R$, $\eta\in\R$, $c\in(0,\infty)$ and $\sigma\in[0,\infty)$, 
which was introduced in \cite{STV} for slit particles, and abbreviated as ALE($\a,\eta$), we set
\begin{equation}
\label{hn}
h_n(\th)=\frac{|\Phi_{n-1}'(e^{\s+i\th})|^{-\eta}}{Z_{n}},\q
Z_n=\fint_0^{2\pi}|\Phi_{n-1}'(e^{\s+i\th})|^{-\eta}d\th
=\frac1{2\pi}\int_0^{2\pi}|\Phi_{n-1}'(e^{\s+i\th})|^{-\eta}d\th
\end{equation}
and we take 
$$
c_n=c|\Phi_{n-1}'(e^{\s+i\Th_n})|^{-\a}
$$
with $(\Th_n:n\ge1)$ a sequence of random variables whose distribution given by
$$
\PP(\Th_n\in B|\cF_{n-1})
=\fint_0^{2\pi}1_B(\th)h_n(\th)d\th
$$
where $\cF_n=\s(\Th_1,\dots,\Th_n)$.
In this paper, we will consider only the case where $\a=0$, 
which takes as data a single basic map $F=F^{(c)}$ and a choice of $\eta\in\R$ and $\s\in[0,\infty)$.
For simplicity, we refer to this model here as the ALE$(\eta)$ model with basic map $F$ and regularization parameter $\s$.

If, on the other hand, we were to take $\eta=\s=0$ and fix $\a\in[0,\infty)$, 
then we would obtain the HL($\a$) model considered by Hastings and Levitov \cite{HL}.
The parameters $\a$ and $\eta$ play a similar role in adjusting the `local growth rate of capacity' as a function of the current cluster shape. 
Indeed, in the subsequent paper \cite{NST2} 
 we show that, 
modulo a deterministic time-change and under the same restrictions on the parameter $\sigma$ as will be used in this paper, 
the scaling limit of ALE($\a,\eta$) depends primarily on the sum $\a+\eta$ provided that $\a+\eta\le1$. 
This means that ALE($\eta$) and regularized HL($\alpha$) have qualitatively similar behaviour when $\a=\eta$. 
Moreover, the range of the attachment densities considered in ALE($\eta$) corresponds exactly to those used to define the
dielectric-breakdown models, 
so the full family ALE($\a,\eta$) is of wider interest than HL($\a$) alone. 
See \cite{STV} for a comprehensive discussion of other models related to ALE. 

One of the challenges of studying HL($\a$) when $\a\neq0$ is that the capacity of the cluster $K_n$ is random and could be quite badly behaved. 
It is therefore a priori unclear how to tune parameters in order to obtain non-trivial scaling limits. 
One way in which ALE($\eta$) is simpler is that the capacity of the cluster $K_n$ is always $cn$, 
where $c=\log F'(\infty)$. 
Nevertheless, the models have much in common, 
and it has turned out that the framework developed here for ALE($\eta$) provides useful ideas for the analysis of HL($\a$). 
In this paper we will focus on the case where $\eta\in(-\infty,1]$. We will establish scaling limits and fluctuations for ALE($\eta$) in the small-particle regime, 
where simultaneously $c\to0$, $\s\to0$ and $n\to\infty$ with $n$ tuned so that $nc \to t$, for some fixed $t \in \mathbb{R}$, thereby giving clusters of macroscopic capacity. 

\subsection{Review of related work}
Much effort has been devoted to the analysis of lattice-based random growth models. These are models in which, at each step, a lattice site adjacent
to the current cluster is added, chosen according to a distribution determined by the current cluster.
Examples include the Eden model \cite{Eden}, diffusion limited aggregation (DLA) \cite{W+S} and the family of dielectric-breakdown models \cite{NPW}. 
Around 20 years ago, Carleson and Makarov \cite{C+M} and Hastings and Levitov \cite{HL} introduced an alternative approach in the planar case, 
which allows the formulation of a discrete particle model directly in the continuum 
by encoding clusters in terms of conformal maps, as described in the preceding subsection.
In \cite{C+M}, the authors obtained a growth estimate for a deterministic analogue of DLA which is formulated in terms of the Loewner equation. 
In \cite{HL}, 
the HL($\a$) model was studied numerically and experimental evidence was shown for a phase transition in behaviour at $\alpha=1$: 
when $\alpha<1$, clusters appeared to converge to disks; on the other hand, when $\alpha>1$, a turbulent growth regime emerged, in which clusters behaved randomly at large scale. 
Hastings and Levitov argued that HL(1) is a candidate for an off-lattice version of the Eden model, and HL(2) corresponds to DLA. 
Establishing the existence of this phase transition rigorously is one of the main open problems in this area.

In \cite{STV}, Sola, Turner and Viklund showed the existence of a phase transition in the ALE($\eta$) model.
They showed that, for $\eta>1$, if particles are taken to be slits, 
and the regularisation parameter $\sigma$ is sufficiently small then, in the small-particle limit, 
the clusters themselves grow from the unit disk by the emergence of a radial slit, at a random angle.

This behaviour is qualitatively different to the known behaviour of ALE(0), that is to say HL(0), in the same scaling regime.
In \cite{NT2}, 
Norris and Turner showed that the HL(0) clusters converge to disks with internal branching structure given by the Brownian web. 
More recently, 
Silvestri \cite{Sil} analysed the fluctuations in HL(0) and showed that these converge to a log-correlated fractional Gaussian field. 
Several other papers consider modifications of the HL(0) model \cite{JST12,JST15,RZ}.

In this paper, we approach the question of the phase transition in ALE$(\eta)$ at $\eta=1$ from the opposite direction to that in \cite{STV}  by showing convergence to a disk for ALE($\eta$) for all $\eta\le1$, provided that $\s$ does not converge to zero too fast.
Further, we prove convergence of the associated fluctuations to an explicit limit, which depends on $\eta$, 
and which would exhibit unstable behaviour if one took $\eta>1$.
Our results apply in a different regime to that considered in \cite{STV}. 
We require that the regularization parameter $\s\gg c^{1/2}$ (and sometimes more), which enables us to show that, 
for each $\eta\le 1$, the disk limit and the fluctuations hold universally for a wide class of particle shapes. 
By contrast, in \cite{STV} the parameter $\s\ll c$ and the results rely heavily on the slit particle being non-differentiable at its tip. 

\subsection{Statement of results}
Our main results will be proved under the technical assumption \eqref{partcond} below, 
which we will show in Appendix \ref{sec:particle} to be satisfied for small particles of any given shape.
This assumption expresses that the basic particle $P$ is concentrated near the point $1$ on the unit circle 
in a certain controlled way.
Let $F$ be a basic map of capacity $c\in(0,1]$, in the sense of Subsection \ref{DESM}, 
that is to say, a univalent conformal map from $\{|z|>1\}$ into $\{|z|>1\}$ such that $F(z)/z\to e^c$ as $z\to\infty$.
We say that $F$ has regularity $\L\in[0,\infty)$ if, for all $|z|>1$,
\begin{equation}
\label{partcond}
\left|\log\left(\frac{F(z)}z\right)-c\frac{z+1}{z-1}\right|\le\frac{\L c^{3/2}|z|}{|z-1|(|z|-1)}.
\end{equation} 
Here and below we choose the branch of the logarithm so that $\log(F(z)/z)$ is continuous on $\{|z|>1\}$ with limit $c$ at $\infty$.
Our results will concern the limit $c\to0$ with $\L$ fixed, but are otherwise universal in the choice of particle.
We will show that, for $\eta\in(-\infty,1]$, in this limit, 
provided the regularisation parameter $\s$ does not converge to 0 too fast, 
the cluster $K_n$ converges to a disk of radius $e^{cn}$, and the fluctuations, suitably rescaled,
converge to the solution of a certain stochastic partial differential equation.

\begin{theorem}
\label{UFA}
Let $\eta\in(-\infty,1]$, $\L\in[0,\infty)$ and $\ve\in(0,1/2)$ be given.
Let $(\Phi_n:n\ge0)$ be an ALE$(\eta)$ process with basic map $F$ and regularization parameter $\s$. 
Assume that $F$ has capacity $c$ and regularity $\L$, and that $e^\s\ge1+c^{1/2-\ve}$.
For all $\eta\in(-\infty,1)$, $m\in\N$ and $T\in[0,\infty)$,
there is a constant $C=C(\eta,\ve,\L,m,T)<\infty$ with the following property.
There is an event $\O_1$ of probability exceeding $1-c^m$ on which,
for all $n\le T/c$ and all $|z|\ge1+c^{1/2-\ve}$,
\begin{equation} \label{eq:erroretal1}
|\Phi_n(z)-e^{cn}z|
\le C\bigg(c^{1/2-\ve}+\frac{c^{1-\ve}}{(e^\s-1)^2}\bigg).
\end{equation}
Moreover, in the case where $\eta=1$, provided $\ve\in(0,1/5)$ and $e^\s\ge1+c^{1/5-\ve}$,
there is also a constant $C=C(\ve,\L,m,T)<\infty$ with the following property.
There is an event $\O_1$ of probability exceeding $1-c^m$ on which,
for all $n\le T/c$ and all $|z|\ge1+c^{1/5-\ve}$,
$$
|\Phi_n(z)-e^{cn}z|
\le C \left( c^{1/2-\ve}\bigg(\frac{|z|}{|z|-1}\bigg)^{1/2} 
+ \frac{c^{1-\ve}}{(e^\s -1)^3}\right).
$$
\end{theorem}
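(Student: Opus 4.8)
The plan is to work throughout in the logarithmic coordinate $G_n(z)=\log\big(\Phi_n(z)/(e^{cn}z)\big)$, which is single-valued and analytic on $\{|z|>1\}$ (since $\Phi_n$ maps into $\{|z|>1\}$) and vanishes at $\infty$. As $\Phi_n(z)-e^{cn}z=e^{cn}z(e^{G_n(z)}-1)$, it suffices to bound $|G_n|$ uniformly on the relevant annulus and then pass back via $|\Phi_n(z)-e^{cn}z|\le 2e^{cT}|z|\,|G_n(z)|$ on the bootstrap event. From $\Phi_n=\Phi_{n-1}\circ F_n$ and the regularity assumption \eqref{partcond}, applied to $F_n(z)=e^{i\Th_n}F^{(c)}(e^{-i\Th_n}z)$, one gets the one-step recursion
\[
G_n(z)=G_{n-1}(F_n(z))+\frac{2ce^{i\Th_n}}{z-e^{i\Th_n}}+E_n(z),\qquad |E_n(z)|\le \frac{\Lambda c^{3/2}|z|}{|z-e^{i\Th_n}|\,(|z|-1)}.
\]
Expanding $G_{n-1}$ about the point $e^cz$, and comparing $G_{n-1}(e^cz)$ with $G_{n-1}(z)$, isolates a transport/damping term $(e^c-1)zG_{n-1}'(z)$, a mode-coupling term $G_{n-1}'(z)(F_n(z)-e^cz)$ and quadratic remainders.

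Next I would split the source $\tfrac{2ce^{i\Th_n}}{z-e^{i\Th_n}}$ into its conditional expectation given $\cF_{n-1}$ and the complementary martingale increment, which is bounded by $4c$. The key input here is a harmonic-measure stability estimate: $h_n$ is a smooth functional of $\Phi_{n-1}'$ restricted to $\{|z|=e^\s\}$, and on that circle $\Phi_{n-1}'$ is controlled by $G_{n-1}$ and, via Cauchy estimates, $|G_{n-1}'(e^{\s+i\th})|\lesssim \|G_{n-1}\|/(e^\s-1)$; hence $h_n$ is uniform up to an error of order $\|G_{n-1}\|/(e^\s-1)$, with low Fourier coefficients $\widehat h_n(-k)\approx\tfrac{\eta}{2}(k-1)e^{-\s k}g_k^{(n-1)}$ to leading order. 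Feeding this back in, the diagonal dynamics of the Fourier coefficients $g_k^{(n)}$ of $G_n$ is $g_k^{(n)}\approx(1-\lambda_k)g_k^{(n-1)}+(\text{martingale increment})+(\text{errors})$ with damping rate $\lambda_k=(e^c-1)k-\eta c(k-1)e^{-\s k}$; the decisive observation is that $\eta\le1$ forces $\lambda_k\ge e^c-1>0$ for \emph{every} $k$, so every Fourier mode is linearly stable — this is exactly why the disk is an attractor precisely when $\eta\le1$.

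The estimate is then closed by a bootstrap. Fix a threshold $\delta$, let $\tau$ be the first time $\|G_n\|$ on the annulus exceeds $\delta$, and run all estimates for $n\le\tau$. Iterating the recursion writes $g_k^{(N)}$ as a geometrically damped sum of martingale increments, plus damped sums of the residual drift, the particle errors $E_n$, the mode-coupling terms and the quadratic remainders. The martingale part is handled mode by mode by an Azuma--Hoeffding inequality (its predictable quadratic variation is $O(c^2/\lambda_k)$), union-bounding over the $\mathrm{poly}(1/c)$ relevant pairs $(k,N)$; this produces the event $\O_1$ of probability $\ge1-c^m$ and contributes the $c^{1/2-\ve}$ term, while the particle errors contribute $O(\Lambda Tc^{1/2})$. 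The residual drift, mode-coupling and quadratic terms give a feedback that is at least quadratic in $\|G_{n-1}\|$ with coefficients carrying powers of $(e^\s-1)^{-1}$ (one per Cauchy estimate on the regularization circle), so solving a bootstrap inequality of the form $\delta\lesssim c^{1/2-\ve}+\delta^2/(e^\s-1)^2$ for $\delta$ yields the $c^{1-\ve}/(e^\s-1)^2$ term. Finally $G_N(z)=\sum_k g_k^{(N)}z^{-k}$ is reassembled using the orthogonality of the martingale increments across modes, so the per-mode bounds sum with only a logarithmic loss (absorbed into $\ve$) rather than a factor $(|z|-1)^{-1}$, and one passes back to $\Phi_N$.

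I expect the case $\eta=1$ to be the main obstacle. Although the modes remain linearly stable ($\lambda_k\ge e^c-1$), the spectral gap is now only of the minimal order $c$ over a whole plateau of modes (roughly $k\lesssim(e^\s-1)^{-1/2}$), whereas for $\eta<1$ it grows linearly in $k$ beyond the first few modes; so the nonlinear corrections that are harmlessly absorbed by the growing gap when $\eta<1$ must, at $\eta=1$, be identified and estimated precisely. This requires expanding the attachment density to second order, which brings in $G_{n-1}''$ on $\{|z|=e^\s\}$ — an extra $(e^\s-1)^{-1}$, hence the feedback coefficient $(e^\s-1)^{-3}$ and the term $c^{1-\ve}/(e^\s-1)^3$ — and, together with summing over the enlarged set of weakly-damped modes, forces the stronger hypotheses $\ve\in(0,1/5)$, $e^\s\ge1+c^{1/5-\ve}$ and the weight $(|z|/(|z|-1))^{1/2}$ in the error bound. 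Carrying this out while keeping the bootstrap closed uniformly into the boundary layer $|z|\sim1+c^{1/5-\ve}$ is the delicate part of the argument.
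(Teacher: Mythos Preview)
Your high-level strategy matches the paper's: introduce a stopping time on the size of the fluctuation, linearize around the disk, split into a martingale and a drift, identify the linear evolution with multiplier $p(k)\approx 1-c(1+(1-\eta)k)$ (so every mode is damped when $\eta\le1$), and close a bootstrap. The paper carries this out with $\tilde\Phi_n=e^{-cn}\Phi_n-z$ rather than the logarithm $G_n$, and crucially with the \emph{differentiated} quantity $\Psi_n=D\tilde\Phi_n$ first, since the stopping-time constraint is on $\|\tilde\Phi_n'\|_{\infty,e^\sigma}$; but these are inessential differences.

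The genuine gap is in your martingale estimate. Mode-by-mode Azuma--Hoeffding gives, with high probability, $|g_k^{(n)}|\lesssim\sqrt{c\log(1/c)/(1+(1-\eta)k)}$, but you then have only the triangle inequality to reassemble: $|G_n(z)|\le\sum_k|g_k^{(n)}|r^{-k}\lesssim\sqrt{c\log(1/c)}\,(r-1)^{-1/2}$, which at $r=1+c^{1/2-\ve}$ is of order $c^{1/4+\ve/2}$, not $c^{1/2-\ve}$. The ``orthogonality across modes'' you invoke is an $L^2$ (or $L^p$) statement and does not survive a per-mode sup-norm bound; converting an $L^2$ bound to a sup bound costs another $(r-1)^{-1/2}$. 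The paper avoids this loss by working throughout in the norms $\|f\|_{p,r}=\big(\fint|f(re^{i\th})|^p\,d\th\big)^{1/p}$, applying Burkholder's $L^p$ inequality pointwise in $z$ and then integrating, and controlling the evolution operator $P_0^{n-j}$ uniformly in these norms via the Marcinkiewicz multiplier theorem (using only that $p_0(k)$ is nonnegative and monotone). Passing from $\|\cdot\|_{p,r}$ to $\|\cdot\|_{\infty,r}$ then costs only $(r/(r-1))^{1/p}$, which for $p\gtrsim 1/\ve$ is absorbed into $c^{-\ve}$. This $L^p$ machinery is what produces the $(|z|-1)$-free bound for $\eta<1$; your scheme as written cannot reach it.

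Two secondary points. First, the paper's bootstrap is not a quadratic inequality in $\delta$ but a linear recursion $\|\Psi_n\|_{p,r}\le\bar\delta(r)\sup_{j<n}\|\Psi_j\|_{p,\rho}+\delta(r)$ with $\rho=(r+1)/2$ and small coefficient $\bar\delta(r)$, iterated finitely many times from the crude distortion bound $\|\Psi_n\|_{p,r}\le(r-1)^{-1}$; the quadratic $\delta_0^2$ term sits only inside $\delta(r)$ with the fixed threshold $\delta_0$. Second, your diagnosis of the $\eta=1$ case is off: the extra $(e^\sigma-1)^{-1}$ and the factor $(|z|/(|z|-1))^{1/2}$ do not come from expanding $h_n$ to second order (the paper never does this), but from the loss of the push-out $r_{n-j}=re^{c(1-\eta)(n-j)}$ in the time-summations --- for $\eta<1$ an integral comparison turns $\sum_j c/(r_{n-j}-1)^a$ into a logarithm, whereas for $\eta=1$ it is a full $(r-1)^{-a}$, and this propagates through the bounds on both the martingale and remainder terms.
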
 

We remark that Theorem \ref{UFA} can be recast in terms of a regularized particle $P^{(\s)}$ given by
$$
P^{(\s)}=\{z\in D_0:e^\s z\not\in F(e^\s z)\}.
$$
Note that $P^{(\s)}$ also has capacity $c$ and is associated to the
conformal map
$$
F^{(\s)}(z)=e^{-\s}F(e^\s z).
$$
Let $(\Phi_n^{(\s)}:n\ge0)$ be an ALE process with basic map $F^{(\s)}$ and regularization parameter $0$.
Then
$$
\Phi_n^{(\s)}(z)=e^{-\s}\Phi_n(e^\s z)
$$
for an ALE process $(\Phi_n:n\ge0)$ with basic map $F$ and regularization parameter $\s$.
Hence, 
if we replace $\Phi_n$ by $\Phi_n^{(\s)}$ in Theorem \ref{UFA}, then 
under the same restrictions on $\s$,
the same estimates are valid
but now for all $|z|\ge1$ and without regularization in the density of attachment angles.

The simulations on the left side of Figure \ref{alesimulations}
illustrate the conjectured phase transition in macroscopic shape from disks to non-disks at $\eta=1$. 
The simulations on the right show the sensitivity of the fluctuations of the level lines 
$\th\mapsto\Phi_n(re^{i\th})$ in ALE(0) to taking $r-1\approx c^{1/2}$ versus $r-1\gg c^{1/2}$. 
This provides evidence that the speed at which $\s\to0$ as $c\to0$ in ALE$(\eta)$ significantly affects cluster behaviour. 
\begin{figure}[p!]
\begin{centering}
    \subfigure[$\eta=0.5$]
      {\includegraphics[width=0.31 \textwidth]{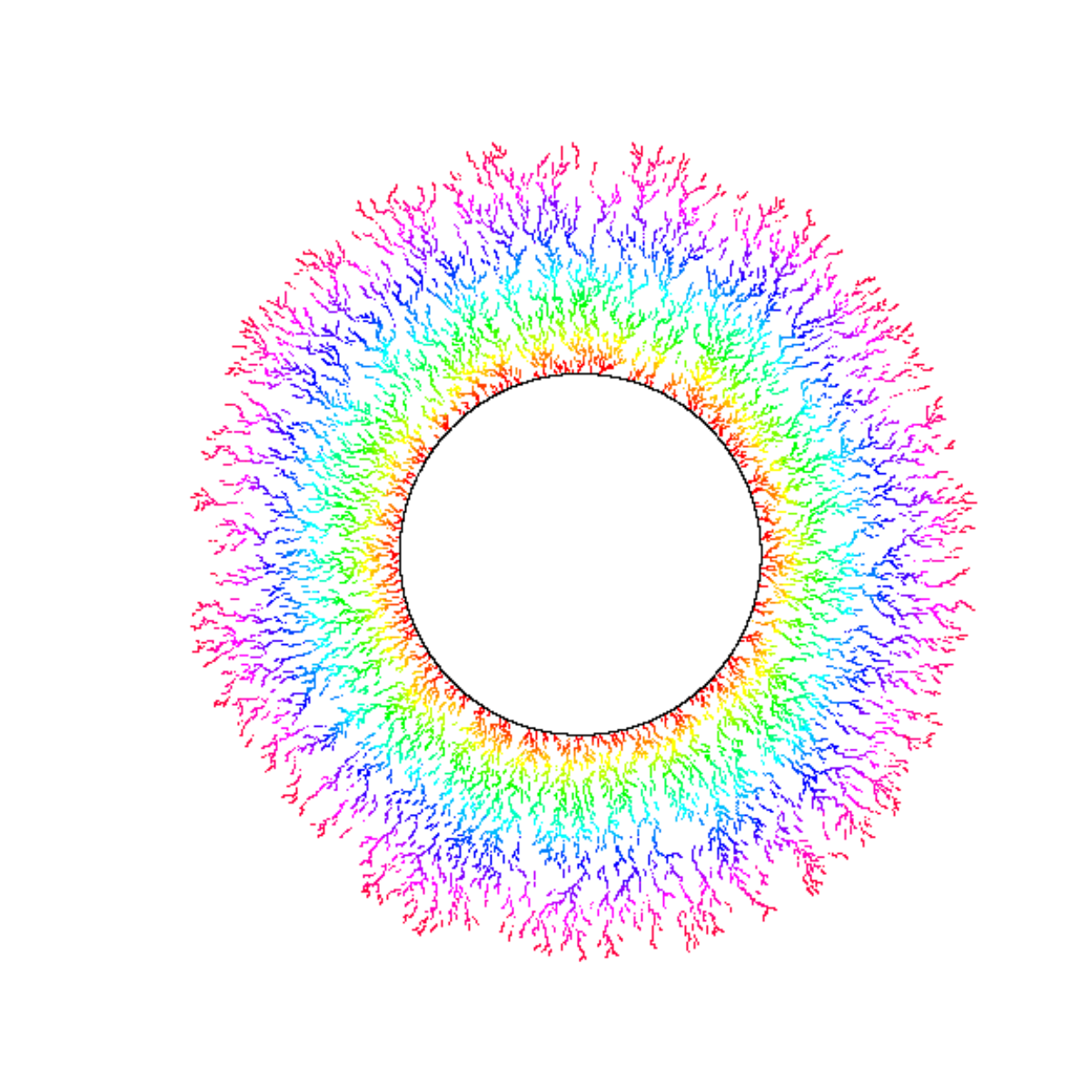}}
    \hspace{3cm}
        \subfigure[$r=1$]
      {\includegraphics[width=0.31 \textwidth]{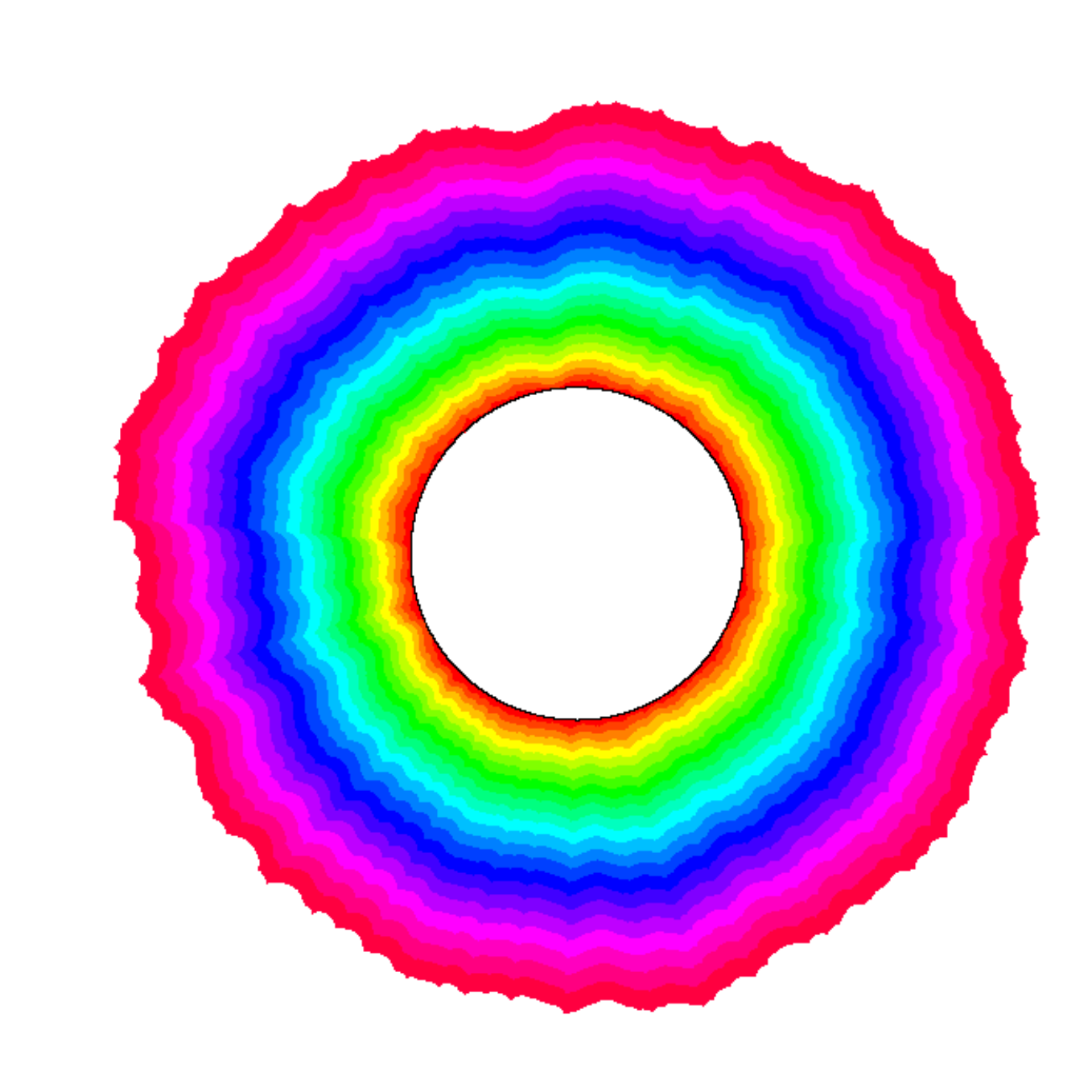}}
    \\
    \subfigure[$\eta=1$]
      {\includegraphics[width=0.31 \textwidth]{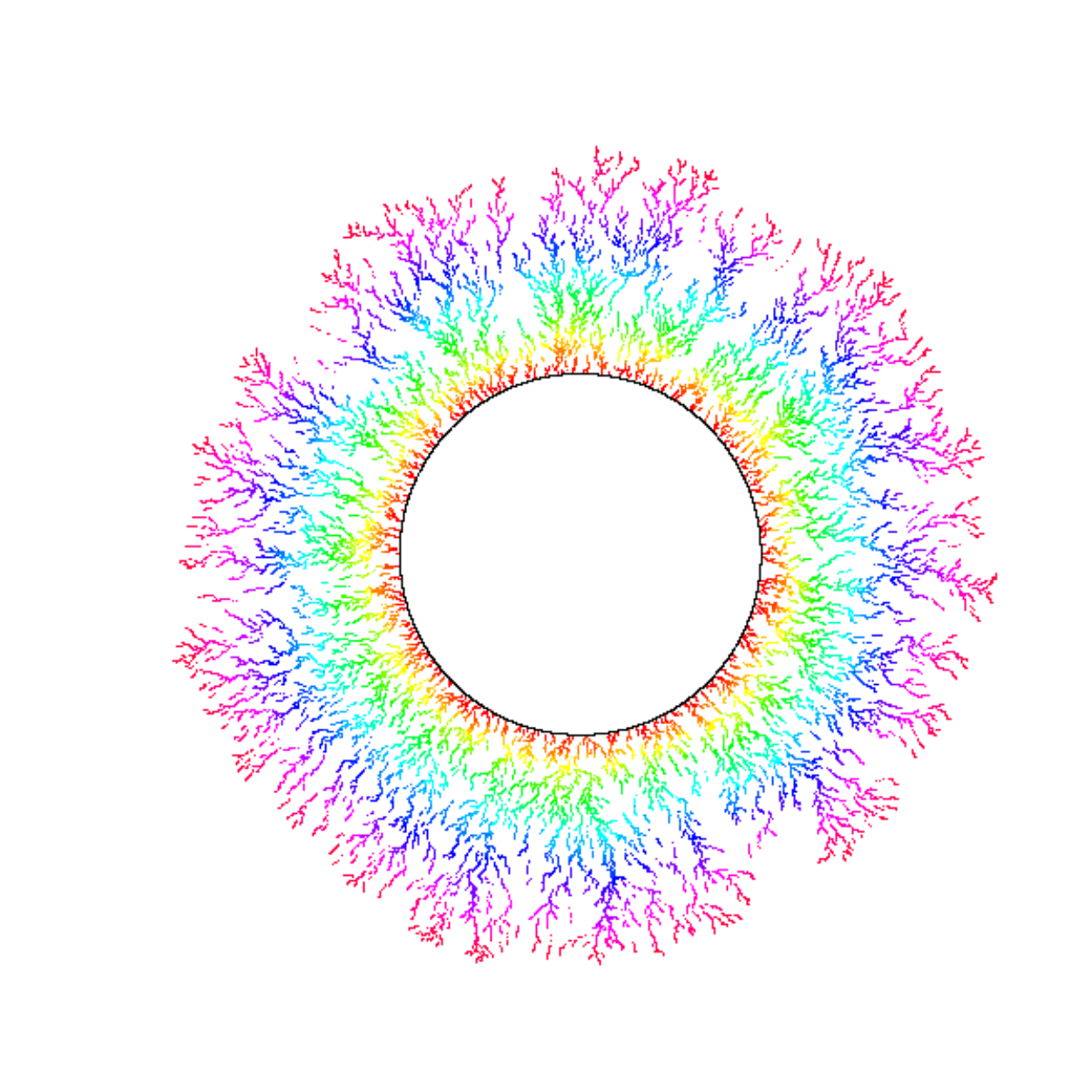}}
		\hspace{3cm}
	\subfigure[$r=1+c^{1/2}$]
      {\includegraphics[width=0.31 \textwidth]{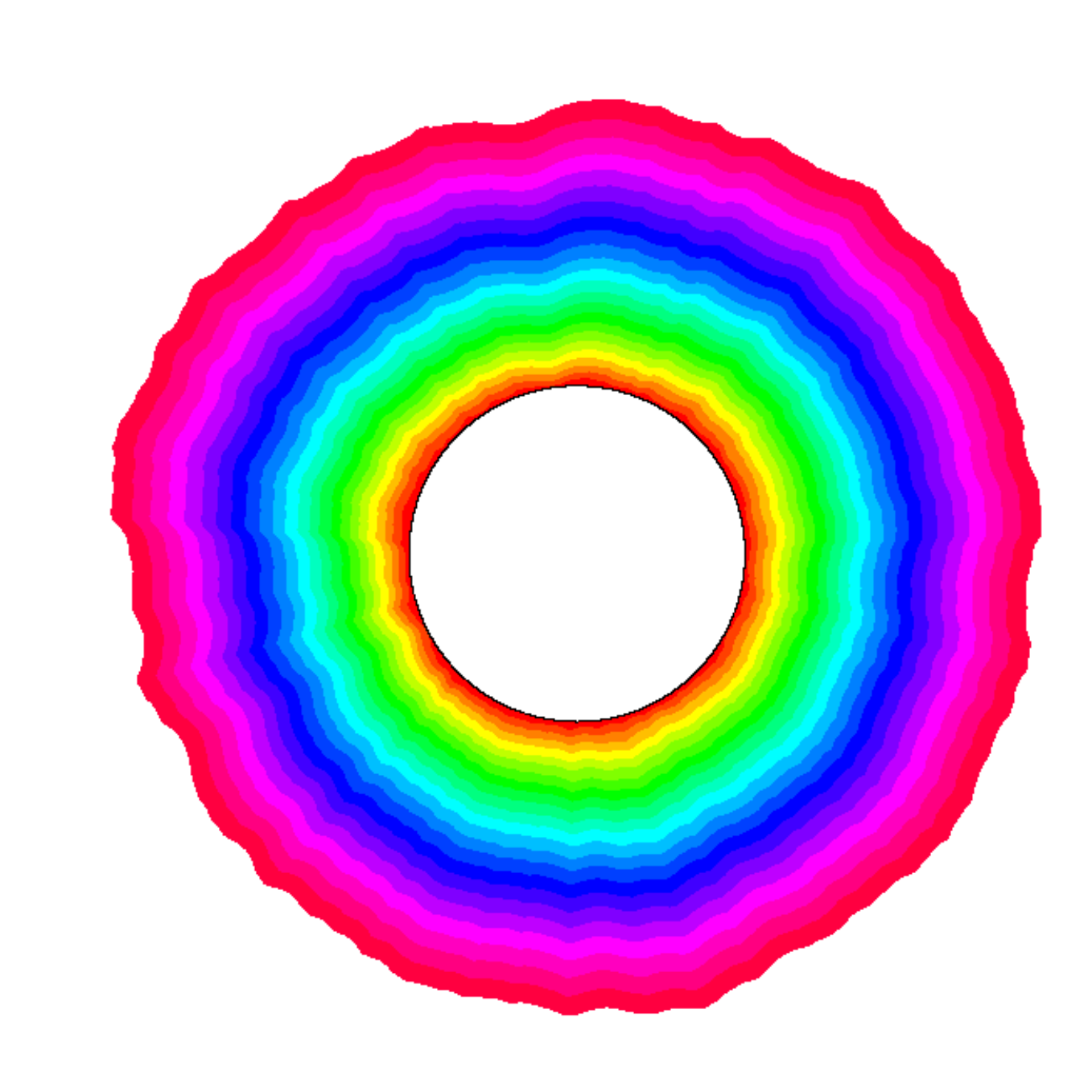}}
    \\
    \subfigure[$\eta=1.5$]
      {\includegraphics[width=0.31 \textwidth]{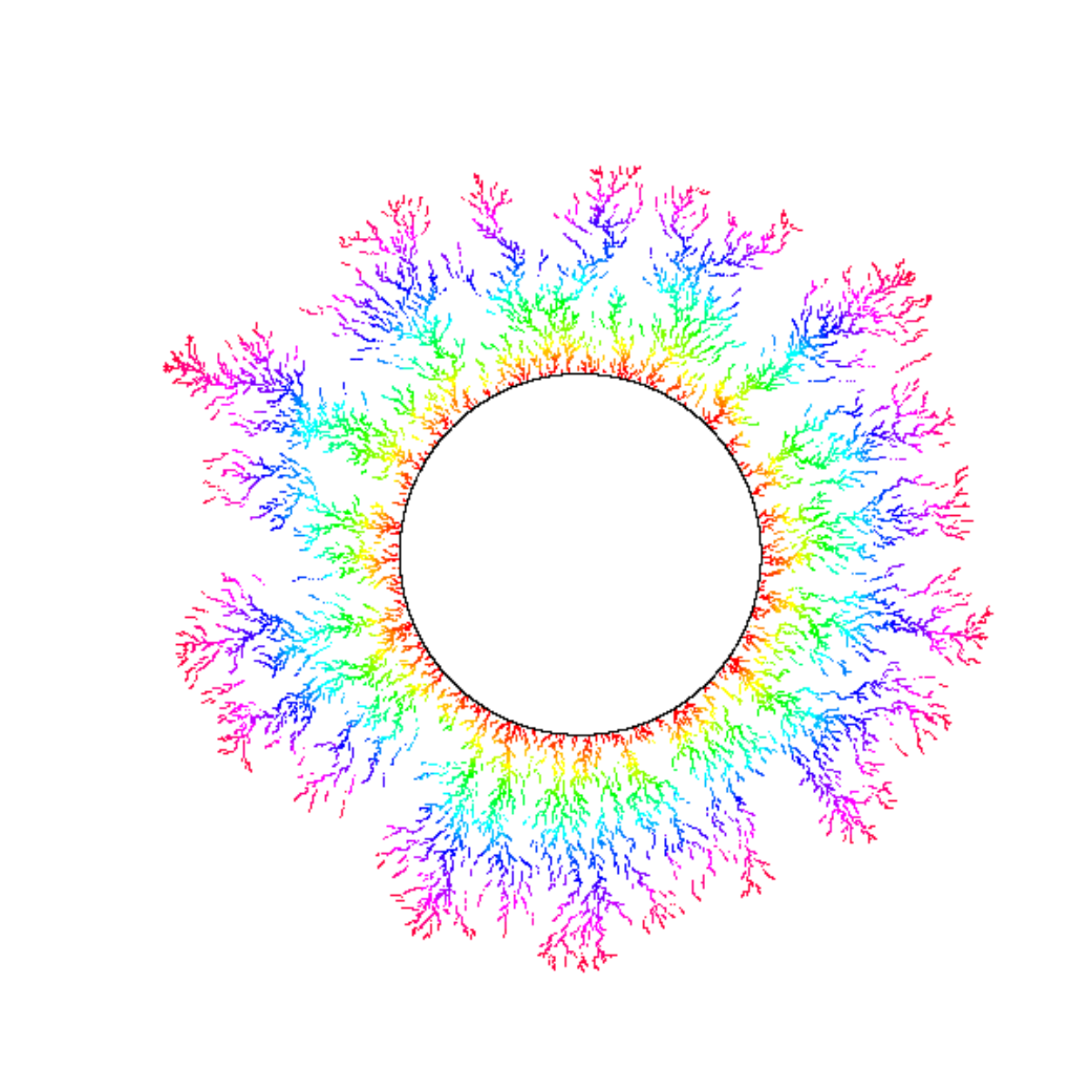}}
    \hspace{3cm}
    \subfigure[$r=1+c^{1/4}$]
      {\includegraphics[width=0.31 \textwidth]{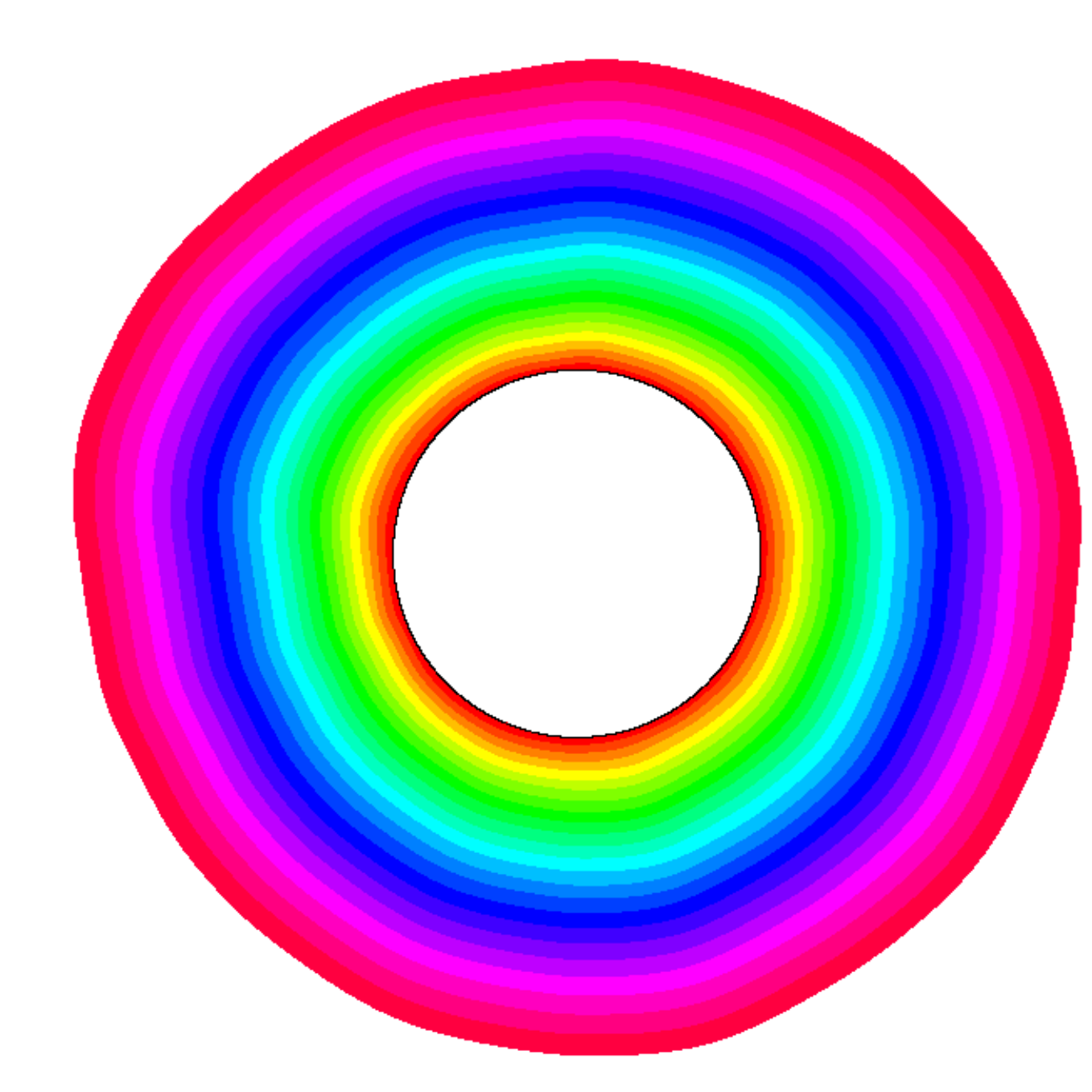}}
      
      \end{centering}
  \caption{\textsl{Left: ALE$(\eta)$ clusters with slit particles where $c=10^{-4}$, $\s=0.02$, and $n=8,000$. 
  Right: Level lines of the form $\Phi_n(r e^{i\theta})$ in an ALE(0) cluster with spread out particles (Figure 1, far right) for $c=10^{-4}$ and $n=10,000$. Colour variation is used to denote time evolution.}}
\label{alesimulations}
\end{figure}

We also establish the following characterization of the limiting fluctuations, 
which shows in particular that they are universal within the class of particles considered.

\begin{theorem} 
\label{th:fluctuations_intro}
Let $\eta\in(-\infty,1]$, $\L\in[0,\infty)$ and $\ve\in(0,1/6)$ be given.
Let $(\Phi_n:n\ge0)$ be an ALE$(\eta)$ process with basic map $F$ and regularization parameter $\s$. 
Assume that $F$ has capacity $c$ and regularity $\L$.
Assume further that 
$$
\s\ge
\begin{cases}
c^{1/4-\ve},&\text{ if $\eta\in(-\infty,1)$},\\
c^{1/6-\ve},&\text{ if $\eta=1$}.
\end{cases}
$$
Set $n(t)=\lfloor t/c\rfloor$.
Then, in the limit $c\to0$ with $\s\to0$, uniformly in $F$,
\[ 
(e^{-cn(t)}\Phi_{n(t)}(z)-z)/\sqrt c\to\cF(t,z)
\]
in distribution on $D([0,\infty),\cH)$, 
where $\cH$ is the set of analytic functions on $\{|z|>1\}$ vanishing at $\infty$, 
equipped with the metric of uniform convergence on compacts,
and where $\cF$ is given by the following stochastic PDE driven by the analytic extension $\xi$ in $D_0$ of space-time white noise on the unit circle, 
\begin{equation} 
\label{PDEfluct}
d\cF(t,z)=(1-\eta)z\cF'(t,z)dt-\cF(t,z)dt+\sqrt2d\xi(t,z).
\end{equation}
\end{theorem}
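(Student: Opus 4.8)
The plan is to obtain an approximate semimartingale decomposition for the rescaled fluctuation field, prove tightness in $D([0,\infty),\cH)$, and identify every subsequential limit with the unique-in-law solution of \eqref{PDEfluct}; universality in $F$ is then automatic, since the dependence on the regularity parameter $\L$ will enter only through error terms that vanish. Write $\psi_n(z)=e^{-cn}\Phi_n(z)$, $G_n(z)=(\psi_n(z)-z)/\sqrt c$, and $\cF^{(c)}(t,z)=G_{n(t)}(z)$, so that the assertion is $\cF^{(c)}\to\cF$ in distribution. From $\Phi_n=\Phi_{n-1}\circ F_n$, hence $\psi_n=e^{-c}\,\psi_{n-1}\circ F_n$, the hypothesis \eqref{partcond} applied to $F_n(z)=e^{i\Th_n}F(e^{-i\Th_n}z)$ gives $F_n(z)=z\exp(c\,\tfrac{z+e^{i\Th_n}}{z-e^{i\Th_n}})$ up to an error of size $\L c^{3/2}|z|/(|z-e^{i\Th_n}|(|z|-1))$. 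Taylor-expanding $\psi_{n-1}$ about $z$ (its derivatives controlled by Cauchy's formula and the disk estimate of Theorem \ref{UFA}, which applies under the present hypotheses on $\s$), and expanding the attachment density $h_n(\th)=|\Phi_{n-1}'(e^{\s+i\th})|^{-\eta}/Z_n=1-\eta\sqrt c\,\re G_{n-1}'(e^{\s+i\th})+O(c\|G_{n-1}'\|^2)$, one obtains, for $|z|\ge1+\rho$,
\[
G_n(z)-G_{n-1}(z)=c\big((1-\eta)zG_{n-1}'(z)-G_{n-1}(z)\big)+\sqrt c\,\big(\k(z,\Th_n)-\E[\k(z,\Th_n)\mid\cF_{n-1}]\big)+\cE_n(z),
\]
where $\k(z,\th)=2ze^{i\th}/(z-e^{i\th})$, $\cF_{n-1}=\s(\Th_1,\dots,\Th_{n-1})$, and $\cE_n$ is a remainder. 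The drift has three sources: the $-G_{n-1}$ term from the prefactor $e^{-c}$; the $+zG_{n-1}'$ term from the first-order composition term $c\,\k(z,\Th_n)G_{n-1}'(z)$ after conditioning on $\cF_{n-1}$; and, crucially, the $-\eta zG_{n-1}'$ term from the $\eta\sqrt c\,\re G_{n-1}'$ correction to $h_n$, via the residue identity $\fint\k(z,\th)\,\re G_{n-1}'(e^{\s+i\th})\,d\th\to zG_{n-1}'(z)$, which holds in the limit $\s\to0$. For fixed $\s>0$ the left side of this identity differs from $zG_{n-1}'(z)$ by a term of order $\s\|G_{n-1}''\|$, which is why the SPDE coefficient emerges only in the small-regularization limit.

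The crux will be to control $\cE_n$ so that $\sum_{n\le T/c}\E\|\cE_n\|$ and $\sum_{n\le T/c}\E\|\cE_n-\E[\cE_n\mid\cF_{n-1}]\|^2$ both tend to $0$. The neglected terms --- of the form $c^{3/2}\|G_{n-1}'\|^2$, $c^{5/2}\|G_{n-1}''\|$, $c\s\|G_{n-1}''\|$, their martingale analogues, and the $\L c^{3/2}$ error from \eqref{partcond} --- are, by Cauchy's estimates near the unit circle, governed by negative powers of $\s$, so summability over the $\asymp c^{-1}$ steps forces the lower bound on $\s$. The argument is a bootstrap: Theorem \ref{UFA} supplies crude a priori control of $G_{n-1}$, hence of $G_{n-1}'$ and $G_{n-1}''$, down to distance $\asymp\s$ from the circle; with this, $\cE_n$ is small enough to run a Gronwall/Doob estimate on the recursion, yielding the sharp moment bounds $\sup_{n\le T/c}\E[\sup_{|z|\ge1+\rho}|G_n(z)|^{2p}]\le C(\rho,p,T)$ and their counterparts for $G_n'$; with these, $\cE_n$ is genuinely negligible. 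The case $\eta=1$ is harder because the $+zG_{n-1}'$ and $-\eta zG_{n-1}'$ contributions to the drift cancel at leading order, so the decomposition must be carried one order further and the subleading terms --- involving the $\s$-dependent corrections to the residue identity and the higher Laurent coefficients of $F$ --- estimated, which is what forces the stronger requirement $\s\ge c^{1/6-\ve}$.

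Given the decomposition and the moment bounds, tightness of $(\cF^{(c)})$ in $D([0,\infty),\cH)$ reduces, since $\cH$ carries the topology of local uniform convergence and the $G_n$ are analytic, to tightness of the scalar processes $t\mapsto\cF^{(c)}(t,z)$ for $z$ in a countable dense subset of $\{|z|>1\}$, together with equicontinuity in $z$; the latter is supplied by analyticity and the moment bounds through Cauchy's integral formula, and the former follows from the Aldous criterion applied to the drift-plus-martingale decomposition, the accumulated predictable quadratic variation of the martingale part over a time interval of length $\d$ being $O(\d)$, the drift having $L^2$ total variation $O(\d)$ over such an interval, and the jumps, of size $O(\sqrt c)$, vanishing.

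Finally, any subsequential limit $\cF$ is continuous, and one passes to the limit in the decomposition: the drift converges to $\int_0^t((1-\eta)z\cF'(s,z)-\cF(s,z))\,ds$, and the martingale part $M^{(c)}(t,z)=\sqrt c\sum_{k\le n(t)}(\k(z,\Th_k)-\E[\k(z,\Th_k)\mid\cF_{k-1}])$ has predictable quadratic covariation $\sum_{k\le n(t)}c\,\cov(\k(z,\Th_k),\overline{\k(w,\Th_k)}\mid\cF_{k-1})$, which, because the $\Th_k$ are asymptotically uniform on the circle and $\k(z,\cdot)$ is precisely the kernel defining the analytic extension $\xi$ of white noise, converges in probability to $2t$ times the covariance kernel of $\xi$, while the covariation of $M^{(c)}(t,z)$ with $M^{(c)}(t,w)$ (no conjugate) converges to $0$; a Lindeberg condition holds since the increments are $O(\sqrt c)$. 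Thus every subsequential limit solves the martingale problem for \eqref{PDEfluct} with a continuous martingale noise of that deterministic covariation, and such a martingale is Gaussian and hence equal in law to $\sqrt2\,\xi$. Since \eqref{PDEfluct} is a linear SPDE --- diagonalizing into independent complex Ornstein--Uhlenbeck processes in the Laurent coefficients --- its law is unique, so the whole family $\cF^{(c)}$ converges to $\cF$, and since $\L$ entered only through $\cE_n$ the limit is independent of $F$. I expect the principal obstacle to be precisely the uniform control of $\cE_n$ near the unit circle: expanding the conformal composition and the harmonic-measure density simultaneously to the correct order, with errors summable over $\asymp c^{-1}$ steps and valid down to distance $\asymp\s$ from the circle, the bootstrap off Theorem \ref{UFA} being delicate and the leading-order drift cancellation making the case $\eta=1$ harder still.
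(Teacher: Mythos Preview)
Your approach --- approximate semimartingale decomposition, Aldous tightness, identification via the martingale problem --- is a recognizable alternative to the paper's route (which solves the linear recursion explicitly and then proves a Laurent-coefficient martingale CLT), but there is a real gap at the moment-bound step. The paper does not attempt a Gronwall on $G_n-G_{n-1}=c\big((1-\eta)zG_{n-1}'-G_{n-1}\big)+\cdots$; instead it observes (Section~\ref{sec:evol}) that the \emph{exact} one-step linear map is $\tilde\Phi_n=P\tilde\Phi_{n-1}+M_n+W_n+R_n$ with $P\Phi(z)=e^{-c}\Phi(e^cz)-c\eta z\Phi'(e^\s z)$, a Fourier multiplier with symbols $p(k)=e^{-c(k+1)}+c\eta k e^{-\s(k+1)}\in[0,1]$, so that $P^n$ is uniformly bounded in $\|\cdot\|_{p,r}$ by the Marcinkiewicz multiplier theorem. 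Solving the recursion as $\tilde\Phi_n=\sum_jP^{n-j}(M_j+W_j+R_j)$ then yields the $O(\sqrt c)$ moment bounds (Proposition~\ref{DFAun}) with no loss. By contrast, your linearized one-step operator $I+c\big((1-\eta)D-I\big)$ has symbol $1-c(1+(1-\eta)k)$, which for $k\gtrsim 1/c$ lies outside $[-1,1]$, so its powers blow up and a direct Gronwall in any fixed $\|\cdot\|_{\infty,1+\rho}$ or $\|\cdot\|_{p,r}$ norm does not close. Theorem~\ref{UFA} by itself gives only $|G_n|\le Cc^{-\ve}$, not $O(1)$, hence it does not supply compact containment either. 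To make your scheme rigorous you would have to work with the exact $P$ (i.e.\ the paper's multiplier device) or argue coefficient-by-coefficient, which is precisely what the paper does in Section~\ref{sec:fluctuations}.

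Your diagnosis of the $\eta=1$ case is also off. The cancellation of the $zG'$ terms in the drift makes the linearized evolution \emph{simpler}, not harder; the paper does not expand the composition to higher order when $\eta=1$. The actual difficulty (see the proof of Proposition~\ref{DFA}) is that for $\eta<1$ the multiplier carries a factor $e^{-c(1-\eta)k}$, which in the solved recursion amounts to evaluating at pushed-out radii $r_{n-j}=re^{c(1-\eta)(n-j)}$ and makes sums such as $\sum_j(r_{n-j}-1)^{-3}$ bounded uniformly in $n$. At $\eta=1$ there is no push-out, those sums are larger by a power of $(r-1)^{-1}$, and this --- not any need for a higher-order expansion --- is what forces the stronger constraint on $\s$. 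Once the $O(\sqrt c)$ bounds are in hand, the paper discards $\tilde\cW$ and $\tilde\cR$ (Lemma~\ref{le:neglect}), proves convergence of finite-dimensional distributions of the Laurent coefficients via a martingale CLT (Theorem~\ref{th:Laurent}), and lifts to $D_\cH$ by a uniform tail estimate (Lemma~\ref{le:Kn}); your martingale-problem identification would be a valid substitute only at that last stage.
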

The space $\cH$ and the meaning of this PDE are discussed in more detail in Section \ref{sec:fluctuations}.
For $\eta=0$ we recover the fluctuation result in \cite{Sil}. 
The solution to the above stochastic PDE is an Ornstein--Uhlenbeck process in $\cH$.
This process converges to equilibrium as $t\to\infty$. 
When $\eta<1$, the equilibrium distribution is given by the analytic extension in $D_0$ of a log-correlated Gaussian field defined on the unit circle. 
In the case $\eta=0$, this is known as the augmented Gaussian Free Field. 
When $\eta=1$, the equilibrium distribution is the analytic extension of complex white noise on the unit circle. 
The equation \eqref{PDEfluct} can be interpreted as a family of independent equations for the Laurent coefficients of $\cF(t,.)$, given in \eqref{OU}.
These equations may be considered also for $\eta>1$ 
but now the equation for the $k$th Laurent coefficient shows exponential growth of solutions at rate $(\eta-1)k$,
so there is no solution to \eqref{PDEfluct} in $\cH$, 
indicating a destabilization of dynamics as $\eta$ passes through $1$.

Although we have stated our theorems above for $\eta\in(-\infty,1]$, in many of our arguments we restrict to the case $\eta\in[0,1]$. 
The proofs are largely similar when $\eta<0$ except in the way that we decompose the operator in Section \ref{sec:evol}. 
We remark on the correct decomposition in the case $\eta<0$ at the relevant point.

\subsection{Remarks on context and scope of results}
The process of conformal maps $(\Phi_n:n\ge0)$ is Markov and takes values in an infinite-dimensional vector space.
In the limit considered, where $c\to0$, the jumps of this process become small, while we speed up the discrete time-scale
to obtain a non-trivial limiting drift.
So we are in the domain of fluid limits for Markov processes.
The analysis of such limits, and of the renormalized fluctuations around them, is well understood in finite dimensions.
However, while the formal lines of this analysis transfer readily to infinite dimensions, its detailed implementation is not so clear,
not least because it is necessary to choose a norm, which should be well adapted to the dynamics, 
and the limiting drift will in general be a non-linear and unbounded operator.

In the case at hand, there are a number of special features which are important to the analysis.
First, while the limiting dynamics is not in equilibrium, it is an explicit steady state, which allows us to handle
convergence of the Markov process in terms of linearizations around this steady state:
we find that the difference $\Phi_n(z)-e^{cn}z$ may usefully be expressed by an interpolation in time, in which each term describes the
error introduced by a single added particle.
Second, the map $\Phi_n$ is determined by its restriction to the unit circle $(\Phi_n(e^{i\th}):\th\in[0,2\pi))$ and the action of each jump, besides 
being small, also becomes localized in $\th$ in the limit $c\to0$.
This is one of the features contributing to the explicit form found for the limiting fluctuations.
Third, we have at our disposal, not only the usual tools of stochastic analysis, but also a range of tools from complex analysis,
including distortion estimates, and $L^p$-estimates for multiplier operators, which turn out to mesh well with $L^p$-martingale inequalities.

We have tried to optimise, as far as our present techniques allow, the constraints in our results on the regularization parameter $\s$.
In the case $\eta<1$, we establish the disk limit for $\s\gg c^{1/2}$.
Indeed, for $\eta<1$, in the limit considered, we show that the derivative of the fluctuations at radius $e^\s$, 
which controls the scale of $h_n(\th)-1$, is at most of order $c^{1/2}/(e^\s-1)$.
Therefore, to leading order, the distribution of each attachment angle is approximately uniform and the bulk dynamics of our process resemble that of HL$(0)$. As seen in Proposition \ref{PEST}, the scale of individual particles is $c^{1/2}$, so for $\s\sim c^{1/2}$ the fluctuations of $e^{-c}f'(e^{\s + i\th})$ around 1 are scale-invariant. With that choice of $\sigma$ we would expect to see macroscopic variations of $h_n(\th)$, so the attachment distributions would no longer be well approximated by the uniform distribution. We therefore believe our constraint on $\s$ is close to optimal within this regime and it remains a challenging open problem to allow $\s\sim c^{1/2}$. When $\eta = 1$, on the other hand, we show that the derivative of the fluctuations at radius $e^\s$ is at most of order $c^{1/2}/(e^\s-1)^{3/2}$. The break-down in the uniform approximation may therefore well happen for larger $\s$ than $\s\sim c^{1/2}$ and the form of the fluctuations is suggestive of $\s \sim c^{1/3}$. Although we need a stronger regularization for the fluctuation result (cf.~Theorem \ref{th:fluctuations_intro}), we find that the fluctuations develop variations on all spatial scales, 
so the modification of dynamics from HL$(0)$ to ALE$(\eta)$, even with the averaging enforced by our choice of regularization,
results in a feedback which affects the limiting evolution, and which identifies the case $\eta=1$ as critical.


\subsection{Organisation of the paper}
The structure of the paper is as follows. 
In Section \ref{sec:HL0}, we give a simplified proof of convergence to a disk in the case $\eta=0$, corresponding to HL(0). 
This is followed by an overview of the proof when $\eta\neq0$. 
In Section \ref{sec:estimates}, we decompose the increment $\Phi_n(z)-\Phi_{n-1}(e^cz)$ as a sum of martingale difference and
drift terms, which we expand to leading order in $c$ with error estimates.
In Section \ref{sec:evol} we obtain the evolution equation and decomposition for the fluctuations. 
The remainder of the paper analyses this equation. 
Specifically, in Section \ref{sec:estdecom} we use the estimates from Section \ref{sec:estimates} to obtain bounds on the terms arising in the
decomposition of the differentiated fluctuations.
These bounds are then used in Section \ref{sec:diskconv} to obtain our disk limit Theorem \ref{UFA}.
Finally the fluctuation limit Theorem \ref{th:fluctuations_intro} is derived in Section \ref{sec:fluctuations}.

Some necessary but technical estimates are deferred to appendices. In Appendix \ref{sec:particle} we show that our main assumption \eqref{partcond} is satisfied for small particles of any given shape. Appendix \ref{sec:preliminaries} contains the estimates for multiplier operators used in the paper. In Appendices \ref{sec:comp_est} and \ref{sec:proof_est} we derive the specific estimates on ALE($\eta$) used in our main results.


\def\j{
\bb
\subsection{Notes on literature}
Lawler `Conformally Invariant Processes in the Plane' has a similar estimate in the unit disk but for the mapping-out function. 
I have not seen how to transfer this to $F$.
Lawler seems to have used Duren `Univalent Functions'.

\subsection{Details of capacity estimates}
Write $F_\d$ for the map associated to $P_\d$, as defined in the proof of Proposition \ref{PEST} and write
$$
\log(F_\d(z)/z)=u_\d(z)+iv_\d(z).
$$
The conformal map 
$$
f(z)=i(z-1)/(z+1)
$$ 
takes $D_0$ to the upper half-plane $\H$, 
mapping $P_\d$ to the half-disk $S_\d=\{z\in\H:|z|\le b\}$, where
$$
b=\sin\th_\d/(1+\cos\th_\d).
$$
Then the map 
$$
g(z)=\frac{z+b^2/z}{1-b^2}
$$
takes $\H\sm S_\d$ to $\H$ fixing $i$ and $\infty$.
Hence we have
$$
F_\d=f^{-1}\circ g^{-1}\circ f.
$$
Now if $e^{ia}=f^{-1}(g(b))$ then $a$ is given by
$$
a=2\int_0^{2b/(1-b^2)}\frac{dx}{1+x^2}.
$$
and $u_\d(e^{i\th})=0$ whenever $|\th|\in[a,\pi]$.
Moreover, by some straightforward estimation, we can show that 
$$
\fint_0^{2\pi}|v_\d(e^{i\th})|d\th\le C\d^2(1+\log(1/\d))
$$
for all $\d\in(0,1]$ for some constant $C<\infty$.

Here are the details.
On mapping $D_0$ to the upper half-plane $\H$ by $f(z)=i(z-1)/(z+1)$, 
the set $P_\d\cap D_0$ maps to the half-disk $S$ centred at $0$ of radius $b=\sin\th_\d/(1+\cos\th_\d)$.
Since $\sin\th_\d\le\d$ and $\cos\th_\d\ge1/2$, we have $b\le2\d/3$ (and $b\sim\d/2$ as $\d\to0$).
We map $\H\sm S$ to $\H$ by $\phi(z)=(z+b^2/z)/(1-b^2)$.
We have scaled $\phi$ so that $\phi(i)=i$.
Then
$$
F_\d=f^{-1}\circ\phi^{-1}\circ f.
$$
Set $e^{ia}=F_\d^{-1}(e^{i\th_\d})=f^{-1}(\phi(b))$.  
Then
$$
a=2\int_0^{2b/(1-b^2)}\frac{dx}{1+x^2}\le\frac{4b}{1-b^2}\le C\d
$$
(and $a\sim2\d$ as $\d\to0$).
For $\th\in[a,\pi]$ and $x\in[b,\infty)$ such that $x=\phi^{-1}(f(e^{-i\th}))$, we have 
\begin{align*}
|v_\d(e^{i\th})|
&=|\arg(F_\d(e^{i\th}))-\arg(e^{i\th})|
\le|F_\d(e^{i\th})-e^{i\th}|\\
&=|f^{-1}(x)-f^{-1}(\phi(x))|
\le|(f^{-1})'(x)||\phi(x)-x|
=\frac2{x^2+1}\frac{b^2(x+1/x)}{1-b^2}.
\end{align*}
On the other hand, we have $|v_\d(e^{i\th})|\le a$ for all $\th\in[0,a]$.
Hence
$$
\fint_0^{2\pi}|v_\d(e^{i\th})|d\th
\le\frac{4b^2}{\pi(1-b^2)}\int_b^\infty\frac{(x+1/x)dx}{(1+x^2)^2}+\frac{a^2}\pi
\le C\d^2\left(1+\log(1/\d)\right).
$$

\subsection{Stronger estimates on $\a$}
Let $P$ be as in Proposition \ref{PEST} and assume that $\d\le1$.
Then $e^{i\pi}$ is not a limit point of $P$ so $F(e^{i(\pi+\a)})=e^{i\pi}$ for some $\a\in\R$.
Then $u(e^{i(\pi+\a)})=0$ and we can and do choose $\a$ so that $\a+v(e^{i(\pi+\a)})=0$.
Set
$$
\th^+=\sup\{\th\le\pi+\a:u(e^{i\th})>0\},\q
\th^-=\inf\{\th\ge\pi+\a:u(e^{i\th})>0\}-2\pi.
$$
Then $\th^-\le\th^+$.
Define
$$
\tilde u(z)=u(e^{i\a}z),\q
\tilde v(z)=\a+v(e^{i\a}z),\q
\tilde F(z)=z\exp(u(z)+iv(z)).
$$
Then $\tilde F$ maps $D_0$ to $D_0\sm P$ fixing $\infty$ and $e^{i\pi}$, and $\tilde v(e^{i\pi})=0$.
Note that also $v_\d(e^{i\pi})=0$ by symmetry.
Since $P\sse P_\d$, harmonic measure arguments now show that
$$
|\th^\pm-\a|\le a
$$
and
$$
|\tilde v(e^{i\th})|\le|v_\d(e^{i\th})|
$$
whenever $|\th|\in[a,\pi]$.
On the other hand, for $|\th|\le a$, we have $\tilde F(e^{i\th})\in P_\d$, so $|\tilde v(e^{i\th})|\le $.
But
$$
\fint_0^{2\pi}v(e^{i\th})d\th=0
$$
so
$$
|\a|
=\left|\fint_0^{2\pi}\tilde v(e^{i\th})d\th\right|
\le\dots
\le C\d^2\left(1+\log\left(\frac1\d\right)\right).
$$
Hence, there is a constant $C<\infty$ such that $|\th^\pm|\le C\d$.
In fact, for all $\ve>0$ there is a $\d(\ve)>0$ such that $|\th^\pm|\le(2+\ve)\d$ whenever $\d\in(0,\d(\ve)]$.

\subsection{Rotation and smoothing of particles}
Consider a basic particle $P$ and associated map $F$ satisfying \eqref{PREC}.
For each $\th\in(0,2\pi)$, we can rotate to obtain a new particle $P_\th=e^{i\th}P$ whose associated map is given by
$$
F_\th(z)=e^{i\th}F(e^{-i\th}z).
$$
Then
$$
\left|\log\left(\frac{F_\th(z)}z\right)-c\frac{\g z+e^{i\th}}{\g z-e^{i\th}}\right|\le\frac{Ac^{3/2}}{|z-e^{i\th}|(|z|-1)}.
$$
It is straightforward to check that this is not compatible with \eqref{PREC}, which may be considered as identifying
$1$ as the unique point on the unit circle at which $P$ is centred.
On the other hand, given $r\in(1,\infty)$, we can obtain a new particle $P_r$, which also has capacity $c$, by setting
$$
P_r=\{z\in D_0:rz\not\in F(rD_0)\}.
$$
The map $F_r$ associated with $P_r$ is given by
$$
F_r(z)=r^{-1}F(rz)
$$
and satisfies \eqref{PREC} $\g_r=r\g$ and $\L_r=\L/r$.

\subsection{Range of $\gamma$}
The range $\g\in[1,\infty)$ in \eqref{PREC} is natural, as the following discussion shows.
Consider the possible validity of \eqref{PREC} for a particle $P$ and some general $\g\in\C$.
By rotating the particle as above, we reduce to the case $\g\in[0,\infty)$.
In the limit of interest to us later, we will have $\L\sqrt c<1$ eventually.
But then the case $\g\in[0,1)$ cannot arise. 
For if $\g=1-x$ with $x\in(0,1]$ then we can take $z=1+x$ in \eqref{partcond} and use $|F(z)|\ge|z|$ to see that $\log|F(z)/z|\ge0$ and so
$$
c\frac{2-x^2}{x^2}=-c\frac{\g z+1}{\g z-1}\le\frac{\L c^{3/2}}{|z-1|(|z|-1)}<\frac c{x^2}
$$
which is impossible.

\subsection{Notes for the estimates on capacity in Proposition \ref{PEST}}
For the lower bound, we will consider the case where $1+\d\in P$.
For $\d>0$, set
$$
q_\d
=\PP_\infty(B\text{ hits }(1,1+\d]\text{ before leaving }D_0)
$$
where $B$ is a complex Brownian motion.
Then, by a calculation of conformal maps,
$$
q_\d
=2\int_0^{\d/(2\sqrt{1+\d})}\frac{dx}{\pi(1+x^2)}.
$$
Hence $q_\d\le\d/\pi$ and $q_\d\sim\d/\pi$ as $\d\to0$, and
for $\d\le1$ we have the lower bound
$$
q_\d\ge\frac8{9\sqrt2}\frac\d\pi.
$$
By Beurling's projection theorem, this provides a lower bound of $\d/C$ on the harmonic measure
from $\infty$ carried on the boundary outside $\{|z|\ge1+\d/2\}$.
Then
$$
c=\fint_0^{2\pi}u(e^{i\th})d\th\ge\log(1+\d/2)\d/C
$$
giving the claimed lower bound. 

For the upper bound, we will consider the case where $|z-1|\le\d$ for all $z\in P$ for some $\d\in(0,1]$.
Then $\cp(P)\le\cp(\{z\in D_0:|z-1|\le\d\})\le C\d^2$,
where the final inequality comes from explicit calculation of the map $F_\d$.
In fact this shows that 
$$
\cp(P_\d)=\log\left(\frac{1+b^2}{1-b^2}\right)
$$
where $b\le2\d/3$ and so $\cp(P_\d)\le8\d^2/5$
(and $b\sim\d/2$ and so $\cp(P_\d)\sim\d^2/2$ as $\d\to0$).

\subsection{Notes on $v$ and $v_\d$}
By a harmonic measure argument
$$
\tilde v(e^{i\th})\le\tilde v(e^{i\th'})\le0.
$$ 
Since $P_\d$ is invariant under reflection in the real axis, and $F_\d$ is uniquely determined by $P_\d$, 
we have $F_\d(e^{i\pi})=e^{i\pi}$ and, with obvious notation, $\a_\d=0$.
Consider the particle $P_*=\tilde F^{-1}(P_\d\sm P)$ and let $\tilde F_*$ be the unique conformal map $D_0\to D_0\sm P_*$
such that $\tilde F_*(\infty)=\infty$ and $\tilde F_*(e^{i\pi})=e^{i\pi}$.
Then
$$
F_\d=\tilde F\circ\tilde F_*.
$$
Determine $\th_\d^+\in[\th^+,\pi]$ by $\tilde F(e^{i\th_\d^+})=e^{i\th_\d}$.
Given $\th\in[\th_\d^+,\pi]$, there is a unique $\th_*\in[\th,\pi]$ such that $\tilde F_*(e^{i\th_*})=e^{i\th}$.
Then $\th+\tilde v(e^{i\th})=\th_*+v_\d(e^{i\th_*})$.
Hence
$$
v_\d(e^{i\th})\le v_\d(e^{i\th_*})\le\tilde v(e^{i\th})\le 0
$$
and so 
$$
|\tilde v(e^{i\th})-v_\d(e^{i\th})|\le|v_\d(e^{i\th})|\le\th_\d.
$$
On the other hand, for $\th\in[0,\th_\d^+]$, we have $\tilde F(e^{i\th})\in P_\d$, so 
$$
|\tilde v(e^{i\th})-v_\d(e^{i\th})|\le2\th_\d.
$$
Analogous estimates apply for $\th\in[\pi,2\pi]$.
Hence we see that $|\tilde v(e^{i\th})-v_\d(e^{i\th})|\le2\th_\d$ for all $\th\in[0,2\pi]$.
Then
$$
|\a|
=\left|\fint_0^{2\pi}\tilde v(e^{i\th})d\th\right|
=\left|\fint_0^{2\pi}(\tilde v-v_\d)(e^{i\th})d\th\right|
\le\fint_0^{2\pi}|\tilde v-v_0|(e^{i\th})d\th
\le2\th_\d.
$$
If just going for this estimate, can simplify above.

Estimates on $v_0$. 
The same monotonicity argument used above work if we replace $P_0$ by the 
larger set 
$$
\tilde P_0=\{z\in D_0:|z-x_0|\le\rho_0\}
$$
where $\rho_0$ and $x_0$ are determined so that the convex boundary of $\tilde P_0$ cuts the unit circle orthogonally and
at the same two points $e^{\pm i\th_0}$ as does the convex boundary of $P_0$.
Then $x_0>1$ and $\rho_0>\rho$, so $P_0\sse\tilde P_0$.
From now on we work with this new set and drop the tildes.
We will obtain estimates on $v_0$ (new) by using the fact that $F_0=H^{-1}\circ S\circ H$, 
where $H$ maps $\{|z|>1\}$ to the upper half-plane $\H$, given by
$$
H(z)=i\frac{z-1}{z+1}
$$
and $S$ maps $\H$ to $\H\sm\{|z|>r\}=H(D\sm P_0)$ fixing $i$ and $\infty$, given by
$$
S^{-1}(z)=\frac{z+r^2/z}{1-r^2},\q S(z)=(1-r^2)z-\frac{r^2}{(1-r^2)z}+O(r^4).
$$
Note that 
$$
r=|H(e^{i\th})|=\frac\rho{|e^{i\th}+1|}\le\rho
$$
and 
$$
H'(z)=\frac{2i}{(z+1)^2}
$$
so $|H'(z)|\ge1/2$ when $|z|=1$.
Determine $\th_1\in[0,\pi]$ by $F_0(e^{i\th_1})=e^{i\th_0}$.
Then $\th_1=$
and
$$
\frac1{2\pi}\int_{\th_1}^\pi|v_0(e^{i\th})|d\th
\le2\int_{-\infty}^{2r/(1-r^2)}\frac{r^2}{x(1-r^2)}\frac1{\pi(1+x^2)}dx
\le Ar^2\le A\rho^2.
$$
\eb
}\jj

\section{HL(0) and overview of the proof of Theorem \ref{UFA}}\label{sec:HL0}
In this section we give a quick argument for the scaling limit of HL(0) (which is the same as ALE(0)), 
where the attachment angles $(\Theta_n:n\ge1)$ are independent and uniformly distributed. 
Then we discuss the structure of the proof of Theorem \ref{UFA}, some aspects of which follow the argument used for HL(0).  

For a measurable function $f$ on $\{|z|>1\}$, for $p\in[1,\infty)$ and $r>1$, we will write
\begin{equation} \label{prnorm}
\|f\|_{p,r}=\left(\fint_0^{2\pi}|f(re^{i\th})|^pd\th\right)^{1/p},\q
\|f\|_{\infty,r}=\sup_{\th\in[0,2\pi)}|f(re^{i\th})|.
\end{equation}
In the case where $f$ is analytic and is bounded at $\infty$, we have, for $\rho\in(1,r)$,
\begin{equation}\label{PINF}
\|f\|_{p,r}\le\|f\|_{p,\rho},\q 
\|f\|_{\infty,r}\le\left(\frac\rho{r-\rho}\right)^{1/p}\|f\|_{p,\rho}.
\end{equation}
The notation $\|\cdot\|_p$ will be reserved for the $L^p (\PP)$-norm on the probability space. 

\subsection{Disk limit for $\eta=0$} \label{sec:HL0proof}
We now show that HL($0$) converges to a disk in the small-particle limit. 
A weaker form of this result was shown in \cite{NT2} by fluid limit estimates on the Markov processes $(\Phi_n^{-1}(z):n\ge0)$.
Here, we will use a new method, based on estimating directly the conformal maps $\Phi_n$.
This both gives a simpler argument and leads to a stronger result.

\begin{theorem} 
\label{th:HL0}
Let $(\Phi_n:n\ge0)$ be an HL$(0)$ process with basic map $F$.
Assume that $F$ has capacity $c\in(0,1]$ and regularity $\L\in[0,\infty)$.
Then, for all $p\in[2,\infty)$, there is a constant $C=C(\L,p)<\infty$ such that, for all $r>1$ and $n\ge0$, we have
\[
\Big\|\sup_{|z|\ge r}|\Phi_n(z)-e^{cn}z|\Big\|_p 
\le Ce^{cn}\sqrt c\left(\frac r{r-1}\right)^{1+1/p}. 
\]
\end{theorem}
We remark that by taking $p$ large enough it is possible to deduce that, for all $\ve\in(0,1/2)$ and $T\ge0$, we have
\[ 
\sup_{n\le T/c,\,|z|\ge 1+c^{1/2-\ve}}|\Phi_n(z)-e^{cn}z|\to0 
\]
in probability as $c\to0$. 
As this is spelled out more generally in Section \ref{sec:highprob}, we omit the details at this stage. 
Indeed, on applying Theorem \ref{UFA} to HL(0), say with $\s=1$, we obtain the stronger estimate
\[ 
\sup_{n\le T/c,\,|z|\ge 1+c^{1/2-\ve}}|\Phi_n(z)-e^{cn}z|\leq C c^{1/2-\ve} 
\]
with high probability as $c\to0$.
This improvement can be traced to the iterative argument used in the proof of Proposition \ref{DFA}.

\begin{proof}[Proof of Theorem \ref{th:HL0}] 
It will suffice to consider the case where $r\ge1+\sqrt c$.
Set
\begin{equation}
\label{eq:Delta_def}
\Delta_n(\th,z)=\Phi_{n-1}(e^{i\th}F(e^{-i\th}z))-\Phi_{n-1}(e^cz),\q \Delta_n(z)=\Delta_n(\Th_n,z).
\end{equation}
Note that $\Phi_{n-1}(e^cz)$ is the map we would obtain after $n$ steps if we substituted $F_n(z)$ by $e^cz$ in \eqref{phidef}. As we aim to show that $\Phi_n(z)$ is close to $e^{cn}z$, $\Delta_n(z)$ can be understood as the error due to the $n$th particle.
We can write $\Phi_n$ as a telescoping sum
\begin{equation}
\label{HL0decomp}
\Phi_n(z)-e^{cn}z=\sum_{j=1}^n\Delta_j(e^{c(n-j)}z). 
\end{equation}
The functions $F$ and $\Phi_{j-1}$ are analytic in $\{|z|>1\}$ and $F(z)/z\to e^c$ as $z\to\infty$, so the function
$$
w\mapsto(\Phi_{j-1}(wF(z/w))-\Phi_{j-1}(e^cz))/w
$$
is analytic in $\{0<|w|<|z|\}$ and extends analytically to $\{|w|<|z|\}$.
Hence, almost surely, by Cauchy's theorem,
$$
\E(\Delta_j(z)|\cF_{j-1})
=\fint_0^{2\pi}\Delta_j(\th,z)d\th
=\frac1{2\pi i}\int_{|w|=1}(\Phi_{j-1}(wF(z/w))-\Phi_{j-1}(e^cz))\frac{dw}w=0.
$$
There is a constant $C=C(\L)<\infty$ such that, for all $|z|>1+\sqrt c/2$,
$$
|F(z)-e^cz|\le Cc\frac{|z|}{|z-1|}.
$$
Since $\Phi_{j-1}$ is univalent on $\{|z|>1\}$ and $\Phi_{j-1}(z)/z\to e^{c(j-1)}$ as $z\to\infty$,
by a standard distortion estimate,
for all $|z|=r>1$,
\begin{equation*}\label{DISTN}
|\Phi_{j-1}'(z)|\le e^{c(j-1)}\frac r{r-1}.
\end{equation*}
Hence, for $|z|=r>1+\sqrt c/2$, we have
$$
|\Delta_j(\th,z)|
\le Cce^{cj}\frac{r^2}{(r-1)|e^{-i\th}z-1|}
$$
and so
\begin{align*}
\E(|\Delta_j(z)|^2|\cF_{j-1})
&=\fint_0^{2\pi}|\Delta_j(\th,z)|^2d\th\\
&\le Cc^2e^{2cj}\left(\frac r{r-1}\right)^2\fint_0^{2\pi}\frac{r^2d\th}{|e^{-i\th}z-1|^2}
\le Cc^2e^{2cj}\left(\frac r{r-1}\right)^3.
\end{align*}
Burkholder's inequality (see Section \ref{sec:martingales}) applies to the sum of martingale differences \eqref{HL0decomp}, to give that for all $p\in[2,\infty)$ there is a constant $C=C(\Lambda , p)<\infty$, such that
$$
\|\Phi_n(z)-e^{cn}z\|_p^2
\le C\sum_{j=1}^n\|\E(|\Delta_j(e^{c(n-j)}z)|^2|\cF_{j-1})\|_{p/2}
+ C c^2 e^{2cn} \left( \frac r{r-1} \right)^4 . 
$$
Hence, for $|z|\ge1+\sqrt c/2$,
$$
\|\Phi_n(z)-e^{cn}z\|_p^2
\le Cc^2\sum_{j=1}^ne^{2cj}\left(\frac{e^{c(n-j)}r}{e^{c(n-j)}r-1}\right)^3
+ C c^2 e^{2cn} \left( \frac r{r-1} \right)^4 \le Cce^{2cn}\left(\frac r{r-1}\right)^2,
$$
where we used an integral comparison for the last inequality.
Set 
\[ 
\tilde{\Phi}_n(z)=e^{-cn}\Phi_n(z)-z.
\] 
and write $\rho=(r+1)/2$.
Then, for $|z|\ge1+\sqrt c$, we have $\rho\ge1+\sqrt c/2$, so
\[\begin{split} 
\Big\|\sup_{|z|\ge r}|\tilde\Phi_n(z)|\Big\|_p^p 
&=\E\left(\|\tilde\Phi_n\|_{\infty,r}^p\right) 
\le C\left(\frac r{r-1}\right)\E\left(\|\tilde\Phi_n\|_{p,\rho}^p\right)\\ 
&=C\left(\frac r{r-1}\right)\fint_0^{2\pi}\|\tilde\Phi_n(\rho e^{i\th})\|_p^pd\th 
\le Cc^{p/2}\left(\frac r{r-1}\right)^{p+1}
\end{split}\]
and the claimed estimate follows.
\end{proof}

\subsection{Overview of the proof of Theorem \ref{UFA}}
We now discuss how the above strategy can be adapted to the case where $\eta\in(-\infty,1]$. 
Write 
\[ 
\Phi_n(z)-e^{cn}z=\sum_{j=1}^n\Delta_j(e^{c(n-j)}z) 
\]
with $\Delta_j(z)=\Phi_j(z)-\Phi_{j-1}(e^cz)$ as in \eqref{eq:Delta_def}. 
We split $\Delta_j(z)$ as the sum of a martingale difference
\begin{equation}
\label{eq:B_def}
B_j(z)=\Delta_j(z)-\E(\Delta_j(z)|\cF_{j-1})
\end{equation}
and a drift term (which vanished in the case $\eta=0$)
\begin{equation}
\label{eq:A_def}
A_j(z)=\E(\Delta_j(z)|\cF_{j-1}). 
\end{equation}
Set $\tilde\Phi_n(z)=e^{-cn}\Phi_n(z)-z$ as above.
We start by identifying the leading term in the drift, showing that 
\begin{equation}
\label{eq:An_dec} 
A_j(z)=-c\eta e^{cj}z\tilde\Phi_{j-1}'(e^\s z)+R_j(z)
\end{equation}
where $R_j(z)$ is small provided $\|\tilde\Phi'_{j-1}\|_{\infty,e^\s}$ is sufficiently small. 
This gives the following decomposition
\[ 
\begin{split} 
\tilde\Phi_n(z)
&=e^{-c}\tilde\Phi_{n-1}(e^cz)-c\eta z\tilde\Phi_{n-1}'(e^\s z)+e^{-cn}B_n(z)+e^{-cn}R_n(z)\\
&=P\tilde\Phi_{n-1}(z)+e^{-cn}B_n(z)+e^{-cn}R_n(z)
\end{split}
\]
where $P$ is the operator which acts on analytic functions on $\{|z|>1\}$ by
\begin{equation}
\label{eq:P_def}
Pf(z)=e^{-c}f(e^cz)-c\eta zf'(e^\s z). 
\end{equation}
The reader is alerted to the fact that, while we used $P$ to denote our basic 
particle in Sections \ref{INT} and Appendix \ref{sec:particle}, in the rest of the
paper, $P$ will refer to this operator instead. 
Solving the recursion we end up with 
\begin{equation} 
\label{curlyP}
\tilde\Phi_n(z)=\sum_{j=1}^n e^{-cj}P^{n-j}B_j(z)+\sum_{j=1}^ne^{-cj}P^{n-j}R_j(z). 
\end{equation}
Note that for $\eta=0$ the operator $P$ has the simple form $Pf(z)=e^{-c}f(e^cz)$ and we recover~\eqref{HL0decomp}. 
We treat the general case $\eta\in(-\infty,1]$ by observing that $P$ acts diagonally on the Laurent coefficients, 
thus is a Fourier multiplier operator, 
which we can bound in $\|\cdot\|_{p,r}$-norm by means of the Marcinkiewicz multiplier theorem 
(see Appendix \ref{sec:operators}). 

The proof strategy for the disk theorem then goes as follows. 
For $\d=\d(c)$ small, to be specified, introduce the stopping time 
\begin{equation}
\label{eq:N0_def}
N(\d)=\min\{n\ge0:\|\tilde\Phi_n'\|_{\infty,e^\s}>\d\}.
\end{equation}
Then for all $n\le N(\d)$ the angle density $h_n$ defined in \eqref{hn} is approximately uniform. 
This, together with the multiplier theorem, 
can be used to bound both the martingale term (the first term in \eqref{curlyP}) 
and the remainder term (the second term in \eqref{curlyP}), thus leading to a bound for the map $\tilde\Phi_n$.
At this point it remains to show that we can pick $\d_0$ such that $N(\d_0)\ge\lfloor T/c\rfloor$ with high probability to conclude the proof. 
To this end, it turns out to be convenient to work instead with the differentiated dynamics 	
\[\Psi_n(z)=z\tilde\Phi_n'(z)\]
for which a decomposition similar to \eqref{curlyP} holds (see \eqref{DECOMP} below). 
We use it to show that $\|\Psi_n1_{\{n\le N_0\}}\|_{p,r}$ is small in $L^p(\PP)$ (see Proposition \ref{DFA}), where we have set $N_0 = N(\d_0)$ to ease the notation slightly. 
The analyticity of $\Psi_n$ then allows us to make this bound into a high probability statement on the supremum norm of $\Psi_n1_{\{n\le N_0\}}$, 
at the price of taking $p$ large enough (see Proposition \ref{UDFA}). 
By showing that this bound is smaller than $\d$ for all $n\le N_0$, 
we deduce that in fact we must have $N_0\ge\lfloor T/c\rfloor$, thus concluding the proof. 

\def\j{
\bb
\subsection{Distortion estimate}
Suppose that $f$ is a univalent function on $\{|z|>1\}$, having Laurent expansion
$$
f(z)=z+\sum_{k=0}^\infty a_kz^{-k}.
$$
By Gronwall's area theorem, 
$$
\sum_{k=1}^\infty k|a_k|^2\le1.
$$
So, by Cauchy--Schwarz,
$$
|f'(z)-1|=\left|\sum_{k=1}^\infty ka_kz^{-k-1}\right|\le\left(\sum_{k=1}^\infty k|z|^{-2k-2}\right)^{1/2}=\frac1{|z|^2-1}.
$$
\eb
}\jj

\subsection{Choice of state variables}
The sequence of conformal maps $(\Phi_n)_{n\ge0}$ is a Markov process. 
This allows an approach to the desired scaling limits using martingale estimates.
Above, we introduced the analytic function $\Psi_n$ on $\{|z|>1\}$ given by
$$
\Psi_n(z)=D\tilde\Phi_n(z),
$$
where we set $Df(z)= zf'(z)$ and $\tilde\Phi_n$ is the time-rescaled process of fluctuations given by
$$
\tilde\Phi_n(z)=e^{-cn}\Phi_n(z)-z.
$$
Then the process $(\Psi_n)_{n\ge0}$ is also Markov and it proves more convenient to use this as our primary state variable. In doing this, we forget the limiting values $(\Phi_n(\infty))_{n\ge0}$, so we see the clusters only up to an unknown displacement.
Otherwise, the use of $(\Psi_n)_{n\ge0}$ may be considered as a particular choice of coordinates for the sequence of clusters.
The function $\Phi_n$ has a Laurent expansion in $\{|z|>1\}$ of the form
$$
\Phi_n(z)=e^{cn}\left(z+\sum_{k=0}^\infty a_n(k)z^{-k}\right)
$$
so $\Psi_n$ has expansion 
$$
\Psi_n(z)=-\sum_{k=1}^\infty ka_n(k)z^{-k}.
$$
In the final section of the paper, we will characterise the limit distribution of the rescaled fluctuations, by analysing the Laurent coefficients. 

\section{Expansions to first order and error estimates} 
\label{sec:estimates}

In this section we identify the leading order behaviour of several quantities of interest and gather together bounds on the error terms which hold while the differentiated fluctuation process $(\tilde\Phi'_n)_{n\ge0}$ is well-behaved. Our main objective is to justify \eqref{eq:An_dec}.

Fix $\d_0\in(0,1/8]$ and consider the stopping time $N_0 = N(\delta_0)$ where $N(\delta)$ is defined in \eqref{eq:N0_def}.
Several of our estimates will be made under the assumption that $n \leq N_0$. In fact, in this section, we only use that $|\tilde\Phi'_{j}(e^{\s+i\th})|\le\d_0\le1/8$ when $j=n-1$. However, we will need this to hold for all $j \leq n-1$ in the remainder of the paper and it simplifies notation to make the assumption here. This assumption guarantees that $h_n$, defined in \eqref{hn}, can be bounded above and below by absolute constants.  Bounding very crudely,
\begin{equation}\label{h_uniform}
\frac12 \le h_{n}(\th)\le \frac32 \q \mbox{so} \q |h_{n}(\th)-1|\le \frac12.
\end{equation}
A more refined analysis shows that, for all $n \leq N_0$,
\begin{equation}
\label{eq:hn_refined}
\left | h_{n}(\th)-1+\eta\re\tilde\Phi'_{n-1}(e^{\s+i\th}) \right | \leq C \delta_0^2
\end{equation}
where $C = C(\eta)$ is a constant depending only on the value of $\eta$. As the precise computation consists of elementary manipulations, it is deferred to Appendix \ref{sec:comp_est} (see \eqref{HN} and \eqref{eq:e5_est}).

Recall the definitions of $\Delta_n(\th,z)$ and $\Delta_n(z)$ from \eqref{eq:Delta_def} and the definitions of $A_n(z)$ and $B_n(z)$ from \eqref{eq:A_def} and \eqref{eq:B_def}. Then  
$$
A_n(z)=\fint_0^{2\pi}\Delta_n(\th,z)h_{n}(\th)d\th.
$$
Furthermore,
$A_n$ and $B_n$ are analytic in $\{|z|>1\}$ and, almost surely,
$$
\E(B_n(z)|\cF_{n-1})=0.
$$
As we showed in the proof of Theorem \ref{th:HL0}, by Cauchy's theorem,
$$
\int_0^{2\pi}\Delta_n(\th,z)d\th=0
$$
so
\begin{equation}
\label{eq:An_h1}
A_n(z)=\fint_0^{2\pi}\Delta_n(\th,z)(h_{n}(\th)-1)d\th.
\end{equation}

We now identify the leading order terms in $\Delta_n(z)$ and $A_n(z)$, in the limit $c\to0$. Where the computations add little to the intuition, these are also deferred to Appendix \ref{sec:comp_est}.

Given $\th\in[0,2\pi)$ and $|z|>1$, define, for $s\in[0,1]$,
\begin{equation}\label{Gdef}
\begin{split} 
F_s(z)
&=z\exp\left((1-s)c+s\log\frac{F(z)}z\right),\\
F_{s,\th}(z)
&=e^{i\th}F_s(e^{-i\th}z)=z\exp\left((1-s)c+s\log\frac{F(e^{-i\th}z)}{e^{-i\th}z}\right).
\end{split} 
\end{equation}
Note that $F_{0,\th}(z)=e^cz$ and $F_{1,\th}(z)=e^{i\th}F(e^{-i\th}z)$.
Note also that $|F_{s,\th}(z)|\ge|z|$ for all $s\in[0,1]$ and
$$
\frac d{ds}\log F_{s,\th}(z)
=\log\frac{F(e^{-i\th}z)}{e^{-i\th}z}-c.
$$
Then
\begin{align}
\notag
\Delta_n(\th,z)
&=\Phi_{n-1}(e^{i\th}F(e^{-i\th}z))-\Phi_{n-1}(e^cz) \\ 
\notag
&=\int_0^1D\Phi_{n-1}(F_{s,\th}(z))\,\frac{d}{ds}\log F_{s,\th}(z)ds\\
\notag
&=\frac{2c\b e^{cn}z}{e^{-i\th}z-1}
+\left(\log\frac{F(e^{-i\th}z)}{e^{-i\th}z}-c\right)
\int_0^1(D\Phi_{n-1}(F_{s,\th}(z))-e^{cn}z)ds\\ 
&\q\q\q\q\q\q\q\q\q\q\qquad\qquad+e^{cn}z\left(\log\frac{F(e^{-i\th}z)}{e^{-i\th }z}-c-\frac{2c\b}{e^{-i\theta}z-1}\right),
\label{PFMB}
\end{align}
where $\b$ is defined in Proposition \ref{PPC} in the appendix.
It will be convenient to set
\begin{equation}
\label{eq:mn_def}
m_n(\th,z)
=\frac{2c\b e^{cn}z}{ze^{-i\th}-1}
\end{equation}
and
\begin{equation}
\label{eq:wn_def}
w_n(\th,z) = \Delta_n(\th,z) - m_n(\th,z).
\end{equation}
Note that $w_n(\th,\infty)=0$ and for all $|z|\ge1+\sqrt c$ 
\begin{equation}\label{eq:wn_bound}
|w_n(\th,z)|\le\frac{Cce^{cn}}{|e^{-i\th}z-1|}\int_0^1|\Psi_{n-1}(F_{s,\th}(z))|ds+ \frac{C e^{cn}c^{3/2}|z|}{|e^{-i\th}z-1|(|z|-1)}. 
\end{equation}
for some constant $C=C(\eta,\L)<\infty$ (see \eqref{E7B} and \eqref{E6B}).

Using \eqref{eq:An_h1}, \eqref{eq:hn_refined}, \eqref{PFMB} and that $|\beta - 1| \leq \Lambda \sqrt{c}/2$ (cf.~Proposition \ref{PPC}), the leading term of $A_n(z)$ is
\begin{equation}
\label{eq:Ln_def}
L_n(z) = -\fint_0^{2\pi}\eta\re\tilde\Phi'_{n-1}(e^{\s+i\th})\frac{2c e^{cn}z}{ze^{-i\th}-1}d\th
=-c\eta e^{cn}z\tilde\Phi'_{n-1}(e^\s z),
\end{equation}
where the equality follows by Cauchy's integral formula. To be precise, set
\begin{equation}
\label{eq:Rn_def}
R_n(z) = A_n(z) - L_n(z). 
\end{equation}
Then, by the argument in Appendix \ref{sec:comp_est}, for $n \leq N_0$ and $|z|=r$ with $r\ge1+\sqrt c$,
\begin{align}
\label{eq:Rn_est}
\notag |R_n(z)-R_n(\infty)|
&\le 
\frac{Cce^{cn}\d_0}r\left(1+\log\left(\frac r{r-1}\right)\right)\left(\d_0+\sqrt c\left(\frac r{r-1}\right)\right)\\
&\q\q
+Cc^{3/2}c^{cn}|\Psi_{n-1}(e^\s z)|
+Cce^{cn}\d_0\int_0^1\fint_0^{2\pi}\frac{|\Psi_{n-1}(F_{s,\th}(z))|}{|ze^{-i\th}-1|}d\th ds
\end{align}
and
$$
|R_n(\infty)|\le Cce^{cn}\d_0^2
$$
for some constant $C=C(\eta,\Lambda)<\infty$ (possibly different to the constant $C$ obtained earlier). By the maximum principle, it follows that provided one takes $\delta_0 \geq \sqrt{c}/(e^\s - 1)$ and $r \geq e^\s \geq 1 + \sqrt{c}$, 
\[
|R_n(z)| \leq Cce^{cn} \delta_0^2 \log\left(\frac r{r-1}\right).
\]
From this bound, it can be easily seen that $R_n(z)$ is small if $\|\tilde\Phi'_{n-1}\|_{\infty,e^\s}$ is sufficiently small, which is what we wanted to show. However, the assumption that $r \geq e^\s$ is too restrictive for our needs, so in subsequent analysis we revert to the more general estimate \eqref{eq:Rn_est}.

\section{Linear evolution equation for the fluctuations}
\label{sec:evol}

In this section, our objective is to justify the expansion \eqref{curlyP}. In fact, we obtain an analogous expansion which makes it clearer which terms determine the leading order fluctuations.

In Section \ref{UFA} we decomposed $\Delta_n(z)=\Phi_n(z)-\Phi_{n-1}(e^cz)$ as a sum of a martingale difference $B_n(z)$ and drift $A_n(z)$, and in the previous section we justified writing
$$
A_n(z) = L_n(z)+R_n(z).
$$
In view of \eqref{eq:wn_def}, it is convenient to split the martingale difference $B_n$ as a sum of analytic functions
$$
B_n(z)=M_n(z)+W_n(z)
$$
where
$$
M_n(z)=m_n(\Th_n,z)-\fint_0^{2\pi}m_n(\th,z)h_n(\th)d\theta
$$
and
$$
W_n(z)=w_n(\Th_n,z)-\fint_0^{2\pi}w_n(\th,z)h_n(\th)d\theta.
$$
We will see that $M_n$ is the main term: its explicit form allows for precise estimates, 
and it determines the Gaussian fluctuations. 
On the other hand, $W_n$ is accessible less directly, but is of smaller order, so can also be handled adequately.
Then, using \eqref{eq:Rn_def},
\begin{equation*} 
\Phi_n(z)=\Phi_{n-1}(e^cz)+M_n(z)+L_n(z)+W_n(z)+R_n(z)
\end{equation*}
so we obtain the linear evolution equation
\begin{equation} 
\label{TIPH}
\tilde\Phi_n(z)=P\tilde\Phi_{n-1}(z)+e^{-cn}M_n(z)+e^{-cn}W_n(z)+e^{-cn}R_n(z) 
\end{equation}
where $P$ is as in \eqref{eq:P_def}.
Note that $P$ acts diagonally on the Laurent coefficients, with multipliers
$$
p(k)=e^{-c(k+1)}+c\eta ke^{-\s(k+1)},\q k\ge0.
$$
In the case $\eta\in[0,1]$, we factorize $P$ by writing
\begin{equation} \label{eq:p0}
p(k)=e^{-c}e^{-c(1-\eta)k}p_0(k).
\end{equation}
It is straightforward to check then that, for all $k$,
\begin{equation}\label{POE}
0\le p_0(k+1)\le p_0(k)\le 1.
\end{equation}
In order to adapt our argument to the case $\eta\in(-\infty,0)$, 
we would modify the equation defining $p_0(k)$ to
$$
p(k)=e^{-c(k+1)}p_0(k).
$$
The subsequent argument is very similar so we will not give further details for this case.

Write $P_0$ for the multiplier operator acting on analytic functions on $\{|z|>1\}$ by
$$
\widehat{P_0f}(k)=p_0(k)\hat f(k)
$$
and note that
$$
Pf(z)=e^{-c}P_0f(e^{c(1-\eta)}z).
$$
We iterate \eqref{TIPH} to obtain
\begin{equation}
\label{barDECOMP}
\tilde\Phi_n(z)=\tilde\cM_n(z)+\tilde\cW_n(z)+\tilde\cR_n(z)
\end{equation}
where
\begin{equation*} 
\begin{split} 
\tilde\cM_n(z)&
=\sum_{j=1}^ne^{-cj}P^{n-j}M_j(z)
=e^{-cn}\sum_{j=1}^nP_0^{n-j}M_j(e^{c(1-\eta)(n-j)}z),\\
\tilde\cW_n(z)&=e^{-cn}\sum_{j=1}^nP_0^{n-j}W_j(e^{c(1-\eta)(n-j)}z),\\
\tilde\cR_n(z)&=e^{-cn}\sum_{j=1}^nP_0^{n-j}R_j(e^{c(1-\eta)(n-j)}z).
\end{split}
\end{equation*}
Then, on differentiating,
\begin{equation}\label{DECOMP}
\Psi_n(z)=\cM_n(z)+\cW_n(z)+\cR_n(z)
\end{equation}
where
\begin{align*}
\cM_n(z)=e^{-cn}\sum_{j=1}^nP_0^{n-j}DM_j(e^{c(1-\eta)(n-j)}z),\\
\cW_n(z)=e^{-cn}\sum_{j=1}^nP_0^{n-j}DW_j(e^{c(1-\eta)(n-j)}z),\\
\cR_n(z)=e^{-cn}\sum_{j=1}^nP_0^{n-j}DR_j(e^{c(1-\eta)(n-j)}z).
\end{align*}
We will focus initially on bounding the terms in the decomposition \eqref{DECOMP} of the differentiated fluctuations $\Psi_n$.
We  will refer to $\cM_n$, $\cW_n$ and $\cR_n$ as the principal martingale term, 
the second martingale term and the remainder term respectively. 
Later, we will return also to the undifferentiated decomposition \eqref{barDECOMP}.

\subsection{Norms}
We conclude this section by describing the normed spaces on which we will obtain our bounds.

Recall from \eqref{prnorm} the definition of $\|f\|_{p,r}$ for a measurable function $f$ on $\{|z|>1\}$.
For a random such function $\Phi$, we will write
$$
\tn\Phi\tn_{p,r}=\left(\E\fint_0^{2\pi}|\Phi(re^{i\th})|^pd\th\right)^{1/p}.
$$
Thus
\begin{equation*} \label{norms}
\tn\Phi\tn_{p,r}=\|\|\Phi\|_{p,r}\|_p = 
\left( \fint_0^{2\pi} \| \Phi (re^{i\th}) \|_p^p d\th \right)^{1/p} 
\end{equation*}
where $\|\cdot\|_p$ denotes the $L^p(\PP)$-norm on the probability space.

Note that, for all $n\ge0$, 
the boundedness and monotonicity seen in \eqref{POE} allows an application of the Marcinkiewicz multiplier theorem (see Appendix \ref{sec:operators}),
with $m_k=p_0(k)^n$ and $M=1$ to see that for all $p\in(1,\infty)$ and all $r>1$, 
there is a constant $C=C(p)<\infty$ such that
\begin{equation}\label{PIE}
\|P_0^nf\|_{p,r}\le C\|f\|_{p,r}.
\end{equation}
Some further operator estimates which will be used in the subsequent analysis are stated in Appendix \ref{sec:operators}.

\section{Estimation of terms in the decomposition of the differentiated fluctuations}
\label{sec:estdecom}
In this section we collect estimates for the principal martingale term, the second martingale term and remainder term. 

We first estimate the principal martingale term $\cM_n(z)$ in the decomposition \eqref{DECOMP} of the differentiated fluctuation process, which is given by
$$
\cM_n(z)=e^{-cn}\sum_{j=1}^nP_0^{n-j}DM_j(e^{c(1-\eta)(n-j)}z).
$$

\begin{lemma}\label{lemma:mbound}
For all $p\in[2,\infty)$, there is a constant $C=C(p)<\infty$ such that
\begin{equation}
\label{MNEST}
 \tn\cM_n1_{\{n\le N_0\}}\tn_{p,r}^2 
\le C\left ( \frac{c^{2-2/p}r^{2-2/p}}{(r-1)^{4-2/p}} + c^2\sum_{j=1}^ne^{-2c(n-j)}\frac{r_{n-j}}{(r_{n-j}-1)^3} \right ), 
\end{equation}
where $r_n=re^{c(1-\eta)n}$. 

It follows that if $r \geq 1 + c^{1/2 - \ve}$ for some $\ve \in (0, 1/2)$,
\begin{equation*}
\label{MPA}
 \tn\cM_n1_{\{n\le N_0\}}\tn_{p,r} 
\le 
\begin{dcases}
\frac{C\sqrt c}{r-1}, &\q \eta < 1; \\
\frac{C\sqrt c}r\left(\frac r{r-1}\right)^{3/2}, &\q \eta=1.
\end{dcases}
\end{equation*}
\end{lemma}
\begin{proof}
By Burkholder's inequality (cf.\ Theorem \ref{thB}), for all $p\in[2,\infty)$, there is a constant $C=C(p)<\infty$ such that
\begin{align*}
&\|\cM_n(z)1_{\{n\le N_0\}}\|_p^2 \\
&\q \q \q \le Ce^{-2cn}\left ( \| \max_{1 \leq j \leq n} X_{j,n}(e^{c(1-\eta)(n-j)}z)1_{\{j\le N_0\}}\|_p^2 + \sum_{j=1}^n\|\tilde Q_{j,n}(e^{c(1-\eta)(n-j)}z)1_{\{j\le N_0\}}\|_{p/2} \right ),
\end{align*}
where
$$
X_{j,n}(z)
=|P_0^{n-j}DM_j(e^{c(1-\eta)(n-j)}z)| \quad \mbox{ and } \quad \tilde Q_{j,n}(z) = \E(|P_0^{n-j}DM_j(z)|^2|\cF_{j-1}).
$$
So, on taking the $\|\cdot\|_{p/2,r}$-norm,
\begin{align}
\label{MEE2}
\tn\cM_n1_{\{n\le N_0\}}\tn_{p,r}^2
\le Ce^{-2cn}\left (  \tn \max_{1 \leq j \leq n } X_{j,n}1_{\{j\le N_0\}}\tn_{p,r}^2 + \sum_{j=1}^n
\tn \tilde Q_{j,n}1_{\{j\le N_0\}}\tn_{p/2,r_{n-j}} \right ).
\end{align}
Recall from \eqref{eq:mn_def} that
$$
m_j(\th,z)
=\frac{2c\b e^{cj}z}{ze^{-i\th}-1}
=2c\b e^{cj}\sum_{k=0}^\infty z^{-k}e^{i\th(k+1)}.
$$
Observe that
$$
P_0^{n-j}Dm_j(\th,z)
=2c\b e^{cj}\sum_{k=0}^\infty p_0(k)^{n-j}(-k) z^{-k}e^{i\th(k+1)}.
$$
Hence, almost surely,
$$
\E(|P_0^{n-j}DM_j(z)|^2|\cF_{j-1})
\le\E(|P_0^{n-j}Dm_j(\Theta_j , z)|^2|\cF_{j-1})
=\fint_0^{2\pi}|P_0^{n-j}Dm_j(\th,z)|^2h_{j}(\th)d\th
$$
and, for $|z|=r$,
$$
\fint_0^{2\pi}|P_0^{n-j}Dm_j(\th,z)|^2d\th
=4c^2|\b|^2e^{2cj}\sum_{k=0}^\infty p_0(k)^{2(n-j)}k^2r^{-2k}
\le4c^2|\b|^2e^{2cj}\sum_{k=0}^\infty k^2r^{-2k}.
$$
For $j\le N_0$, we have $h_{j}(\th)\le3/2$, so we obtain, for $|z|=r$, almost surely, 
$$
\tilde Q_{j,n}(z)
\le 6c^2|\b|^2e^{2cj}\frac r{(r-1)^3}
$$
where we have used
$$
\sum_{k=0}^\infty k^2r^{-2k}
=\frac{r^2(r^2+1)}{(r-1)^3(r+1)^3}
\le\frac r{(r-1)^3}.
$$
Hence, for $|z|=r$, almost surely,
$$ 
\sum_{j=1}^n\tilde Q_{j,n}(e^{c(1-\eta)(n-j)}z)1_{\{j\le N_0\}} 
\le Cc^2|\b|^2\sum_{j=1}^ne^{2cj}\frac{r_{n-j}}{(r_{n-j}-1)^3}.
$$
Moreover, 
\begin{align*}
X_{j,n}(z) 
&= \left | P_0^{n-j}Dm_j(\Theta_j,e^{c(1-\eta)(n-j)}z)-\E(P_0^{n-j}Dm_j(\Theta_j,e^{c(1-\eta)(n-j)}z)|\cF_{j-1}) \right | \\
&\leq \left | P_0^{n-j}Dm_j(\Theta_j,e^{c(1-\eta)(n-j)}z) \right |+ \E(|P_0^{n-j}Dm_j(\Theta_j,e^{c(1-\eta)(n-j)}z)|^2 |\cF_{j-1})^{1/2},
\end{align*}
and
\begin{align*}
\left \| P_0^{n-j}Dm_j(\Theta_j,z) \right \|_{p,r}^{p}
&= \fint_0^{2 \pi} | P_0^{n-j}Dm_j(\Theta_j,re^{i \th}) |^p d\th 
= \fint_0^{2 \pi} | P_0^{n-j}Dm_j(0,re^{i (\th-\Theta_j)}) |^p d\th \\
&=\left \| P_0^{n-j}Dm_j(0,z) \right \|_{p,r}^{p} 
\leq C \left \| Dm_j(0,z) \right \|_{p,r}^{p} 
\leq \frac{Cc^p |\beta|^p e^{pcj}r^{p-1}}{(r-1)^{2p-1}}.
\end{align*}
Hence, 
\begin{align*}
\| \max_{1\leq j \leq n} X_{j,n}1_{\{j\le N_0\}}\|_{p,r}^p 
& \leq \sum_{j=1}^n \| X_{j,n}1_{\{j\le N_0\}}\|_{p,r}^p 
\leq \frac{Cc^{p-1} |\beta|^p e^{pcn}r^{p-1}}{(r-1)^{2p-1}}.
\end{align*}
Finally, we take the $\|.\|_{p/2,r}$-norm and substitute into \eqref{MEE2} to obtain \eqref{MNEST}.

Now suppose $r\ge1+c^{1/2 - \ve}$ for some $\ve \in (0,2)$ and $p \geq 1 + 1/(2 \ve)$.
If $\eta<1$, by using an integral comparison in \eqref{MNEST} we obtain
\begin{align}
\notag \tn\cM_n1_{\{n\le N_0\}}\tn_{p,r}^2
&\le Cc^2\bigg(\frac r{(r-1)^3}+\sum_{j=1}^{n-1}e^{-2cj}\frac{r_j}{(r_j-1)^3}\bigg)
 +   \frac{C c^{2-2/p}r^{2-2/p}}{(r-1)^{4-2/p}}   \\
\label{MPA2}
&\le Cc\left(\frac 1{(r-1)^2}+\int_0^\infty\frac{cre^{c(1-\eta)\t}}{(re^{c(1-\eta)\t}-1)^3}d\t\right)
=\frac{C c}{(r-1)^2}
\end{align}
where we used the assumption on $p$ in the second inequality, and absorbed a factor of $2+1/(2-2\eta)$ in the final constant $C$.
Hence
\begin{equation*}
 \tn\cM_n1_{\{n\le N_0\}}\tn_{p,r} 
\le \frac{C\sqrt c}{r-1}. 
\end{equation*}

If $\eta=1$, we now have $r_n = r$, so
\[ \begin{split} 
\tn\cM_n1_{\{n\le N_0\}}\tn_{p,r}^2
& \le Cc^2\sum_{j=0}^{n-1}e^{-2cj}\frac r{(r-1)^3}
+ \frac{C c^{2-2/p}}{r^2} \left( \frac r{r-1} \right)^{4-2/p} 
\\ & \le\frac{Cc}{r^2} \left( \frac{r}{r-1} \right)^3 \bigg( 1 + c^{1-2/p} \left( \frac r{r-1} \right)^{1-2/p} \bigg) 
\end{split}\] 
and then, using that $p\geq 2$, 
$$
\tn\cM_n1_{\{n\le N_0\}}\tn_{p,r}
\le\frac{C\sqrt c}r\left(\frac r{r-1}\right)^{3/2}.
$$

\end{proof}

We now state the estimate of the second martingale term $\cW_n(z)$ in the decomposition \eqref{DECOMP} of the differentiated fluctuation process, which is given by
$$
\cW_n(z)=e^{-cn}\sum_{j=1}^nP_0^{n-j}DW_j(e^{c(1-\eta)(n-j)}z).
$$
The proof is deferred to Appendix \ref{sec:proof_W_est}.

\begin{lemma}
\label{lemma:wbound}
For all $\ve \in (0, 1/2)$ and $p\in[2,\infty)$, there is a constant $C=C(\Lambda, \eta, \epsilon, p)<\infty$ such that, for all $r\ge1+ 2c^{1/2 - \ve}$, 
\begin{align}
\label{MNESTEG}
\nonumber \tn\cW_n1_{\{n\le N_0\}}\tn_{p,r}^2
&\le Cc^2\sum_{j=1}^n e^{-2c(n-j)}
\frac{r_{n-j}}{(r_{n-j}-1)^3}\left(\tn\Psi_{j-1}1_{\{j\le N_0\}}\tn_{p,\rho_{n-j}}^2
+c\left(\frac r{r-1}\right)^2\right) \\
& \ + \frac{Ce^{2c(n-1)}c^{3-2/p}r^{4-2/p}}{(r-1)^{6-2/p}} + \frac{Ce^{2c(n-1)}c^{2-2/p}r^{2-2/p}}{(r-1)^{4-2/p}}\max_{1\leq j \leq n}\tn\Psi_{j-1}1_{\{j\le N_0\}}\tn_{p,\rho_{n-j}}^2,
\end{align}
where $r_n=re^{c(1-\eta)n}$ and $\rho_n = (1 + r_n)/2$.

It follows that, setting $\rho = (1+r)/2$, for $p \geq 1 + 1/(2 \ve)$,
$$
\tn\cW_n1_{\{n\le N_0\}}\tn_{p,r}
\le \begin{dcases}
\frac{C\sqrt c}{r-1}
\bigg(
\sup_{j\le n}\tn\Psi_{j-1}1_{\{j\le N_0\}}\tn_{p,\rho}
+\sqrt c\left(\frac r{r-1}\right) 
\bigg), &\q \eta < 1 \\
\frac{C\sqrt c}r \left(\frac r{r-1}\right)^{3/2} 
\bigg( \sup_{j\le n}\tn\Psi_{j-1}1_{\{j\le N_0\}}\tn_{p,\rho}
+\sqrt{c} \left(\frac r{r-1}\right) \bigg) &\q \eta=1.
\end{dcases}
$$
\end{lemma}

We finish this section with the estimate of the remainder term $\cR_n(z)$ in the decomposition \eqref{DECOMP} of the differentiated fluctuation process, which is given by
$$
\cR_n(z)=e^{-cn}\sum_{j=1}^nP_0^{n-j}DR_j(e^{c(1-\eta)(n-j)}z).
$$
The proof is deferred to Appendix \ref{sec:proof_R_est}.

\begin{lemma} \label{lemma:rnbound}
For all $p\in[2,\infty)$, there is a constant $C=C(\Lambda, \eta, p)<\infty$ such that, for all $r\ge1+ 2\sqrt{c}$, 
\begin{align}
\notag
&\tn\cR_n1_{\{n\le N_0\}}\tn_{p,r} \\
\notag
&\le Cc
\sum_{j=1}^n
\frac{e^{-c(n-j)}\d_0}{r_{n-j}-1}
\left(\d_0+\tn\Psi_{j-1}1_{\{j\le N_0\}}\tn_{p,\rho_{n-j}}
+\sqrt c\left(\frac{r_{n-j}}{r_{n-j}-1}\right)\right)
\left(1+\log\left(\frac{r_{n-j}}{r_{n-j}-1}\right)\right)\\
\label{RNEST}
&\q\q+Cc^{3/2}
\sum_{j=1}^n
e^{-c(n-j)}
\tn\Psi_{j-1}1_{\{j\le N_0\}}\tn_{p,\rho_{n-j}}
\left(\frac{r_{n-j}}{r_{n-j}-1}\right),
\end{align}
where we have used the same notation as in Lemma \ref{lemma:wbound}.

Now suppose that $n \leq T/c$ for some constant $T>0$. Then there is a constant $C=C(\Lambda, \eta, p, T)<\infty$ such that
\begin{align*}
\tn\cR_n1_{\{n\le N_0\}}\tn_{p,r}
&\le\frac{C\d_0}r\left(\d_0+\sup_{j\le n}\tn\Psi_{j-1}1_{\{j\le N_0\}}\tn_{p,\rho}
+\sqrt c\left(\frac r{r-1}\right)\right)\left(1+\log\left(\frac r{r-1}\right)\right)^2\\
&\q\q+
C\sqrt c\sup_{j\le n}\tn\Psi_{j-1}1_{\{j\le N_0\}}\tn_{p,\rho}
\left(1+\log\left(\frac r{r-1}\right)\right),
\end{align*}
when $\eta < 1$ and
\begin{align*}
\tn\cR_n1_{\{n\le N_0\}}\tn_{p,r}
&\le\frac{C\d_0}{r-1}\left(\d_0+\sup_{j\le n}\tn\Psi_{j-1}1_{\{j\le N_0\}}\tn_{p,\rho}
+\sqrt c\left(\frac r{r-1}\right)\right)\left(1+\log\left(\frac r{r-1}\right)\right)\\
&\q\q+
C\sqrt c\sup_{j\le n}\tn\Psi_{j-1}1_{\{j\le N_0\}}\tn_{p,\rho}
\left(\frac r{r-1}\right),
\end{align*}
when $\eta = 1$.
\end{lemma}

\section{Convergence to a disk for ALE$(\eta)$}
\label{sec:diskconv}
In this section we derive our main disk theorem. 
Recall that 
\begin{equation}
\label{NO}
N_0=\min\left\{n\ge0:\|\tilde\Phi'_n\|_{\infty,e^\s}>\d_0\right\} . 
\end{equation}
First we show that $\tn\Psi_n1_{\{n\le N_0\}}\tn_{p,r}$ is small, provided $\d_0$ is appropriately chosen. 
Then we deduce estimates on the random norms $\|\Psi_n1_{\{n\le N_0\}}\|_{\infty,r}$, valid with high probability, 
and use them to dispense with the restriction that $n\le N_0$. 
Finally, we apply these results to show that $\Phi_n(z)$ is close to $e^{cn}z$.

\subsection{$L^p$-estimates on the differentiated fluctuations}\label{LPE}
The proposition below shows that, for an appropriately chosen $\d_0$, 
the $\tn\cdot\tn_{p,r}$ norm of the differentiated fluctuation process $\Psi_n1_{\{n\le N_0\}}$ is of order $\sqrt c$, 
with quantitative control of the singularity as $r\to1$ and the decay as $r\to\infty$. 
The dependence of the estimate on $\s$ is also explicit, allowing one to consider limits in which $\s\to0$ as $c\to0$. 
For small $c$, the estimates are strongest when $\ve$ and $\nu$ are taken to be small.
A second argument, given in the next subsection, will show that the event $\{n\le N_0\}$ appearing in \eqref{PDF} and \eqref{PDG} is of high probability in the limit $c\to0$.

\begin{proposition}
\label{DFA}
For all $\eta\in[0,1)$, $T\in(0,\infty)$, $\ve\in(0,1/2)$, $\nu\in(0,\ve /2)$ and $p \in [2,\infty ) $, there is a constant $C=C(\L,\eta,T,\ve,\nu,p)\in[1,\infty)$ with the following property.
For all $c\in(0,1]$, all $r,e^\s\ge1+c^{1/2-\ve}$ and all $n\le T/c$, we have
\begin{equation}\label{PDF}
\tn\Psi_n1_{\{n\le N_0\}}\tn_{p,r}
\le\frac Cr\left(\sqrt c\left(\frac r{r-1}\right)+\frac{c^{1-3\nu}}{(e^\s-1)^2}
\right)
\end{equation}
where $N_0$ is given by \eqref{NO} with $\d_0=c^{1/2-\nu}/(e^\s-1)$.

Moreover, in the case $\eta=1$, for all $T\in(0,\infty)$, $\ve\in(0,1/5)$, $\nu\in(0,3\ve/2)$ and $p \in [2,\infty ) $, 
there is a constant $C=C(\L,T,\ve,\nu,p)\in[1,\infty)$ with the following property.
For all $c\in(0,1]$, all $r,e^\s\ge1+c^{1/5-\ve}$ and all $n\le T/c$, we have
\begin{equation}\label{PDG}
\tn\Psi_n1_{\{n\le N_0\}}\tn_{p,r}
\le\frac Cr\bigg( \sqrt{c}\left(\frac r{r-1}\right)^{3/2}+\frac{c^{1-3\nu}}{(e^\s-1)^3}\left(\frac r{r-1}\right)  \bigg)
\end{equation}
where $N_0$ is given by \eqref{NO} with $\d_0=c^{1/2-\nu}/(e^\s-1)^{3/2}$.
\end{proposition}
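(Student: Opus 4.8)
I would run a discrete Gronwall (bootstrap) argument directly on the decomposition \eqref{DECOMP}, $\Psi_n=\cM_n+\cW_n+\cR_n$, fed by the estimates \eqref{MNEST}, \eqref{MNESTEG} and \eqref{RNEST} of Section \ref{sec:estdecom}. Since the right-hand sides of \eqref{PDF} and \eqref{PDG} do not involve $p$, and $\tn\,\cdot\,\tn_{p,r}\le\tn\,\cdot\,\tn_{p',r}$ for $p\le p'$ by Jensen's inequality, it suffices to prove the estimate for all $p\ge p_0$, where $p_0=p_0(\ve)$ is taken large enough that $c^{1-1/p}r^{1-1/p}(r-1)^{-2+1/p}\le 2\sqrt c\,(r-1)^{-1}$ whenever $r\ge1+c^{1/2-\ve}$ (resp. whenever $r\ge 1+c^{1/5-\ve}$ when $\eta=1$); this is the inequality that tames the ``endpoint'' terms in \eqref{MNEST} and \eqref{MNESTEG}. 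Throughout, $\beta$ is bounded via Proposition \ref{PPC}, $p_0$ is absorbed into the constants, and $N_0$ is the stopping time \eqref{NO} with $\d_0$ as in the statement; since $\d_0\le c^{\ve-\nu}\to0$, we have $\d_0\le1/8$ once $c$ is small, so that \eqref{h_uniform}, \eqref{POE} and \eqref{PIE} are available on $\{j\le N_0\}$.

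\textbf{The bootstrapped quantity and the key sums.} Fix $\eta\in[0,1)$, write $\phi(r)=r^{-1}\big(\sqrt c\,r/(r-1)+c^{1-3\nu}/(e^\s-1)^2\big)$ for the target function, and set
\[
v_n=\max_{0\le j\le n}\ \sup_{r\ge 1+c^{1/2-\ve}}\ \phi(r)^{-1}\,\tn\Psi_j1_{\{j\le N_0\}}\tn_{p,r}.
\]
Gronwall's area theorem applied to the univalent maps $e^{-cj}\Phi_j$ gives $\tn\Psi_j\tn_{p,r}\le r/(r^2-1)$, whence $v_n\le c^{-1/2}<\infty$ for every $n$. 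The heart of the argument is to insert \eqref{MNEST}, \eqref{MNESTEG}, \eqref{RNEST} into \eqref{DECOMP}, substitute $\tn\Psi_{j-1}1_{\{j\le N_0\}}\tn_{p,\rho_{n-j}}\le v_{n-1}\phi(\rho_{n-j})$ (legitimate since $\{j\le N_0\}\subseteq\{j-1\le N_0\}$), and evaluate the sums over $j$. The structural point for $\eta<1$ is that the radii $r_{n-j}=re^{c(1-\eta)(n-j)}$ and $\rho_{n-j}$ occurring there grow, so $r_{n-j}-1\ge(r-1)+c(1-\eta)(n-j)$; a change of variables $x=re^{c(1-\eta)t}$ then gives, e.g., $c^2\sum_k e^{-2ck}r_k/(r_k-1)^3\le C_\eta\,c/(r-1)^2$, together with $\phi(\rho_k)\le 2\phi(r)$ and, where needed, the sharper $\phi(\rho_k)\le 2\sqrt c/((r-1)+ck)+Cc^{1-3\nu}/(r(e^\s-1)^2)$. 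The upshot is bounds of the form
\[
\tn\cM_n1_{\{n\le N_0\}}\tn_{p,r}^2\le C\phi(r)^2,\qquad
\tn\cW_n1_{\{n\le N_0\}}\tn_{p,r}^2\le C\phi(r)^2+\epsilon(c)\,v_{n-1}^2\phi(r)^2,
\]
\[
\tn\cR_n1_{\{n\le N_0\}}\tn_{p,r}\le\epsilon(c)\,\phi(r)+\epsilon(c)\,v_{n-1}\phi(r),
\]
where $\epsilon(c)\to0$ as $c\to0$ and $\sup_{c\le1}\epsilon(c)<\infty$. Here the hypotheses $r,e^\s\ge 1+c^{1/2-\ve}$ and $\nu<\ve/2$ are used precisely to absorb the extra $\d_0$, $c$, $c^{1/2}$ and logarithmic factors against the singular kernels; they also make $\phi(r)\le Cc^{\min(\ve,\,2\ve-3\nu)}$ small on the relevant range, which is what forces the $v_{n-1}$-dependent terms to carry a genuinely small prefactor.

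\textbf{Closing the recursion.} Combining the three bounds through $\Psi_n=\cM_n+\cW_n+\cR_n$ and taking $\phi(r)^{-1}$, the supremum over $r$ and the maximum over $j\le n$ gives a recursion $v_n\le a+b\,v_{n-1}$ with $a=a(\L,\eta,T,p)<\infty$ and $b=b(c)\le\sqrt{C\epsilon(c)}\to0$. For $c$ small enough that $b\le1/2$, and using $v_0=0$, iteration gives $v_n\le 2a$ for all $n\le T/c$; for the remaining (bounded-away-from-zero) values of $c$ the crude bound $v_n\le c^{-1/2}$ is itself uniform, so in all cases $v_n\le C$, which is exactly \eqref{PDF}. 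The case $\eta=1$ runs through the same scheme with $\d_0=c^{1/2-\nu}/(e^\s-1)^{3/2}$, but now $r_{n-j}=r$: there is no radius growth, the sums produce the heavier singular factors $(r/(r-1))^{3/2}$ and $r/(r-1)$ of \eqref{PDG}, and one checks the analogous absorptions go through under $\ve<1/5$, $\nu<3\ve/2$ and $r,e^\s\ge 1+c^{1/5-\ve}$.

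\textbf{Main obstacle.} The hard part is the bookkeeping in the middle step: verifying that each term generated by the sums in \eqref{MNESTEG} and especially \eqref{RNEST} — with its several pieces carrying $\d_0$, mixed and logarithmic factors, together with the $\max_j$ contributions of \eqref{MNESTEG} — is dominated either by $C\phi(r)^2$ (resp.\ $C\phi(r)$) or by $\epsilon(c)$ times $v_{n-1}^2\phi(r)^2$ (resp.\ $v_{n-1}\phi(r)$), uniformly in $r\ge 1+c^{1/2-\ve}$ and in $c\in(0,1]$. This is where the exact exponents in the hypotheses ($\nu<\ve/2$, the power of $c$ in $\d_0$, the lower bound on $\s$) are pinned down, and where the $\eta<1$ versus $\eta=1$ dichotomy in the conclusion originates.
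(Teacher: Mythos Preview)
Your overall strategy---feed the Section~\ref{sec:estdecom} estimates into the decomposition \eqref{DECOMP} and bootstrap---is exactly the paper's, but the specific bootstrap scheme you propose has a gap that prevents it from closing. You define
\[
v_n=\max_{0\le j\le n}\ \sup_{r\ge 1+c^{1/2-\ve}}\ \phi(r)^{-1}\,\tn\Psi_j1_{\{j\le N_0\}}\tn_{p,r}
\]
and then substitute $\tn\Psi_{j-1}1_{\{j\le N_0\}}\tn_{p,\rho_{n-j}}\le v_{n-1}\,\phi(\rho_{n-j})$ into \eqref{MNESTEG} and \eqref{RNEST}. But $\rho_{n-j}=(r_{n-j}+1)/2$ with $r_{n-j}=re^{c(1-\eta)(n-j)}$, so when $j$ is close to $n$ and $r$ is near the threshold $1+c^{1/2-\ve}$ one has $\rho_{n-j}\approx(r+1)/2<1+c^{1/2-\ve}$. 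For such radii $\rho_{n-j}$ lies \emph{outside} the range over which $v_{n-1}$ is a supremum, so the substitution is not justified, and your recursion $v_n\le a+b\,v_{n-1}$ does not follow. For $\eta=1$ there is no push-out at all ($r_{n-j}=r$), so every $\rho_{n-j}=(r+1)/2$ is below threshold and the gap is uniform in $j$.

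This is precisely the obstruction the paper's proof is designed around. Instead of iterating in $n$, the paper fixes $n$ and $r$, uses monotonicity $\tn\Psi_{j-1}\tn_{p,\rho_{n-j}}\le\tn\Psi_{j-1}\tn_{p,\rho}$ with $\rho=(r+1)/2$, and obtains a recursion in \emph{radius}:
\[
\tn\Psi_n1_{\{n\le N_0\}}\tn_{p,r}\le\bar\d(r)\,\sup_{j\le n}\tn\Psi_{j-1}1_{\{j\le N_0\}}\tn_{p,\rho}+\d(r),
\]
with $\bar\d(r)\le c^{\ve/2}$ small. This is then iterated finitely many times, over the dyadic scales $r\ge1+2^kc^{1/2-\ve/2}$, with the crude distortion bound $\tn\Psi_n\tn_{p,r}\le 1/(r-1)$ as the base case $k=0$. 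After $k=\lceil1/\ve\rceil$ steps the factor $\bar\d(r)^k/(r-1)$ has been beaten down below $\d(r)$, and one then observes that $r\ge1+c^{1/2-\ve}$ implies $r\ge1+2^kc^{1/2-\ve/2}$ for $c$ small. The same scheme, with the obvious loss of push-out, handles $\eta=1$. Your proposal could be repaired by switching to this radius iteration; the ingredients you list (choice of large $p$, Gronwall area bound as seed, treatment of small $c$ versus bounded $c$) are all correct and used in the paper.
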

\begin{proof}
As before, constants referred to in the proof by the letter $C$ may change from line to line and are all assumed to lie in $[1,\infty)$. 
They may depend on $\L$, $\eta$, $T$, $\ve$, $\nu$ and $p$ but they do not depend on $c$, $n$, $\s$ and $r$.

We begin with a crude estimate which allows us to restrict further consideration to small values of $c$.
The function $e^{-cn}\Phi_n(z)$ is univalent on $\{|z|>1\}$, with $e^{-cn}\Phi_n(z)\sim z$ as $z\to\infty$. 
By  same distortion estimate used in Section \ref{sec:HL0proof}, for all $|z|=r>1$,
$$
|\tilde\Phi_n'(z)|
=|e^{-cn}\Phi_n'(z)-1|\le\frac1{r^2-1}
$$
and so
\begin{equation}\label{DIST}
\tn\Psi_n\tn_{p,r}
=r\tn\tilde\Phi_n'\tn_{p,r}
\le\frac 1{r-1}.
\end{equation}
It is straightforward to check that this implies the claimed estimates in the case where $c>1/C$, for any given constant
$C$ of the allowed dependence.
Hence it will suffice to consider the case where $c\le1/C$.

Consider first the case where $\eta<1$. 
Fix $T$, $\ve$, $p$ and $\nu$ as in the statement,
and assume that $c\le1/e$ and $r\ge1+c^{1/2-\ve/2}$ and $e^\s\ge1+c^{1/2-\ve}$ and $n\le T/c$. Set $\rho=(r+1)/2$. 
It will suffice to prove the result for $p$ large enough, so assume $p > 1+1/(2\ve )$. 

By the triangle inequality,
$$
\tn\Psi_n1_{\{n\le N_0\}}\tn_{p,r}\leq \tn\cM_n1_{\{n\le N_0\}}\tn_{p,r} + \tn\cW_n1_{\{n\le N_0\}}\tn_{p,r} + \tn\cR_n1_{\{n\le N_0\}}\tn_{p,r},
$$
where, by Lemmas \ref{lemma:mbound}, \ref{lemma:wbound} and \ref{lemma:rnbound},
\begin{equation*}
 \tn\cM_n1_{\{n\le N_0\}}\tn_{p,r} 
\le \frac{C\sqrt c}{r-1}
=\frac{C\sqrt c}r\left(\frac r{r-1}\right), 
\end{equation*}
$$
\tn\cW_n1_{\{n\le N_0\}}\tn_{p,r}
\le\frac{C\sqrt c}r
\bigg(
\sup_{j\le n}\tn\Psi_{j-1}1_{\{j\le N_0\}}\tn_{p,\rho}\left(\frac r{r-1}\right) 
+\sqrt c\left(\frac r{r-1}\right)^2 
\bigg),
$$
and
\begin{align*}
\tn\cR_n1_{\{n\le N_0\}}\tn_{p,r}
&\le\frac{C\d_0}r\left(\d_0+\sup_{j\le n}\tn\Psi_{j-1}1_{\{j\le N_0\}}\tn_{p,\rho}
+\sqrt c\left(\frac r{r-1}\right)\right)\left(1+\log\left(\frac r{r-1}\right)\right)^2\\
&\q\q+
C\sqrt c\sup_{j\le n}\tn\Psi_{j-1}1_{\{j\le N_0\}}\tn_{p,\rho}
\left(1+\log\left(\frac r{r-1}\right)\right).
\end{align*}
On combining the estimates above and substituting the chosen value of $\d_0$, we obtain, for all $r\ge1+c^{1/2-\ve/2}$,
\begin{equation}
\label{deltadeltabar}
\tn\Psi_n1_{\{n\le N_0\}}\tn_{p,r}\leq\bar\d(r)\sup_{j \leq n}\tn\Psi_{j-1}1_{\{j\le N_0\}}\tn_{p,\rho}+\d(r)
\end{equation}
where
$$
\bar\d(r)
=\frac Cr\bigg(\sqrt c\left(\frac r{r-1}\right) 
+\frac{c^{1/2-\nu}(\log\left(1/c\right))^2}{e^\s-1}\bigg)
+C\sqrt c\log(1/c)
$$
and
$$
\d(r)
=\frac Cr\left(\sqrt c\left(\frac r{r-1}\right)+\frac{c^{1-2\nu}(\log(1/c))^2}{(e^\s-1)^2}
 \right).
$$
Note that, for all $r\ge1+c^{1/2-\ve}$, we have
$$
\bar\d(r)\le Cc^{\ve}+Cc^{\ve-\nu}(\log(1/c))^2+C\sqrt c\log(1/c)\le c^{\ve/2}\le1 
$$
for all sufficiently small $c$. 
Similarly, for $r\ge1+c^{1/2-\ve/2}$, we have $\bar\d(r)\le1$ for all sufficiently small $c$. 
As noted above, it suffices to deal with the case where $c$ is sufficiently small.

A complication in the analysis is that the right hand side of the inequality \eqref{deltadeltabar} requires estimates of $\Psi_{j-1}(z)$ when $|z| = \rho$, but the left hand side only gives information about $\Psi_n(z)$ when $|z| = r > \rho$. Our approach is therefore to use the universal distortion estimate \eqref{DIST} to obtain an initial (very weak) bound and then recursively feed the bounds through the inequality. This generates stronger and stronger estimates, but at the cost of moving $r$ further away from 1. 
  
Set $C_0=1$ and for $k\ge0$ define recursively $C_{k+1}=2^{k+1}C_k+1$.
We will show that, for all $k\ge0$, all $r\ge1+2^kc^{1/2-\ve/2}$ and all $n\le T/c$,
\begin{equation}
\label{induchop}
\tn\Psi_n1_{\{n\le N_0\}}\tn_{p,r}
\le C_k\left(\frac{\bar\d(r)^k}{r-1}+\d(r)\right).
\end{equation}
The case $k=0$ is implied by \eqref{DIST}.
Suppose inductively that \eqref{induchop} holds for $k$ and that $r\ge1+2^{k+1}c^{1/2-\ve/2}$ and $n\le T/c$. 
Then $\rho=(r+1)/2\ge1+2^k c^{1/2-\ve/2}$ so, for all $j\le n$,
\[
\tn\Psi_j1_{\{j\le N_0\}}\tn_{p,\rho}
\le C_k\left(\frac{\bar\d(\rho)^k}{\rho-1}+\d(\rho)\right)\le2^{k+1}C_k\left( \frac{\bar\d(r)^k}{r-1}+\d(r)\right)
\]
where we used the inequalities $\d(\rho)\le2\d(r)$ and $\bar\d(\rho)\le2\bar\d(r)$.
Since $r\ge1+c^{1/2-\ve/2}$, we can substitute into \eqref{deltadeltabar} to obtain
\begin{align*}
\tn\Psi_n1_{\{n\le N_0\}}\tn_{p,r} 
&\le2^{k+1}C_k\left(\frac{\bar\d(r)^{k+1}}{r-1}+\bar\d(r)\d(r)\right)+\d(r)\\
&\le C_{k+1}\left(\frac{\bar\d(r)^{k+1}}{r-1}+\d(r)\right).
\end{align*}
Hence \eqref{induchop} holds for $k+1$ and the induction proceeds.

Choose now $k=\lceil1/\ve\rceil$. 
Then
$$
\frac{\bar\d(r)^k}{r-1}\le\frac{c^{\ve k/2}}{r-1}\le\frac{\sqrt c}{r-1}\le\d(r).
$$
For $c$ sufficiently small, we have $c^{\ve/2}\le 2^{-k/2}$ so, for all $r\ge1+c^{1/2-\ve}$, 
we have $r\ge1+2^kc^{1/2-\ve/2}$ and so
$$
\tn\Psi_n1_{\{n\le N_0\}}\tn_{p,r}
\le C_k\left(\frac{\bar\d(r)^k}{r-1}+\d(r)\right)
\le2C_k\d(r)
$$
giving a bound of the desired form \eqref{PDF}.

We turn to the case where $\eta=1$. 
Fix $T$, $\ve$, $p$ and $\nu$ as in the statement for $\eta=1$.
Assume that $c\le1/e$ and $n\le T/c$, 
and assume now that $r\ge1+c^{1/5}$ and $e^\s\ge1+c^{1/5-\ve}$. 
It will suffice to prove the result for $p$ sufficiently large. 
The argument follows the same pattern as the case where $\eta<1$, 
except for modifications necessary because of the different estimates in Lemmas \ref{lemma:mbound}, \ref{lemma:wbound} and \ref{lemma:rnbound} (and different choice of $\d_0$), which arose because $r_n=re^{c(1-\eta)n}=r$.

We obtain for $r\ge1+c^{1/5}$,
\begin{equation*}
\tn\Psi_n1_{\{n\le N_0\}}\tn_{p,r}
\le\bar\d(r)\sup_{j\le n}\tn\Psi_{j-1}1_{\{j\le N_0\}}\tn_{p,\rho}+\d(r)
\end{equation*}
where now
$$
\bar\d(r)=\frac Cr\bigg(\sqrt c\left(\frac r{r-1}\right)^{3/2}+\frac{c^{1/2-\nu}\log(1/c)}{(e^\s-1)^{3/2}}\left(\frac r{r-1}\right)\bigg)+C\sqrt c \left(\frac r{r-1}\right)
$$
and
$$
\d(r)=\frac Cr\bigg( \sqrt{c}\left(\frac r{r-1}\right)^{3/2}+\frac{c^{1-2\nu}\log(1/c)}{(e^\s-1)^3}\left(\frac r{r-1}\right) \bigg).
$$
Note that, 
for $r\ge1+c^{1/5-\ve}$, we have,
for all sufficiently small $c$
$$
\bar\d(r)
\le Cc^{1/5+3\ve/2}+Cc^{5\ve/2-\nu}\log(1/c)+Cc^{3/10+\ve}\le c^\ve\le1 
$$ 
Similarly, we have $\d(r)\le1$ whenever $r\ge1+c^{1/5}$, for all sufficiently small $c$.
We restrict to such $c$.
For $\rho=(r+1)/2$, we now have modified inequalities $\d(\rho)\le2^{3/2}\d(r)$ and $\bar\d(\rho)\le2^{3/2}\bar\d(r)$.
Set $C_0=1$ and for $k\ge0$ define now recursively $C_{k+1}=2^{3k/2+1}C_k+1$.
Then, by an analogous inductive argument, we obtain, for all $k\ge0$, all $n\le T/c$ and all $r\ge1+2^kc^{1/5}$,
\begin{equation*}
\label{induchop1}
\tn\Psi_n1_{\{n\le N_0\}}\tn_{p,r}
\le C_k\left(\frac{\bar\d(r)^k}{r-1}+\d(r)\right).
\end{equation*}
Choose now $k=\lceil 1/\ve\rceil$ and assume that $r\ge1+c^{1/5-\ve}$.
Then
\[ 
\frac{\bar\d(r)^k}{(r-1)}\le\frac{c^{\ve k}}{r-1}\le\frac c{r-1}\le\d(r). 
\] 
and, for $c$ sufficiently small, we have $c^\ve\le2^{-k}$, so $r\ge1+2^kc^{1/5}$ and so
$$
\tn\Psi_n1_{\{n\le N_0\}}\tn_{p,r}\le2C_k\d(r)
$$
which is a bound of the required form \eqref{PDG}.
\end{proof}

\subsection{Spatially-uniform high-probability estimates on the differentiated fluctuations}
\label{sec:highprob}
We now use the results from the previous section to obtain uniform estimates on $\Psi_n(z)$.
\begin{proposition}
\label{UDFA}
For all $\eta\in[0,1)$, $\ve\in(0,1/2)$, $\nu\in(0,\ve /4)$, $m\in\N$ and $T\in(0,\infty)$, 
there is a constant $C=C(\L,\eta,\ve,\nu,m,T)<\infty$ with the following properties.
For all $c\in(0,1]$ and all $e^\s\ge1+c^{1/2-\ve}$,
there is an event $\O_0$ of probability exceeding $1-c^m$ on which,
for all $n\le T/c$ and all $|z|=r\ge1+c^{1/2-\ve}$,
\begin{equation}
\label{SUPE}
|\Psi_n(z)|
\le\frac Cr\left(c^{1/2-\nu}\left(\frac r{r-1}\right)+\frac{c^{1-4\nu}}{(e^\s-1)^2}
 \right).
\end{equation}
Moreover, for $c\le1/C$, we have $\O_0\sse\{n\le N_0\}$ for all $n\le T/c$, 
where $N_0$ is given by \eqref{NO} with $\d_0=c^{1/2-\nu}/(e^\s-1)$. 

For $\eta=1$, $\ve\in(0,1/5)$, $\nu\in(0,\ve /2)$, $m\in\N$ and $T\in(0,\infty)$, 
there is a constant $C=C(\L,\ve,\nu,m,T)<\infty$ with the following property.
For all $c\in(0,1]$ and all $e^\s\ge1+c^{1/5-\ve}$,
there is an event $\O_0$ of probability exceeding $1-c^m$ on which,
for all $n\le T/c$ and all $|z|=r\ge1+c^{1/5-\ve}$,
$$
|\Psi_n(z)|\le\frac Cr\left(c^{1/2-\nu}\left(\frac r{r-1}\right)^{3/2}
+\frac{c^{1-4\nu}}{(e^\s-1)^3}\left(\frac r{r-1}\right) \right).
$$
Morover, for $c\le1/C$, we have $\O_0\sse\{n\le N_0\}$ for all $n\le T/c$, 
where $N_0$ is given by \eqref{NO} with $\d_0=c^{1/2-\nu}/(e^\s-1)^{3/2}$. 
\end{proposition}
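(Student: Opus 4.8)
The plan is to promote the $L^p(\PP)$ estimates of Proposition~\ref{DFA} to estimates that are uniform in the space variable and in time and hold with high probability, and then to close a bootstrap: the resulting bound, evaluated at radius $e^\s$, turns out to be strictly smaller than $\d_0$, which forces $n\le N_0$ and lets us drop the indicator $1_{\{n\le N_0\}}$. I treat $\eta\in[0,1)$ throughout; the case $\eta=1$ is identical with \eqref{PDG} in place of \eqref{PDF} and the modified $\d_0$ of Proposition~\ref{DFA}.

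First I would dispose of large $c$ using the deterministic distortion estimate \eqref{DIST}: $\tn\Psi_n\tn_{p,r}\le1/(r-1)=\tfrac1r\cdot\tfrac r{r-1}$ for every $n$, so if $c\ge1/C$ the claimed bound holds on the whole probability space once $C$ is large, and the \emph{moreover} clause is vacuous. So assume $c\le1/C$ and fix $p$, large, to be chosen at the end. Applying Proposition~\ref{DFA} (with a slightly smaller value of the exponent parameter, which the hypothesis $\nu<\ve/4$ permits, so that the range of admissible radii extends below $1+c^{1/2-\ve}$) bounds $\tn\Psi_n1_{\{n\le N_0\}}\tn_{p,\rho}$ for all $n\le T/c$ and all $\rho$ with $\rho-1\ge\tfrac12 c^{1/2-\ve}$ by $\tfrac C\rho\big(\sqrt c\,\tfrac\rho{\rho-1}+\tfrac{c^{1-3\nu}}{(e^\s-1)^2}\big)$. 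For each such $n$ and each radius $\rho$ on a finite geometric grid from $1+\tfrac12 c^{1/2-\ve}$ to $2$ (only $O(\log(1/c))$ of them), Markov's inequality applied to $\tn\Psi_n1_{\{n\le N_0\}}\tn_{p,\rho}^p=\E\big(\|\Psi_n1_{\{n\le N_0\}}\|_{p,\rho}^p\big)$ bounds the probability that $\|\Psi_n1_{\{n\le N_0\}}\|_{p,\rho}$ exceeds $K$ times the above by $K^{-p}$; a union bound over the $O(c^{-1})$ values of $n$ and the $O(\log(1/c))$ radii, taking $K$ a suitable power $c^{-o(1)}$, produces an event $\O_0$ with $\PP(\O_0)\ge1-c^m$ on which all of these circle-$L^p$ bounds hold simultaneously. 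On $\O_0$, a chaining argument — passing from the circle-$L^p$ norm at a grid radius to a sup norm slightly further out via \eqref{PINF} (which costs only a factor $c^{-o(1)}$, since the gap bridged is $\gtrsim c^{1/2-\ve}$), using that $r\mapsto\|\Psi_n\|_{\infty,r}$ is non-increasing, and, for $r\ge2$, that $r\mapsto r\|\Psi_n\|_{\infty,r}$ is non-increasing (valid because $z\Psi_n(z)$ is analytic on $\{|z|>1\}$ and bounded at $\infty$) — upgrades these to the pointwise bound $1_{\{n\le N_0\}}|\Psi_n(z)|\le\tfrac Cr\big(c^{1/2-\nu}\tfrac r{r-1}+\tfrac{c^{1-4\nu}}{(e^\s-1)^2}\big)$ for all $|z|=r\ge1+c^{1/2-\ve}$ and all $n\le T/c$. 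The various $c^{-o(1)}$ losses (from $K$ and from \eqref{PINF}) are absorbed by the slack between the exponents $(\tfrac12,\,1-3\nu)$ of Proposition~\ref{DFA} and the exponents $(\tfrac12-\nu,\,1-4\nu)$ demanded here, which is exactly where $\nu<\ve/4$ is used.

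Finally I would remove the indicator by a bootstrap. Suppose, on $\O_0$, that $n>N_0$ for some $n\le T/c$; then $N_0\le T/c$ and, by \eqref{NO}, $\|\tilde\Phi'_{N_0}\|_{\infty,e^\s}>\d_0=c^{1/2-\nu}/(e^\s-1)$. Applying the pointwise bound at time $N_0$ and radius $e^\s$ (legitimate since $1_{\{N_0\le N_0\}}=1$ and $e^\s\ge1+c^{1/2-\ve}$) and using $\|\Psi_{N_0}\|_{\infty,e^\s}=e^\s\|\tilde\Phi'_{N_0}\|_{\infty,e^\s}$ would give $e^\s\d_0<\|\Psi_{N_0}\|_{\infty,e^\s}$. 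But keeping one more step in the estimate, $\|\Psi_{N_0}\|_{\infty,e^\s}1_{\{N_0\le N_0\}}$ is at most $c^{-\mu}$ times the Proposition~\ref{DFA} bound at $\rho=(e^\s+1)/2$, with $\mu=\mu(p,m)\to0$ as $p\to\infty$; comparing this with $e^\s\d_0$, and using $e^\s-1\ge c^{1/2-\ve}$ for the second term, one finds it to be at most $\big(Cc^{\nu-\mu}+Cc^{\ve-2\nu-\mu}\big)e^\s\d_0$, where both exponents are positive once $\mu<\nu$ and $\nu<\ve/4$. So for $c\le1/C$ this is strictly less than $e^\s\d_0$, a contradiction. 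Hence on $\O_0$ we have $n\le N_0$ for all $n\le T/c$, so the indicator equals $1$ there and the bound holds without it, and the \emph{moreover} clause follows. The hard part is precisely this quantitative matching: one must verify that, after the polynomial-in-$c$ losses from Markov's inequality, the union bound over $\sim c^{-1}\log(1/c)$ events, and the analytic interior estimate \eqref{PINF}, the bound at radius $e^\s$ still falls below $\d_0e^\s$ by a positive power of $c$ — which is why $\nu$ must be taken strictly below $\ve/4$ (resp. below $\ve/2$ when $\eta=1$) rather than merely below $\ve/2$ (resp. $3\ve/2$) as in Proposition~\ref{DFA}.
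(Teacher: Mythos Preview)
Your proposal is correct and follows essentially the same approach as the paper: a geometric grid of radii, Chebyshev/Markov plus a union bound over the $O(c^{-1}\log(1/c))$ pairs $(n,k)$ to control the circle-$L^p$ norms, the interior estimate \eqref{PINF} together with the monotonicity of $r\mapsto r\|\Psi_n\|_{\infty,r}$ to pass to the sup norm, and finally the comparison at radius $e^\s$ with $\d_0$ to close the bootstrap and remove the indicator. The paper phrases the last step as a direct verification that the bound at $e^\s$ is $\le\d_0$ for $n\le N_0\wedge N$ (forcing $N_0>N$), while you phrase it by contradiction at time $N_0$, but these are the same argument.
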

\begin{proof}
We will give details for the case $\eta\in[0,1)$. 
The minor modifications needed for the case $\eta=1$ are left to the reader.
Fix $\eta,\ve,\nu,m$ and $T$ as in the statement.
It will suffice to consider the case where  $e^\s\ge1+2c^{1/2-\ve}$, and to find an event $\O_0$ 
of probability exceeding $1-c^m$ on which \eqref{SUPE} holds whenever $r\ge1+2c^{1/2-\ve}$ and $n\le T/c$.
Set
$$
K=\min\{k\ge1:2^{k}c^{1/2-\ve}\ge1\},\q
N=\lfloor T/c\rfloor.
$$ 
Then $K\le\lfloor\log(1/c)\rfloor+1$. 
For $k=1,\dots,K$, set 
$$
r(k)=1+2^{k}c^{1/2-\ve},\q\rho(k)=\frac{r (k)+1}2.
$$
Then $\rho(k)\ge1+c^{1/2-\ve }$ and $r(K)\in[2,4]$. 
Choose  $p\ge \max \{ 1+1/(2\ve ) ; (m+2)/\nu \}$ even integer,  and set
$$
R=\left(KTc^{-m}\right)^{1/p}.
$$
By Proposition \ref{DFA}, there is a constant $C=C(\L,\eta,\ve,\nu,p,T)<\infty$ such that, for all $n\le T/c$,
\begin{equation*}
\tn\Psi_n1_{\{n\le N_0\}}\tn_{p,\rho(k)}\le\mu_k
\end{equation*}
where $N_0$ is defined as in the statement and
$$
\mu_k=\frac C{r(k)}\left(\sqrt c\left(\frac{r(k)}{r(k)-1}\right)+\frac{c^{1-3\nu}}{(e^\s-1)^2} 
\right) .
$$
Set $\l=Rc^{-1/p}$ and consider the event
\begin{equation*}
\O_0=\bigcap_{n=1}^N\bigcap_{k=1}^K \{\|\Psi_n\|_{p,\rho(k)}1_{\{n\le N_0\}}\le\l\mu_k\}.
\end{equation*}
By Chebyshev's inequality,
$$
\PP(\|\Psi_n\|_{p,\rho(k)}1_{\{n\le N_0\}}>\l\mu_k)\le\l^{-p}=cR^{-p}
$$
so
$$
\PP(\O_0^c)\le KTR^{-p}=c^m.
$$ 
Fix $r\ge1+2c^{1/2-\ve}$. 
Then $r(k)\le r<r(k+1)$ for some $k\in\{1,\dots,K\}$, where we set $r(K+1)=\infty$.
Note that $z\Psi_n(z)$ is a bounded analytic function on $\{|z|>\rho(1)\}$.
We use the inequality \eqref{PINF} to see that, on the event $\O_0$, for $n\le N_0\wedge N$,
$$
r\|\Psi_n\|_{\infty,r}
\le r(k)\|\Psi_n\|_{\infty,r(k)}
\le\left(\frac{r(k)+1}{r(k)-1}\right)^{1/p}r(k)\|\Psi_n\|_{p,\rho(k)}
\le(2c^{-1/2})^{1/p}r(k)\l\mu_k
$$
so, using that $r(k) \ge (r+1) /2$, we get 
$$
\|\Psi_n\|_{\infty,r}
\le(2c^{-1/2})^{1/p}\l\mu_k
\le\frac{\g_k}{2r}\left(c^{1/2-\nu}\left(\frac r{r-1}\right)+\frac{c^{1-4\nu}}{(e^\s-1)^2}\right)
$$
where
$$
\g_k=8C(2\log(1/c)Tc^{-m-1-1/2+p\nu})^{1/p}.
$$
By our choice of $p$, we have $\g_k\le1$ for all sufficiently small $c$.
We can restrict to such $c$, since the desired estimate follows from the distortion inequality \eqref{DIST} otherwise.
Then, on the event $\O_0$, for $n\le N_0\wedge N$, 
$$
\|\Psi_n\|_{\infty,r}
\le\frac1{2r}\left(c^{1/2-\nu}\left(\frac r{r-1}\right)+\frac{c^{1-4\nu}}{(e^\s-1)^2}\right)
$$
and in particular, since $e^\s\ge1+c^{1/2-\ve}$ and $\nu < \ve /4$, we have
$$
\|\tilde\Phi_n'\|_{\infty,e^\s}
\le\|\Psi_n\|_{\infty,e^\s}
\le\frac1{2e^\s}\left(c^{1/2-\nu}\left(\frac{e^\s}{e^\s-1}\right)+\frac{c^{1-4\nu}}{(e^\s-1)^2}\right)
\le\frac{c^{1/2-\nu}}{e^\s-1}=\d_0
$$
which forces $N_0>N$ on $\O_0$.
\end{proof}

\def\j{
\bb
\subsection{Details for $\eta=1$ omitted from the proof of Proposition \ref{UDFA}}
Fix now $\eta=1$, and take $\eta,\ve,\nu,m$ and $T$ as in the statement. 
It will suffice to consider the case where $\nu<5\ve/6$ and $e^\s\ge1+2c^{1/5-\ve}$, and to find an event $\O_0$ 
of probability exceeding $1-c^m$ on which the claimed bound holds whenever $r\ge1+2c^{1/5-\ve}$ and $n\le T/c$.
Set 
$$
K=\min\{k\ge1:2^{k}c^{1/5-\ve}\ge1\},\q
N=\lfloor T/c\rfloor.
$$ 
Then $K\le\lfloor\log(1/c)\rfloor+1$. 
For $k=1,\dots,K$, set 
$$
r(k)1+2^{k}c^{1/5-\ve },\q \rho(k)=\frac{r(k)+1}2.
$$
Then $\rho(k)\ge1+c^{1/5-\ve }$ and $r(K)\in[2,4]$. 
Choose an even integer $p\ge(m+2)/\nu$ and set
$$
R=\left(KTc^{-m}\right)^{1/p}.
$$
By Proposition \ref{DFA}, there is a constant $C<\infty$, depending on $\eta,\ve,\nu,T$ and $p$ such that, for all $n\ge0$,
\begin{equation}\label{PSS1}
\tn\Psi_n1_{\{n\le N_0\}}\tn_{p,\rho(k)} 
\le\frac{C\sqrt c}{r(k)-1}\left(\Big(\frac{r(k)}{r(k)-1}\Big)^{1/2} 
+\frac{c^{1/2-3\nu}}{(e^\s-1)^3}\right) , 
\end{equation}
where
$$
N_0=\min\{n\ge0:\|\tilde\Phi'_n\|_{\infty,e^\s}>\d_0\},\q
\d_0=\frac{c^{1/2-\nu}}{(e^\s-1)^{3/2}}.
$$
Consider the event
\begin{equation*}
\O_0
=
\bigcap_{n=1}^N
\bigcap_{k=1}^K
\{\|\Psi_n\|_{p,\rho(k)}1_{\{n\le N_0\}}\le\a_k\}
\end{equation*}
where
$$
\a_k=\frac{C\sqrt c}{r(k)-1}\left(\Big(\frac{r(k)}{r(k)-1}\Big)^{1/2} 
+\frac{c^{1/2-3\nu}}{(e^\s-1)^3}\right)\l,\q
\l=Rc^{-1/p}
$$
for $C$ as in \eqref{PSS1}. 
By Chebyshev's inequality,
$$
\PP(\|\Psi_n\|_{p,\rho(k)}1_{\{n\le N_0\}}>\a_k)\le\l^{-p}=cR^{-p}
$$
so
$$
\PP(\O_0^c)\le KTR^{-p}=c^m.
$$ 
Fix $r\ge1+2c^{1/5-\ve}$. 
Then $r(k)\le r<r(k+1)$ for some $k\in\{1,\dots,K\}$, where we set $r(K+1)=\infty$.
Note that $z\Psi_n(z)$ is a bounded analytic function on $\{|z|>\rho_1\}$.
On the event $\O_0$, for all $n\le N_0\wedge N$,
\begin{align*}
r\|\Psi_n\|_{\infty,r}
&\le r(k)\|\Psi_n\|_{\infty,r(k)}
\le\left(\frac{r(k)+1}{r(k)-1}\right)^{1/p}r(k)\|\Psi_n\|_{p,\rho(k)}
\le(6c^{-1/5+\ve})^{1/p}r(k)\a_k\\
&\le6^{1/p}\cdot 8C(KTc^{-m-1-1/5+\ve})^{1/p} 
 \frac{\sqrt cr}{r-1}\left(\Big(\frac r{r-1}\Big)^{1/2} 
 +\frac{c^{1/2-3\nu}}{(e^\s -1)^3}\right), 
\end{align*}
for $C$ as in \eqref{PSS1}. 
We can choose $c_0\in(0,1]$ so that
$$
6(16C)^pT(1+\log(1/c_0))c_0^{p\nu-m-1-1/5+\ve}=1.
$$
For $c\in(c_0,1]$, the desired estimates follow from the distortion inequality \eqref{DIST}.
On the other hand, for $c\in(0,c_0]$, on the event $\O_0$, for all $n\le N_0 \wedge N $ and all $r\ge1+2c^{1/5-\ve }$,
\begin{equation}
\label{DES1}
\|\Psi_n\|_{\infty,r}
\le\frac{c^{1/2-\nu}}{ 2(r-1) }\left( \Big( \frac r{r-1}\Big)^{1/2} +\frac{c^{1/2-3\nu}}{(e^\s -1)^3}\right).
\end{equation}
Since $e^\s\ge 1+ 2c^{1/5-\ve}$, this implies in particular that, on $\O_0$, for $n=N_0 \wedge N $,
$$
\|\tilde\Phi_n'\|_{\infty,e^\s}
= e^{-\s} \|\Psi_n\|_{\infty,e^\s}
\le\frac{c^{1/2-\nu}}{(e^\s -1)^{3/2} } \left( \frac12 + \frac{c^{1/2-3\nu}}{2(e^\s -1)^{5/2}}\right)
\leq \d_0 , 
$$
which forces $N_0>N$, so \eqref{DES1} holds for all $n\le N$.
\eb
}\jj

\subsection{$L^p$-estimates on the fluctuations}
In this section we prove a result analogous to Proposition \ref{DFA} for the undifferentiated dynamics. 
This allows us to prove Theorem \ref{UFA}. 

\begin{proposition}
\label{DFAun}
For all $\eta\in[0,1)$, $T\in(0,\infty)$, $\ve\in(0,1/2)$, $\nu\in(0,\ve /4)$ and $p  \in [2,\infty ) $, there is a constant $C=C(\L,\eta,T,\ve,\nu,p)\in[1,\infty)$ with the following property.
For all $c\in(0,1]$, all $r,e^\s\ge1+c^{1/2-\ve}$ and all $n\le T/c$, we have
\begin{equation}\label{PDFun}
\tn(\tilde\Phi_n-\tilde\Phi_n(\infty))1_{\{n\le N_0\}}\tn_{p,r}
\le\frac Cr\bigg(\sqrt c\left(1+\log\left(\frac r{r-1}\right)\right)^{1/2}+\frac{c^{1-3\nu}}{(e^\s-1)^2}\bigg)
\end{equation}
and
\begin{equation}
\label{PDFuni}
\|\tilde\Phi_n(\infty)1_{\{n\le N_0\}}\|_p\le C\left(\sqrt c+\frac{c^{1-2\nu}}{(e^\s-1)^2}\right)
\end{equation}
where $N_0$ is given by \eqref{NO} with $\d_0=c^{1/2-\nu}/(e^\s-1)$.

Moreover, in the case $\eta=1$, for all $T\in(0,\infty)$, $\ve\in(0,1/5)$, $\nu\in(0,\ve/2)$ and $p \in [2,\infty ) $, 
there is a constant $C=C(\L,T,\ve,\nu,p)\in[1,\infty)$ with the following property.
For all $c\in(0,1]$, all $r,e^\s\ge1+c^{1/5-\ve}$ and all $n\le T/c$, we have
\begin{equation}\label{PDGun}
\tn(\tilde\Phi_n-\tilde\Phi_n(\infty))1_{\{n\le N_0\}}\tn_{p,r}
\le\frac {C }{r} \left( \sqrt{c} \left( \frac r{r-1}\right)^{1/2} 
+\frac{c^{1-3\nu}}{(e^\s-1)^3}\right)
\end{equation}
and
\begin{equation}\label{PDGuni}
\|\tilde\Phi_n(\infty)1_{\{n\le N_0\}}\|_{p}
\le C\bigg(\sqrt c+\frac{c^{1-2\nu}}{(e^\s-1)^3}\bigg)
\end{equation}
where $N_0$ is given by \eqref{NO} with $\d_0=c^{1/2-\nu}/(e^\s-1)^{3/2}$.
\end{proposition}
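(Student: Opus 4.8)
The plan is to run the analysis of Section~\ref{sec:estdecom} and Proposition~\ref{DFA} on the undifferentiated decomposition \eqref{barDECOMP}, $\tilde\Phi_n=\tilde\cM_n+\tilde\cW_n+\tilde\cR_n$, rather than on its differentiated version \eqref{DECOMP}. Dropping the outer operator $D$ removes one factor $r/(r-1)$ from every singularity estimate of Section~\ref{sec:estdecom} — this is precisely the source of the improvement near $r=1$ — and for the two terms $\tilde\cW_n$, $\tilde\cR_n$ (which carry $\Psi_{j-1}$ through the Taylor expansions behind $W_j$ and $R_j$) there is no need to bootstrap: one simply substitutes the bounds already furnished by Proposition~\ref{DFA}, legitimately since $\{n\le N_0\}\subseteq\{j\le N_0\}$ for $j\le n$ and the relevant radii ($\rho_{n-j}$, resp.\ $r_{n-j}$) are $\ge 1+c^{1/2-\ve/2}$, so the internal form of that proposition applies after a harmless adjustment of $\ve$. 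As in Proposition~\ref{DFA} I would first record the crude bound from Gronwall's area theorem: since $e^{-cn}\Phi_n$ is univalent with $\sum_{k\ge1}k|a_n(k)|^2\le1$, one gets $|\tilde\Phi_n(z)-\tilde\Phi_n(\infty)|\le(1+\log(r/(r-1)))^{1/2}$ and $|\tilde\Phi_n(\infty)|\le2$, which disposes of $c\ge1/C$, so we may take $c$ small. A useful structural remark is that $p_0(0)=1$, so $P_0$ fixes the constant Laurent coefficient; hence $\tilde\cM_n(\infty)=e^{-cn}\sum_{j=1}^nM_j(\infty)$, $\tilde\cW_n(\infty)=e^{-cn}\sum_{j=1}^nW_j(\infty)$, and likewise for $\tilde\cR_n$, and moreover $W_j(\infty)=0$ because $\tilde q$, $l$ and $\Psi_{j-1}$ all vanish at $\infty$, so each of $u^0_j,u^1_j,u^2_j$ does.

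For the value at infinity, $\tilde\Phi_n(\infty)1_{\{n\le N_0\}}=e^{-cn}\sum_{j=1}^n(M_j(\infty)+R_j(\infty))1_{\{n\le N_0\}}$. Here $M_j(\infty)=2c\b e^{cj}(e^{i\Th_j}-\fint_0^{2\pi}e^{i\th}h_j(\th)\,d\th)$ is a bounded martingale difference of size $O(ce^{cj})$, so Burkholder (Theorem~\ref{thB}), with $\E(|M_j(\infty)|^2\mid\cF_{j-1})\le Cc^2e^{2cj}$ on $\{j\le N_0\}$ (using $h_j\le3$) and $\sum_{j\le n}e^{-2c(n-j)}\le C/c$, gives a contribution of order $\sqrt c$; and by \eqref{R7} we have $|R_j(\infty)|=|\ve_8(\infty)|\le Cce^{cn}\d_0^2$ on $\{j\le N_0\}$, so $\sum_{j\le n}Cce^{-c(n-j)}\d_0^2\le C\d_0^2$. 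With $\d_0^2=c^{1-2\nu}/(e^\s-1)^2$ this is \eqref{PDFuni}, and \eqref{PDGuni} is identical with the $\eta=1$ value of $\d_0$.

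The main term for $\tilde\Phi_n-\tilde\Phi_n(\infty)$ is $\tilde\cM_n-\tilde\cM_n(\infty)$, which I would estimate exactly as in Section~\ref{pmartsec} with $DM_j$ replaced by $M_j-M_j(\infty)$. Since $m_j(\th,z)-m_j(\th,\infty)=2c\b e^{cj}e^{i\th}/(ze^{-i\th}-1)$, one has $\fint_0^{2\pi}|m_j(\th,z)-m_j(\th,\infty)|^2\,d\th=4c^2|\b|^2e^{2cj}/(r^2-1)$ (in place of the factor $r/(r-1)^3$ occurring for $Dm_j$), whence on $\{j\le N_0\}$, $\E(|P_0^{n-j}(M_j-M_j(\infty))(z)|^2\mid\cF_{j-1})\le 12c^2|\b|^2e^{2cj}/(r_{n-j}^2-1)$ for $|z|=r$, while the maximal-increment term is controlled via $\|m_j(0,\cdot)-m_j(0,\infty)\|_{p,r}^p\le C(ce^{cj})^p/(r(r-1)^{p-1})$ as at the end of Section~\ref{pmartsec}. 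Burkholder then gives $\tn(\tilde\cM_n-\tilde\cM_n(\infty))1_{\{n\le N_0\}}\tn_{p,r}^2\le Cc^2\sum_{\t=0}^{n-1}e^{-2c\t}/(r_\t^2-1)$ plus a lower-order $\Delta^*$-contribution, where $r_\t=re^{c(1-\eta)\t}$; the integral comparison $\sum_\t e^{-2c\t}/(r_\t^2-1)\le (C/c)\,r^{-2}(1+\log(r/(r-1)))$ for $\eta<1$, respectively $\le(C/c)\,(r^2-1)^{-1}$ for $\eta=1$ (where $r_\t=r$), yields the claimed forms $\frac Cr\sqrt c(1+\log(r/(r-1)))^{1/2}$ and $\frac Cr\sqrt c(r/(r-1))^{1/2}$ once $p$ is large, the $\Delta^*$-term being absorbed using $r-1\ge c^{1/2-\ve}$. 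For $\tilde\cW_n$ and $\tilde\cR_n$ I would re-run the derivations of \eqref{MNESTEG} and \eqref{RNEST} with the outer $D$ deleted (so each $r/(r-1)$ coming from \eqref{DEST} disappears), then substitute the bounds on $\tn\Psi_{j-1}1_{\{j\le N_0\}}\tn_{p,\cdot}$ supplied by Proposition~\ref{DFA}, and use $\sum_{j\le n}ce^{-c(n-j)}\le C$ together with $\d_0\sqrt c\le c^{2\nu}c^{1-3\nu}/(e^\s-1)$ (and $\d_0^2\le c^{1-3\nu}/(e^\s-1)^2$) to see that all resulting terms are dominated by $\frac Cr(\sqrt c(1+\log(r/(r-1)))^{1/2}+c^{1-3\nu}/(e^\s-1)^2)$ for $\eta<1$, and by the corresponding expression for $\eta=1$. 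Summing the three contributions gives \eqref{PDFun} and \eqref{PDGun}.

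I expect the main obstacle to be pinning down the exact $r$-dependence of the $\tilde\cM_n$ estimate. Unlike in Proposition~\ref{DFA}, where the singularity was a clean power of $r/(r-1)$, here the martingale variance carries only $1/(r_\t^2-1)$, and the stated $\log^{1/2}$ (for $\eta<1$), resp.\ $(r/(r-1))^{1/2}$ (for $\eta=1$), behaviour emerges only after the $\eta$-dependent summation over $j$; one must verify carefully that $\sum_{\t\ge0}e^{-2c\t}/(r_\t^2-1)$ has the claimed form (the worst contribution coming from $\t$ small), that the Burkholder maximal-increment term does not spoil it in the admissible range of $r$, and that the $\tilde\cW_n$, $\tilde\cR_n$ contributions, once the Proposition~\ref{DFA} bounds are inserted, stay within the same budget — in particular that the extra logarithmic factors generated in the remainder estimate are harmless under the hypotheses $\nu<\ve/4$ and $e^\s-1\ge c^{1/2-\ve}$ (resp.\ $c^{1/5-\ve}$).
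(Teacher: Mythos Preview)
Your proposal is correct and follows essentially the same approach as the paper's own proof: drop the outer $D$ from the estimates of Section~\ref{sec:estdecom}, separate off the constant Laurent coefficient (your observation that $p_0(0)=1$ and $W_j(\infty)=0$ is exactly what underlies the paper's phrase ``considering separately the constant term''), and feed the bounds on $\Psi_{j-1}$ from Proposition~\ref{DFA} into the $\tilde\cW_n$ and $\tilde\cR_n$ terms rather than bootstrapping. Your identification of the key calculation --- that the undifferentiated martingale variance carries $\sum_\tau e^{-2c\tau}/(r_\tau^2-1)$, producing the $\log^{1/2}$ factor for $\eta<1$ via the same integral comparison used for the remainder in Proposition~\ref{DFA} --- matches the paper precisely, and your treatment of $\tilde\Phi_n(\infty)$ is the same as the paper's.
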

\begin{proof}
Let us first consider $\eta\in[0,1)$. It suffices to prove the result for $p> 1+1/(2\ve )$.  
We modify the estimates leading to \eqref{MNEST}, 
by deleting $D$ and $k^2$ and considering separately the constant term of the Laurent expansion, to obtain
\begin{equation*}
\label{MNESTBB}
\begin{split} 
\tn(\tilde\cM_n-\tilde\cM_n(\infty))1_{\{n\le N_0\}}\tn_{p,r}^2 
& \le\frac{Cc^2}{r^2}\sum_{j=1}^ne^{-2c(n-j)}\left(\frac{r_{n-j}}{r_{n-j}-1}\right)
+ \frac{C c^{2-2/p}}{r^2} \left( \frac{r}{r-1} \right)^{2-2/p} 
\\ & \le\frac{Cc}{r^2}\left(1+\log\left(\frac r{r-1}\right)\right)
\end{split} 
\end{equation*}
and
$$
\|\tilde\cM_n(\infty)1_{\{n\le N_0\}}\|_p^2\le Cc^2\sum_{j=1}^ne^{-2c(n-j)}\le Cc.
$$
Similarly, for the second martingale term we obtain
\begin{align*}
\notag
\tn\tilde\cW_n1_{\{n\le N_0\}}\tn_{p,r}^2
&\le\frac{Cc^2}{r^2}\sum_{j=1}^ne^{-2c(n-j)}
\bigg(\tn\Psi_{j-1}1_{\{j\le N_0\}}\tn_{p,\rho_{n-j}}^2\left(\frac{r_{n-j}}{r_{n-j}-1}\right)
+c\left(\frac{r_{n-j}}{r_{n-j}-1}\right)^3\bigg)\\
& \quad + \frac{C e^{2c(n-1)} c^{3-2/p} r^{2-2/p} }{(r-1)^{4-2/p} } + 
\frac{C e^{2c(n-1)} c^{2-2/p} r^{2-2/p}}{r^2 (r-1)^{2-2/p}} 
\sup_{j\leq n } \tn \Psi_{j-1}1_{\{j\le N_0\}}\tn_{p,\rho_{n-j}}^2 
\\ &\le \frac{Cc^2}{r^2}\left(\frac r{r-1}\right)^2 
+\frac{Cc^{3-7\nu}}{r^4(e^\s-1)^4} , 
\end{align*}
where we used the bound on $\Psi_n$ from Proposition \ref{DFA}.  
Finally, for the remainder term, we find  
\begin{align*}
\notag
&\tn(\tilde\cR_n-\tilde\cR_n(\infty))1_{\{n\le N_0\}}\tn_{p,r}
\le e^{-cn}\sum_{j=1}^n\tn P_0^{n-j}(R_j-R_j(\infty ))1_{\{j\le N_0\}}\tn_{p,r_{n-j}}\\
\notag
&\q\le\frac{Cc\d_0}r
\sum_{j=1}^n
e^{-c(n-j)}
\left(\d_0+\tn\Psi_{j-1}1_{\{j\le N_0\}}\tn_{p,\rho_{n-j}}  
+ \sqrt c\left(\frac{r_{n-j}}{r_{n-j}-1}\right)\right)
\left(1+\log\Big(\frac{r_{n-j}}{r_{n-j}-1}\Big)\right)\\
&\q\q\q\q+Cc^{3/2}
\sum_{j=1}^n
e^{-c(n-j)}
\tn\Psi_{j-1}1_{\{j\le N_0\}}\tn_{p,\rho_{n-j}}\\
\label{RNESTBB}
&\q\q\le\frac{Cc^{1-3\nu}}{r(e^\s-1)^2}. 
\end{align*}
and
$$
\|\tilde\cR_n(\infty)1_{\{n\le N_0\}}\|_p
\le\frac{Cc\d_0^2}{(e^\s-1)^2}\sum_{j=1}^ne^{-c(n-j)}
\le\frac{Cc^{1-2\nu}}{(e^\s-1)^2}.
$$
On assembling these bounds, and simplifying using our constraints on $r$ and $\s$, we obtain \eqref{PDFun} and \eqref{PDFuni}. 

As in the proof of Proposition \ref{DFA}, in the case $\eta=1$, we do not benefit from the push-out of $r_n=re^{c(1-\eta)n}$,
and the bound on $\Psi_n$ is weaker.
After some straightforward modifications, for $p$ sufficiently large we obtain
\begin{align*} 
\tn(\tilde\cM_n-\tilde\cM_n(\infty))1_{\{n\le N_0\}}\tn_{p,r}^2 
&\le\frac{Cc}{r^2}\left(\frac r{r-1}\right),\\
\tn\tilde\cW_n1_{\{n\le N_0\}}\tn_{p,r}^2 
&\le\frac{Cc^2}{r^2}\left(\frac r{r-1}\right)^4
+\frac{Cc^{3-6\nu}}{r^4(e^\s-1)^6}\left(\frac r{r-1}\right)^3,  \\ 
\tn(\tilde\cR_n-\tilde\cR_n(\infty))1_{\{n\le N_0\}}\tn_{p,r} 
&\le\frac{Cc^{1-3\nu}}r
\left(\frac1{(e^\s-1)^3}
+\frac1{(e^\s-1)^{3/2}}
\left(\frac r{r-1}\right)^{3/2}\right).
\end{align*}  

On assembling these bounds, and simplifying using our constraints on $r$ and $\s$, we obtain \eqref{PDGun}.
Similarly
\begin{align*}
\| \tilde\cM_n(\infty) 1_{\{n\le N_0\}}\|_p^2 
&\le Cc, \q \|\tilde\cR_n(\infty)1_{\{n\le N_0\}}\|_p
\le\frac{Cc^{1-2\nu}}{(e^\s-1)^3}, 
\end{align*}
giving \eqref{PDGuni}. 
\end{proof}

\begin{proof}[Proof of Theorem \ref{UFA}]
The argument is a variation of that for Proposition \ref{UDFA}.  We do it when $\eta<1$; the $\eta=1$ case is similar. 
Let $\O_0$, $N$, $K$, $r(k)$, $\rho(k)$ and $\l$ be as in the proof of Proposition \ref{UDFA}.
Define
$$
\O_1=\O_0\cap
\bigcap_{n=1}^N
\bigcap_{k=1}^K
\{\|\tilde\Phi_n\|_{p,\rho(k)}1_{\{n\le N_0\}}\le\l\b_k\}
$$
where 
$$
\b_k=2C\bigg(\sqrt c\bigg(1+\log\bigg(\frac{r(k)}{r(k)-1}\bigg)\bigg)^{1/2}+\frac{c^{1-3\nu}}{(e^\s-1)^2}\bigg)
$$
and $C$ is the larger of the constant in \eqref{PDFun} and that in \eqref{PDFuni}. 
Then $\PP(\O_1)\le2c^m$ and the desired uniform estimate on $\Phi_n$  holds on $\O_1$, by the argument used in the proof of Proposition \ref{UDFA}. 
In arriving at this estimate we use the fact that for $r\ge1+c^{1/2}$ we have $(1+\log(r/(r-1)))^{1/2}\le c^{-\ve}$ for all sufficiently small $c>0$, for all $\ve >0$. 
\end{proof}

\section{Fluctuation scaling limit for ALE$(\eta)$} 
\label{sec:fluctuations}
In this section, we show that the fluctuations of ALE$(\eta)$ for $\eta\in(-\infty,1]$ are of order $\sqrt c$,
and we determine the distribution of the rescaled fluctuations.

Let $(\Phi_n)_{n\ge0}$ be an ALE$(\eta)$ process with basic map $F$ and regularization parameter $\s$.
Assume that $F$ has capacity $c\in(0,1]$ and regularity bound $\L\in[0,\infty)$.
We consider the limit $c\to0$ with $\s\to0$, 
and will show weak limits which are otherwise uniform in $F$, subject to the given regularity bound.
We embed in continuous time by setting $n(t)=\lfloor t/c\rfloor$ and defining 
\[ 
\Phi(t,z)=\Phi_{n(t)}(z),\q \tilde\Phi(t,z)=e^{-cn(t)}\Phi_{n(t)}(z)-z.
\]
We will show that the process of analytic functions $(\tilde\Phi(t,.)/\sqrt c)_{t\ge0}$ converges weakly to a Gaussian limit.

Let us define the metric spaces our processes will live in. 
To start with, let $D[0,\infty)$ denote the space of complex-valued \cadlag processes equipped with the Skorohod metric $\mathbf{d}$. 
To discuss weak convergence of sequences of Laurent coefficients, 
it is convenient to introduce the product space $D[0,\infty)^{\Z^+}$ of sequences of complex-valued \cadlag processes, 
with the metric of coordinate-wise convergence, given by 
\begin{equation*} 
\label{productM}
\mathbf{d}^{\Z^+}((a(k))_{k\ge0},(b(k))_{k\ge0})
=\sum_{k\ge0}2^{-k}\left(1\wedge\mathbf{d}(a(k),b(k))\right).
\end{equation*}
Finally, to talk about convergence of functions, 
let $\cH$ denote the space of analytic functions on $D_0=\{|z|>1\}$ with limits at $\infty$, 
equipped  with the metric of uniform convergence on compacts in $D_0\cup\{\infty\}$, given by
\[ 
d_\cH(f,g)
=\sum_{m\ge0}2^{-m}\left(1\wedge\sup_{|z|\ge1+2^{-m}}|f(z)-g(z)|\right). 
\]
We let $D_\cH[0,\infty)$ denote the space of $\cH$-valued \cadlag processes equipped with the associated Skorohod metric $\mathbf{d}_\cH$. 
Then all the above spaces are complete separable metric spaces \cite{B}, 
and $(\tilde\Phi(t,.)/\sqrt c)_{t\ge0}$ lies in $D_\cH[0,\infty)$.

To state our main fluctuation result, we now define the limiting fluctuation field on $C_\cH[0,\infty)$, 
the space of continuous processes with values in $\cH$. 
Let $(A(\cdot,k))_{k\ge0}$ denote a sequence of independent complex Ornstein--Uhlenbeck processes, solutions to 
\begin{equation}
\label{OU}
\begin{cases}
dA(t,k) & \!\!\!\! =-(1+(1-\eta)k)A(t,k)dt+\sqrt2dB_k(t),\\
\;A(0,k) & \!\!\!\! =0 
\end{cases} 
\end{equation}
where $(B_k)_{k\ge0}$ are independent complex Brownian motions. 
Thus $(A(\cdot,k))_{k\ge0}$ is a zero-mean Gaussian process, with covariance given for $s,t\in[0,\infty)$ by
$$
\E(A(s,k)\otimes A(t,k))=\int_{|s-t|}^{s+t}e^{-(1+(1-\eta)k)u}du
\left(\begin{matrix}1&0\\0&1\end{matrix}\right). 
$$
Here, on the left, we use the tensor product from $\R^2$.
Thus 
$$
(x+iy)\otimes(x'+iy')=\left(\begin{matrix} xx' & xy' \\ yx' & yy' \end{matrix}\right). 
$$
By standard estimates, the following series both converge almost surely, uniformly on compacts in 
$(t,z)\in[0,\infty)\times(D_0\cup\{\infty\})$
\[ 
\cF(t,z)=\sum_{k\ge0}A(t,k)z^{-k},\q \xi(t,z)=\sqrt 2\sum_{k\ge0}B_k(t)z^{-k}.
\]
Hence $\cF=(\cF(t,.):t\ge0)$ and $\xi=(\xi(t,.):t\ge0)$ are continuous random processes in $\cH$.
It is straightforward to check that
$$
\cF(t,z)=(1-\eta)\int_0^tD\cF(s,z)ds-\int_0^t\cF(s,z)ds+\xi(t,z),
$$
and $\xi$ is the analytic extension in $D_0$ of space-time white noise on the unit circle, so $\cF$ satisfies the stochastic PDE \eqref{PDEfluct}. 
In this section we prove Theorem 
\ref{th:fluctuations_intro} 
by showing that $\tilde\Phi/\sqrt c\to\cF$ in distribution on $D_\cH[0,\infty)$.

\subsection{Discarding lower order fluctuations}
Our analysis is based on the decomposition \eqref{barDECOMP}, which we rewrite in continuous time, with obvious notation as 
\[ 
\tilde\Phi(t,z)=\tilde\cM(t,z)+\tilde\cW(t,z)+\tilde\cR(t,z). 
\]
Define $\tilde{\cM}^0(t,z) = \b^{-1} \tilde{\cM}(t,z)$, where $\b$ is defined in Proposition \ref{PPC}, and recall that $|\b -1| \leq \Lambda \sqrt{c}$.  
In a first step, we will show that $\tilde{\cM}^0$ is the only term that matters in the limiting fluctuations. 

\begin{lemma} 
\label{le:neglect}
Under the hypotheses of Theorem \ref{th:fluctuations_intro}, for all $t\ge0$, we have 
$$
\sup_{s\le t}d_\cH\left( \frac{(\tilde\Phi-\tilde{\cM}^0)(s,.)}{\sqrt c} ,0\right)\to 0
$$
in probability as $c\to0$, uniformly in $\s$ and $F$.
\end{lemma}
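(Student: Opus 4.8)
The plan is to exploit the decomposition \eqref{barDECOMP}, written in continuous time as $\tilde\Phi(t,z)=\tilde\cM(t,z)+\tilde\cW(t,z)+\tilde\cR(t,z)$, together with the estimate $|\b-1|\le\L\sqrt c/2$ of Proposition \ref{PPC}. Since $\tilde\cM^0=\b^{-1}\tilde\cM$, we have
$$
\tilde\Phi-\tilde\cM^0=(1-\b^{-1})\tilde\cM+\tilde\cW+\tilde\cR,
$$
so it suffices to control the three terms on the right. For any $g$ analytic in $\{|z|>1\}$ with a limit at $\infty$ the maximum principle gives $\sup_{|z|\ge r}|g(z)|=\|g\|_{\infty,r}$, and $s\mapsto\tilde\Phi(s,\cdot)$ is piecewise constant, so the lemma reduces to showing that, for each fixed $r>1$,
$$
\max_{n\le\lfloor t/c\rfloor}\big\|\big(\tilde\Phi_n-\tilde\cM^0_n\big)1_{\{n\le N_0\}}\big\|_{\infty,r}\big/\sqrt c\to0
$$
in probability, and then summing over the dyadic radii $r_m=1+2^{-m}$ appearing in $d_\cH$ with the tail $\sum_{m\ge M}2^{-m}$ truncated; the values at $\infty$ need no separate treatment since they are dominated by the $\|\cdot\|_{\infty,r}$-norms.

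The heart of the matter is the bound
$$
\tn\big(\tilde\Phi_n-\tilde\cM^0_n\big)1_{\{n\le N_0\}}\tn_{p,r}\le C(r,p)\,c^{(1+\delta)/2}\qquad(n\le T/c)
$$
for some $\delta=\delta(\ve,\nu)>0$ and some $C$ independent of $c$, $\s$ and $F$, valid for every even $p$. I would obtain this by revisiting the intermediate estimates established in the course of the proof of Proposition \ref{DFAun}, applied with the auxiliary parameters $\ve,\nu$ chosen admissibly in terms of the $\ve$ of Theorem \ref{th:fluctuations_intro}. The term $(1-\b^{-1})\tilde\cM_n$ is $O(c)$ because $|1-\b^{-1}|\le\L\sqrt c$ while $\tn\tilde\cM_n1_{\{n\le N_0\}}\tn_{p,r}=O_r(\sqrt c)$. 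The terms $\tilde\cW_n$ and $\tilde\cR_n$ carry negative powers of $e^\s-1$, but using $e^\s-1\ge\s\ge c^{1/4-\ve}$ when $\eta<1$ (resp.\ $\s\ge c^{1/6-\ve}$ when $\eta=1$) one checks exponent by exponent that the contributions $c^{3-7\nu}/(e^\s-1)^4$, $c^{1-3\nu}/(e^\s-1)^2$, $c^{1-2\nu}/(e^\s-1)^2$ (and their $\eta=1$ analogues) are all $O(c^{1/2+\delta})$; the admissible range of $\nu$ in Proposition \ref{DFAun} already provides the room needed.

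To upgrade this averaged bound to the desired statement I would proceed as in the proof of Proposition \ref{UDFA}: apply \eqref{PINF} with $\rho=(r+1)/2$ to pass from $\tn\cdot\tn_{p,\rho}$ to $\|\,\|\cdot\|_{\infty,r}\,\|_p$, then use Chebyshev's inequality together with a union bound over the at most $t/c$ values $n\le\lfloor t/c\rfloor$, choosing $p$ large enough that $\delta p>1$, so that $\max_{n}\|(\tilde\Phi_n-\tilde\cM^0_n)1_{\{n\le N_0\}}\|_{\infty,r}/\sqrt c\to0$ in probability; finally Proposition \ref{UDFA} supplies an event of probability at least $1-c^m$ on which $N_0>\lfloor t/c\rfloor$, which removes the indicators. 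Combining with the dyadic-radius truncation yields the lemma, all bounds depending only on $(\L,\eta,T,\ve,\nu,p,t)$ and hence uniform in $\s$ and $F$; the $\eta=1$ case is identical, using the $\eta=1$ branches of Propositions \ref{DFAun} and \ref{UDFA}. I expect the only genuine difficulty to be the bookkeeping in the middle step: one must verify that under the relatively weak lower bounds $\s\gg c^{1/4}$ (resp.\ $c^{1/6}$) every error factor $(e^\s-1)^{-k}$ still leaves a term of strictly smaller order than $\sqrt c$ — this is precisely why the fluctuation theorem demands a stronger constraint on $\s$ than the disk theorem — and that the constants stay uniform over the admissible $\s$ and over $F$ of regularity at most $\L$.
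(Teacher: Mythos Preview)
Your proposal is correct and follows essentially the same approach as the paper: decompose $\tilde\Phi-\tilde\cM^0=(1-\b^{-1})\tilde\cM+\tilde\cW+\tilde\cR$, bound each piece in $\tn\cdot\tn_{p,r}$ using the intermediate estimates from the proof of Proposition \ref{DFAun} (the paper records precisely the bounds $\tn(\tilde\cM_n-\tilde\cM^0_n)1_{\{n\le N_0\}}\tn_{p,r}\le Cc$, $\tn\tilde\cW_n1_{\{n\le N_0\}}\tn_{p,r}\le Cc+Cc^{3/2-\ve}/(e^\s-1)^2$, $\tn\tilde\cR_n1_{\{n\le N_0\}}\tn_{p,r}\le Cc^{1-\ve}/(e^\s-1)^2$ and their $\eta=1$ analogues), check that under $\s\ge c^{1/4-\ve}$ (resp.\ $c^{1/6-\ve}$) each is $o(\sqrt c)$, then invoke the argument of Proposition \ref{UDFA} to pass to uniform-in-$z$ high-probability statements and to remove the indicator $1_{\{n\le N_0\}}$. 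Your explicit mention of the dyadic truncation in $d_\cH$ and of \eqref{PINF} just spells out what the paper compresses into ``by arguments from the proof of Proposition \ref{UDFA}''.
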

\begin{proof}
Fix $\ve\in(0,1/6)$ as in the statement of Theorem \ref{th:fluctuations_intro} and set
\[ 
\d_0=
\begin{cases}
c^{1/2-\ve/3}/(e^\s-1),&\text{ if $\eta <1$},\\
c^{1/2-\ve/6}/(e^\s-1)^{3/2},&\text{ if $\eta=1$}.
\end{cases}
\] 
We first consider the case $\eta \in (-\infty,1)$. Recall that in the proof of Proposition \ref{DFAun} we showed that, for all $T\in[0,\infty)$, $p > 1+1/(2\ve )$ and $r>1$, there is a constant $C=C(\L,\eta,T,\ve,p,r)<\infty$ such that for all 
 $c\le1/C$, $e^\s\ge1+c^{1/2-\ve}$ and $n\le T/c$, we have 
\begin{equation*}
\begin{split} 
  &\tn (\tilde\cM_n - \tilde{\cM}^0 )1_{\{n\le N_0\}}\tn_{p,r} = \frac{|\beta-1|}{|\beta|}\tn \tilde\cM_n 1_{\{n\le N_0\}}\tn_{p,r} \le Cc, \\ 
&\tn\tilde\cW_n1_{\{n\le N_0\}}\tn_{p,r}\le Cc+\frac{Cc^{3/2-\ve}}{(e^\s-1)^2},\q
\tn\tilde\cR_n1_{\{n\le N_0\}}\tn_{p,r}\le\frac{Cc^{1-\ve}}{(e^\s-1)^2}.
\end{split} 
\end{equation*}
Here we have used that $|\b-1|\le\L\sqrt c$. 
Note that under the further restriction $\s\ge c^{1/4-\ve}$,
$$
c^{1-\ve}/(e^\s-1)^2\le c^{1/2 +\ve}.
$$
By arguments from the proof of Proposition \ref{UDFA}, it follows that
\begin{equation*}
(\tilde\cM - \tilde{\cM}^0 )(t,z)1_{\{t/c\le N_0\}}/\sqrt c\to0,\q
\tilde\cW(t,z)1_{\{t/c\le N_0\}}/\sqrt c\to0,\q
\tilde\cR(t,z)1_{\{t/c\le N_0\}}/\sqrt c\to0
\end{equation*}
in probability as $c\to0$,
uniformly on compacts in $(t,z)\in[0,T]\times(D_0\cup\{\infty\})$,
and uniformly in $\s$ and $F$ subject to the given constraints.
On the other hand, by Proposition \ref{UDFA}, we know that $\PP(N_0<T/c)\to0$ in the same limiting regime.
The claim of the lemma follows.

The case $\eta=1$ is handled by the same argument with straightforward modifications.
\end{proof}

\subsection{Covariance structure}
We now focus on the leading order fluctuations, coming from the martingale term 
\begin{equation} 
\label{Mfluct2}
\tilde\cM^0(t,z)
=\sum_{j=1}^{n(t)}e^{-cj}P^{n(t)-j}M^0_j(z), 
\end{equation}
where
\[  
M_n^0(z) = \b^{-1} M_n(z) 
=\frac{2ce^{cn}z}{e^{-i\Th_n}z-1}-\E\bigg(\frac{2ce^{cn}z}{e^{-i\Th_n}z-1}\bigg|\cF_{n-1}\bigg).
\]
Let $(\Theta_n^u)_{n\ge1}$ be a sequence of independent uniform random variables in $[0,2\pi)$.
Define for $|z|>1$
\[ 
M_n^u(z)
=\frac{2ce^{cn}z}{ze^{-i\Th^u_n}-1} - \E \left( \frac{2ce^{cn}z}{ze^{-i\Th^u_n}-1} \bigg| \cF_{n-1}^u \right) 
=\frac{2ce^{cn}z}{ze^{-i\Th^u_n}-1}, 
\]
where $\cF_{n-1}^u $ is the $\sigma$-algebra generated by $\{\Theta_k^u : k\leq n-1\}$. 
Expanding in Laurent series, we find 
\[  
M_n^0(z)=\sum_{k\ge0}\hat M^0_n(k)z^{-k},\q
M_n^u(z)=\sum_{k\ge0}\hat M^u_n(k)z^{-k}
\]
where
\[ 
\begin{split} 
\hat M^0_n(k)
=2ce^{cn}\left(e^{i\Th_n(k+1)}-\E(e^{i\Th_n(k+1)}|\cF_{n-1})\right),\q
\hat M_n^u(k)=2ce^{cn}e^{i\Th^u_n (k+1)}. 
\end{split}
\]
Recalling that the operator $P$ acts diagonally on Laurent coefficients, set 
\[ 
\begin{split} 
a_{j,n}(k)
=\frac{e^{-cj}p(k)^{n-j}\hat M^0_j(k)}{\sqrt c},\q
 u_{j,n}(k) 
=\frac{e^{-cj}p(k)^{n-j}\hat M^u_j(k)}{\sqrt c}
\end{split} , 
\]
where 
	\[ p(k) = e^{-c(k+1)} + c\eta k e^{-\s (k+1)} ,\]
and define for $t\ge0$
\begin{equation*} 
\label{AU}
\tilde A(t,k)
=\sum_{j=1}^{n(t)}a_{j,n(t)}(k),\q
U(t,k) 
=\sum_{j=1}^{n(t)}u_{j,n(t)}(k).
\end{equation*}
Let $\tilde\cM^u(t,z)$ be defined as in \eqref{Mfluct2} with $M^0_j$ replaced by $M_j^u$. 
Then we have 
\[ 
\begin{split} 
\frac{\tilde\cM^0(t,z)}{\sqrt c}  
=\sum_{k\ge0}\tilde A(t,k)z^{-k},\q
\frac{\tilde\cM^u(t,z)}{\sqrt c} 	
=\sum_{k\ge0}U(t,k)z^{-k}. 
\end{split} 
\]
By an elementary calculation, we obtain
\[
\E(\hat{M}^u_j(k)\otimes\hat{M}^u_j(k'))
=2c^2e^{2cj}\d_{kk'}
\left(\begin{matrix} 1 & 0 \\ 0 & 1 \end{matrix}\right) 
\]
from which 
\[ 
\E(u_{j,n}(k)\otimes u_{j,n'}(k')) 
=2cp(k)^{n+n'-2j}\d_{k k'}
\left(\begin{matrix}1&0\\0&1\end{matrix}\right). 
\]
Recall that for $\eta \in [0,1]$
\[ 
p_0(k)= e^{c(1+(1-\eta)k)} p(k) .
\]
By some straightforward estimation, recalling that $\s \to 0$, we have
$$
0\le1-p_0(k)^{2j}\le Cc\s j k(k+1).
$$
Note that if $j \leq t/c$ for some $t>0$, and $k$ is fixed, then the right hand side converges to $0$ as $c\to 0$. 
In the case $\eta <0$, define $p_0(k)$ exactly as above (note that this differs from the definition in \eqref{eq:p0}). Provided $c$ is taken sufficiently small that $\s-c-c|\eta|>0$, 
 we have 
\[ 
1+c\eta ke^{-(\s-c)(k+1)}\ge e^{c\eta k} , 
\]
and hence $p_0(k) \geq 1$. A straightforward estimation therefore gives 
	\[ 0\leq p_0(k)^{2j} -1 \leq C c\s j k (k+1) . \]
Hence 
\begin{align} 
\notag
&\sum_{j=1}^{n(s)}\E(u_{j,n(s)}(k)\otimes u_{j,n(t)}(k'))\\ 
&\q\q=2c\d_{kk'}\sum_{j=1}^{n(s)}p(k)^{n(s)+n(t)-2j}\left(\begin{matrix}1&0\\0&1\end{matrix}\right)
\to\d_{kk'}\int_{t-s}^{t+s}e^{-(1+(1-\eta)k)u}du\left(\begin{matrix}1&0\\0&1\end{matrix}\right).
\label{ts}
\end{align}
Now, for any $k,k'\ge0$ and $s,t\in[0,\infty)$ with $s\le t$, the following limit holds in probability as $c\to 0$, uniformly in $\s$ and $F$,
\begin{equation}
\label{AUE}
\sum_{j=1}^{n(s)} 
\left|\E(a_{j,n(s)}(k)\otimes a_{j,n(t)}(k')|\cF_{j-1})-\E(u_{j,n(s)}(k)\otimes u_{j,n(t)}(k'))\right|\to 0. 
\end{equation}
To see this, recall that by Proposition \ref{DFA} for all $m\in\N$ 
there exists a constant $C=C(\L,\eta , \ve ,m,T)<\infty$ such that, for $c\le1/C$ and
$\d_0$ defined as in the proof of Lemma \ref{le:neglect}, 
there exists an event $\O_0$ of probability at least $1-c^m$ on which, for all $n\le T/c$ and all $\th\in[0,2\pi)$, 
\[ 
|\tilde\Phi_n'(e^{\s+i\th})|\le\d_0\le1 ,
\]
and hence, by \eqref{h_uniform}, $|h_n(\th)-1|\le63\d_0$.
Then, on $\O_0$, for $c\le1/C$ and $t\le T$,
\[ 
\begin{split} 
\sum_{j=1}^{n(s)} 
&\left|\E(a_{j,n(s)}(k)\otimes a_{j,n(t)}(k')|\cF_{j-1})-\E(u_{j,n(s)}(k)\otimes u_{j,n(t)}(k'))\right|\\ 
&\le\frac{e^{-c(n(s)+n(t))}}c\sum_{j=1}^{n(s)}\left|\E(\hat{M}^0_j(k)\otimes\hat{M}^0_j(k')|\cF_{j-1})- 
	\E(\hat{M}^u_j(k)\otimes\hat{M}^u_j(k'))\right|\\ 
&\le4c\sum_{j=1}^{n(s)}e^{-c(n(s)+n(t)-2j)}
	\bigg|\fint_0^{2\pi}(e^{i\th(k+1)}\otimes e^{i\th(k'+1)})(h_j(\th)-1)d\th-\\ 
&\q\q\q\q\q\q\q\q\left(\fint_0^{2\pi}e^{i\th(k+1)}(h_j(\th)-1)d\th\right)\otimes\left(\fint_0^{2\pi}e^{i\th(k'+1)}(h_j(\th)-1)d\th\right)\bigg|\\ 
&\le Cc\d_0\sum_{j=1}^{n(s)}e^{-c(n(s)+n(t)-2j)}.
\end{split} 
\]
Since $c\d_0n(s)\to0$ as $c\to0$, this shows the claimed limit in probability.

\subsection{Convergence of Laurent coefficients}
We now show that the processes of rescaled Laurent coefficients $(\tilde A(\cdot,k))_{k\ge0}$ of $\tilde\cM^0(t,z)$ converge weakly to
those of the limiting process $\cF$. 

\begin{theorem}\label{th:Laurent}
Under the hypotheses of Theorem \ref{th:fluctuations_intro}, 
in the limit $c\to0$ and $\s\to0$ and uniformly in the basic map $F$, we have 
\[ 
\left(\tilde A(.,k)\right)_{k\ge0}\to\left(A(.,k)\right)_{k\ge0}
\]
in distribution in $(D[0,\infty )^{\Z^+},\mathbf{d}^{\Z^+})$. 
\end{theorem}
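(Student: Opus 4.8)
The plan is to prove this multidimensional functional central limit theorem by reducing, coordinate by coordinate, to the Martingale Central Limit Theorem, using the explicit Gaussian reference processes $U(\cdot,k)$ built from uniform angles as a bridge. First I would fix finitely many indices $k_1,\dots,k_M$ and a time horizon $T$, and observe that it suffices (by the Cram\'er--Wold device together with tightness in the product Skorohod space, which follows from tightness of each coordinate) to establish convergence of the joint laws of the finite collection $\bigl(\tilde A(\cdot,k_i)\bigr)_{i=1}^M$ to $\bigl(A(\cdot,k_i)\bigr)_{i=1}^M$ in $D[0,\infty)^M$. Each $\tilde A(t,k)=\sum_{j=1}^{n(t)}a_{j,n(t)}(k)$ is \emph{not} itself a martingale, because the factor $p(k)^{n(t)-j}$ depends on the terminal time; the standard device is to note that for the \emph{final} time $n=n(T)$ the array $\bigl(a_{j,n(T)}(k)\bigr)_{j\le n(T)}$ is a martingale difference array with respect to $(\cF_{j})_{j}$ (since $\hat M^0_j(k)=2ce^{cj}(e^{i\Th_j(k+1)}-\E(e^{i\Th_j(k+1)}\mid\cF_{j-1}))$ has zero conditional mean), and that $\tilde A(t,k)=p(k)^{n(t)-n(T)}\sum_{j=1}^{n(t)}a_{j,n(T)}(k)$, so the whole path is recovered from a single martingale by a deterministic, time-continuous multiplicative factor converging (as $c\to0$, uniformly on $[0,T]$) to $e^{(1+(1-\eta)k)(T-t)}$. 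Thus it is enough to prove a joint martingale CLT for $\bigl(\sum_{j\le n(t)}a_{j,n(T)}(k_i)\bigr)_i$ and then compose with the continuous limiting maps, which preserves weak convergence by the continuous mapping theorem.

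Next I would verify the two hypotheses of the (vector-valued) martingale CLT: (i) convergence of the predictable quadratic covariation, and (ii) a Lindeberg/conditional-Lyapunov negligibility condition on the jumps. For (i), the key input is \eqref{AUE}, which was essentially proved in the excerpt's covariance section: on the high-probability event $\O_0$ (where $\|\tilde\Phi_n'\|_{\infty,e^\s}\le\d_0$, hence $|h_j(\th)-1|\le C\d_0$), the conditional covariances $\sum_{j}\E(a_{j,n(s)}(k)\otimes a_{j,n(t)}(k')\mid\cF_{j-1})$ differ from the deterministic quantities $\sum_j\E(u_{j,n(s)}(k)\otimes u_{j,n(t)}(k'))$ by an error of order $c\d_0 n(s)\to0$; combined with \eqref{ts}, which identifies the limit of the $u$-covariances as $\d_{kk'}\int_{t-s}^{t+s}e^{-(1+(1-\eta)k)u}du\cdot I$, this gives exactly the covariance structure of the $A(\cdot,k)$ declared in \eqref{OU}. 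Crucially the limiting covariance is diagonal in $(k,k')$, so the limiting coordinates are independent, matching the target. I must also check cross terms between the real and imaginary parts and between distinct $j$'s: distinct $j$ give zero by the martingale (orthogonality of increments), and the real/imaginary block structure is handled by the same elementary computation that produced the identity matrix in $\E(\hat M^u_j(k)\otimes\hat M^u_j(k))=2c^2e^{2cj}\d_{kk'}I$. For (ii), each increment satisfies the deterministic bound $|a_{j,n(T)}(k)|\le 2c e^{-cj}p(k)^{n(T)-j}e^{cj}/\sqrt c\cdot 2\le C\sqrt c$, so $\max_{j\le n(T)}|a_{j,n(T)}(k)|\le C\sqrt c\to0$ and the Lindeberg condition is immediate; this also supplies tightness of each coordinate path (e.g.\ via Aldous's criterion, or directly since the quadratic variation is of bounded total mass $\sum_j\E|a_{j,n(T)}(k)|^2\le C$ uniformly).

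Having convergence on $[0,T]$ for every $T$, I would pass to $D[0,\infty)$ by the standard consistency/projective-limit argument (convergence in $D[0,T]$ for all $T$ along a common subsequence, together with identification of the limit as the process $A(\cdot,k)$ restricted to $[0,T]$, forces convergence in $D[0,\infty)$ with its usual metric); then convergence of every finite marginal in $D[0,\infty)^M$ plus coordinatewise tightness yields convergence in $(D[0,\infty)^{\Z^+},\mathbf d^{\Z^+})$, since the metric $\mathbf d^{\Z^+}$ only probes finitely many coordinates up to $\epsilon$. The main obstacle, and where care is genuinely needed, is the first reduction: the processes $\tilde A(\cdot,k)$ are time-inhomogeneous martingale \emph{triangular arrays}, so one cannot apply a single off-the-shelf martingale FCLT to $t\mapsto\tilde A(t,k)$ directly. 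The clean resolution is the multiplicative-factor trick above — rewrite $\tilde A(t,k)$ in terms of the genuine martingale $S_n(k):=\sum_{j\le n}a_{j,n(T)}(k)$ and the deterministic factor $p(k)^{-(n(T)-n(t))}$, prove the FCLT for $t\mapsto S_{n(t)}(k)$ (whose limit is a time-changed Brownian motion with variance $\int_0^t e^{-2(1+(1-\eta)k)(T-u)}\,du$, using \eqref{ts}), and then apply the continuous map $f\mapsto\bigl(t\mapsto e^{(1+(1-\eta)k)(T-t)}f(t)\bigr)$; a short computation confirms the resulting Gaussian process has the OU covariance of $A(\cdot,k)$, and one checks the limit does not depend on $T$, so the construction is consistent. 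Everything else — the bound $|h_j-1|\le C\d_0$ on $\O_0$, the negligibility of the martingale vs.\ $u$-array covariances, the elementary covariance computations — is already essentially in hand from the preceding subsections and from Propositions \ref{DFA} and \ref{UDFA}.
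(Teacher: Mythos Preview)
Your approach is correct, but it takes a slightly more elaborate route than the paper. The paper avoids the multiplicative-factor trick entirely: for finite-dimensional distributions it simply fixes times $t_1<\dots<t_m$ and indices $k\le K$, and observes that the linear combination $\sum_j X_{j,n_m}$ with $X_{j,n_m}=\sum_{k,l}\a_{k,l}\,a_{j,n_l}(k)\,1_{\{j\le n_l\}}$ is already a martingale difference array in $j$ (the time-inhomogeneous weights $p(k)^{n_l-j}$ cause no difficulty here since the terminal times $n_l$ are fixed), so the scalar martingale CLT applies directly, with condition (i) supplied by \eqref{AUE} and \eqref{ts} and condition (ii) by the crude bound $|a_{j,n}(k)|\le4\sqrt c$. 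For tightness, rather than invoking Aldous or a functional CLT, the paper proves the Kolmogorov-type estimate $\limsup_{c,\s\to0}\|\tilde A(s,k)-\tilde A(t,k)\|_p\le C|t-s|^{1/2}$ for each fixed $k$ by splitting $\tilde A(t,k)-\tilde A(s,k)$ into the contribution from $j\le n(s)$ (controlled via $|1-p(k)^{n(t)-n(s)}|\le C(t-s+c)$) and from $n(s)<j\le n(t)$ (controlled by Burkholder on $n(t)-n(s)$ terms). Your route---rewrite $\tilde A(t,k)=p(k)^{-(n(T)-n(t))}S_{n(t)}(k)$ for a genuine martingale $S$, apply a functional martingale CLT to $S$, then transfer via the continuous map $f\mapsto(t\mapsto e^{(1+(1-\eta)k)(T-t)}f(t))$---is a legitimate alternative that packages f.d.d.\ convergence and tightness together; it buys you conceptual unity at the cost of checking the functional-CLT hypothesis that the predictable bracket of $S$ converges \emph{as a process} on $[0,T]$ (not just at fixed times), which you do not fully spell out but which follows from the same ingredients. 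The paper's separation into f.d.d.\ plus an explicit H\"older bound is shorter and makes the tightness argument more transparent.
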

\begin{proof}
It will suffice to show that the finite-dimensional distributions of $(\tilde A(\cdot,k))_{k\ge0}$ converge to those of $(A(\cdot,k))_{k\ge0}$, 
and that for each fixed $k$ the processes $\tilde A(\cdot,k)$ are tight in $(D[0,\infty),\mathbf{d})$.

We start by proving convergence of finite-dimensional distributions. 
Fix positive integers $K$ and $m$ and pick arbitrary $0\le t_1<t_2<\dots<t_m$. 
We aim to show the following convergence in distribution
\begin{equation*} 
\label{fdd}
 \left(
 \begin{matrix} 
\tilde A(t_1,1) & \tilde A(t_1,2) & \cdots & \tilde A(t_1,K) \\
\vdots & \vdots & & \vdots \\
\tilde A(t_m ,1) & \tilde A(t_m,2) & \cdots & \tilde A(t_m,K)
 \end{matrix} 
 \right) 
\longrightarrow
 \left( 
 \begin{matrix} 
A(t_1,1) & A(t_1,2) & \cdots & A(t_1,K) \\
\vdots & \vdots & & \vdots \\
A(t_m,1) & A(t_m,2) & \cdots & A(t_m ,K)
\end{matrix} 
\right). 
\end{equation*}
Write $n_i$ in place of $n(t_i)$ for brevity. 
Fix real-linear maps $\a_{k,l}:\C\to\R$, for $k=1,\dots,K$ and $l=1,\dots,m$
and consider the real-valued random variables given by 
\[
X_{j,n_m}
=\sum_{k=1}^K\sum_{l=1}^m\a_{k,l}a_{j,n_l}(k)1_{\{j\le n_l\}}.  
\]
Then
$$
\sum_{k=1}^K\sum_{l=1}^m\a_{k,l}\tilde A(t_l,k)  
=\sum_{j=1}^{n_m}X_{j,n_m}.
$$
It is readily verified that $(X_{j,n_m}:j=1,\dots,n_m)$ is a martingale difference sequence with respect to the filtration 
$(\cF_j:j=1,\dots,n_m)$. 
Set
$$
\Sigma
=\sum_{k=1}^K\sum_{l,l'=1}^m\<\a_{k,l},\a_{k,l'}\>\int_{|t_l-t_{l'}|}^{t_l+t_{l'}}e^{-(1+(1-\eta)k)u}du 
$$
and note that $\Sigma$ is the variance of
$$
\sum_{k=1}^K\sum_{l=1}^m\a_{k,l} A(t_l,k).
$$
We will use the following martingale central limit theorem \cite[Theorem 18.1]{B}. 
\begin{theorem}
\label{ThCLT}
Suppose given, for each $n\in\N$, a martingale difference array $(X_{j,n}:j=1,\dots,n)$ with filtration $(\cF_{j,n}:j=1,\dots,n)$. 
Assume that, for some $\Sigma\in[0,\infty)$ and for all $\ve>0$, 
the following two conditions hold in the limit $n\to\infty$:
\begin{itemize}
\item[{\rm (i)}]\label{i} $\displaystyle \sum_{j=1}^n\E\left(X_{j,n}^2|\cF_{j-1,n}\right)\to\Sigma$ in probability, 
\item[{\rm (ii)}] 
$\displaystyle\sum_{j=1}^n\E(|X_{j,n}|^2\,1_{\{|X_{j,n}|>\ve\}})\to0$.
\end{itemize}
Then $\displaystyle\sum_{j=1}^nX_{j,n}\to\cN(0,\Sigma)$ in distribution as $n\to\infty$. 
\end{theorem}
We can apply this theorem to the limit $c\to0$ and the martingale difference array $(X_{j,n_m}:j=1,\dots,n_m)$, with 
$n_m=n(t_m)=\lfloor t_m/c\rfloor$.
We have
\[ 
\sum_{j=1}^{n_m}\E(X_{j,n_m}^2|\cF_{j-1}) 
=\sum_{k,k'=1}^K\sum_{l,l'=1}^m\left\<\a_{k,l}
\sum_{j=1}^{n_l\wedge n_{l'}}\E(a_{j,n_l}(k)\otimes a_{j,n_{l'}}(k')|\cF_{j-1}),\a_{k',l'}\right\>\to\Sigma
\]
in probability as $c\to0$ by \eqref{AUE} and \eqref{ts}, which proves (i).
To see (ii) note that 
\begin{equation*} 
\label{ajk}
|a_{j,n}(k)|\le4\sqrt c\quad\mbox{ for all }k\le K,j\le n, 
\end{equation*} 
from which, for arbitrary $\ve>0$ and a constant $C$ allowed to depend on the constants $\a_{k,l}$, $K$ and $m$, 
for all sufficiently small $c$,
\[ 
\sum_{j=1}^{n_m}\E(|X_{j,n_m}|^2\,1_{\{|X_{j,n_m}|>\ve\}}) 
\le Cc\sum_{j=1}^{n_m}\PP(|X_{j,n_m}|>\ve) 
\le Ct_m\PP\left(\max_{j\le n_m}|X_{j,n_m}|>\ve\right)=0. 
\]
Since the linear maps $\a_{k,l}$ were arbitrary, 
this shows convergence of the finite-dimensional distributions of $(\tilde A(t,k))_{k\ge0}$ to those of $(A(t,k))_{k\ge0}$. 

It remains to prove tightness. 
We will show that, for all $p\in[2,\infty)$, all $k\ge0$ and all $T\in[0,\infty)$, there is a constant $C=C(p,\eta,k,T)<\infty$
such that, for all $s,t\in[0,T]$,
\begin{equation} 
\label{UHC}
\limsup_{c,\s\to0}\|\tilde A(s,k)-\tilde A(t,k)\|_p\le C|t-s|^{1/2}.
\end{equation} 
Since we may choose $p>2$, this implies tightness, by a standard criterion.

Recall that
$$
\tilde A(t,k)=\sum_{j=1}^{n(t)}a_{j,n(t)}(k)=\frac1{\sqrt c}\sum_{j=1}^{n(t)}e^{-cj}p(k)^{n(t)-j}\hat M^0_j(k)
$$
and that $(\hat M^0_j(k):j\ge0)$ is a martingale difference sequence with $|\hat M^0_j(k)|\le2ce^{cj}$.
Also $0\le p(k)\le1$ and, estimating as above,
$$
1-p(k)^j\le C[\s k(k+1)+(1+(1-\eta)k)]cj.
$$
Fix $s,t\in[0,T]$ with $s\le t$ and note that $n(t)-n(s)\le1+(t-s)/c$.
Then
\begin{align*}
\tilde A(t,k)-\tilde A(s,k)
= & \ \frac1{\sqrt c}\sum_{j=1}^{n(s)}e^{-cj}p(k)^{n(s)-j}(p(k)^{n(t)-n(s)}-1)\hat M^0_j(k) \\
& \ +\frac1{\sqrt c}\sum_{j=n(s)+1}^{n(t)}e^{-cj}p(k)^{n(t)-j}\hat M^0_j(k)
\end{align*}
and so, by Burkholder's inequality, for some constant $C=C(p,\eta,k,T)<\infty$,
\begin{equation}
\label{AHE}
\|\tilde A(t,k)-\tilde A(s,k)\|_p^2\le C\left(\s^2k^2(k+1)^2+(1+(1-\eta)k)^2(t-s+c)^2+t-s+c\right).
\end{equation}
The asymptotic H\"older condition \eqref{UHC} follows.
\end{proof}

\subsection{Convergence as an analytic function}\label{sec:Hfluct}
In this section we deduce the convergence of $\tilde\cM^0(t,z)$ from that of the Laurent coefficients, 
thus concluding the proof of Theorem \ref{th:fluctuations_intro}. 
To this end, set 
\[ 
\tilde\cF(t,z)
=\frac{\tilde\cM^0(t,z)}{\sqrt c} 
=\sum_{k\ge0}\tilde A(t,k)z^{-k},\q
\cF(t,z)
=\sum_{k\ge0}A(t,z)z^{-k}. 
\]
These define processes in $D_\cH[0,\infty)$. 
For any $T>0$ let $D_\cH[0,T]$ denote the space of $\cH$-valued \cadlag processes on $[0,T]$. 
Then $\tilde\cF,\cF$ define processes in $D_\cH [0,T]$ by restriction, for all $T>0$.  
For any $r>1$ let $\cH_r$ denote the space of analytic functions on $\{|z|\ge r\}$ with limits at $\infty$, 
equipped  with the metric 
\[ 
d_{r}(f,g)=\sup_{|z|\ge r}|f(z)-g(z)|. 
\]
We let $D_{\cH_r}[0,T]$ denote the space of \cadlag processes with values in $\cH_r$ equipped with the associated Skorohod metric $\mathbf{d}_{T,r}$. 
To show that $\tilde\cF$ converges to $\cF$ in distribution on $(D_\cH[0,\infty),\mathbf{d}_\cH)$,
it suffices to show that, for any $T>0$ and $r>1$, 
the process $\tilde\cF$ converge to $\cF$ in distribution on $(D_{\cH_r}[0,T],\mathbf{d}_{T,r})$ as $c\to 0$ 
(see Billingsley \cite{B}). 
This in turn follows from the lemma below. 

\begin{lemma}\label{le:Kn}
For any $T>0$, $r>1$ and $\d=\d(r) \in [0,1]$ such that $e^{-2\d}r>1$, we have that for any $\ve>0$
\[ 
\lim_{K\to\infty}\sup_{c\in(0,\d]}\PP\left(\mathbf{d}_{T,r}\bigg(\sum_{k=K}^\infty\tilde A(.,k)z^{-k},0\bigg)>\ve\right)=0. 
\]
\end{lemma}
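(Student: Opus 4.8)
The plan is to show that the tail sum $\sum_{k=K}^\infty \tilde A(t,k)z^{-k}$ is small in $D_{\cH_r}[0,T]$, uniformly in $c\in(0,\d]$, by a direct moment estimate together with a chaining/tightness argument. First I would fix $r>1$ and $\d$ as in the statement, and work with a radius $\rho = e^{-\d}r$, so that $\rho > 1$ and $\rho^2 > e^{-2\d}r^2 \ge e^{-2\d}r > 1$ when $r>1$; more importantly, since $c\le\d$, we have $e^{c(1-\eta)n(t)}\le e^{cn(t)}\le e^{T}$ bounded and the relevant Laurent series converge geometrically on $|z|\ge r$ at a rate controlled by $\rho/r<1$. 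The key point is that for $|z|\ge r$,
$$
\Big|\sum_{k=K}^\infty \tilde A(t,k)z^{-k}\Big|
\le \sum_{k=K}^\infty |\tilde A(t,k)|\, r^{-k},
$$
so it suffices to control $\sum_{k\ge K}\|\tilde A(t,k)\|_p\, r^{-k}$ and an analogous increment bound, then use analyticity to upgrade from pointwise to uniform-on-$\{|z|\ge r\}$ control.

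The main computational input is a bound on $\|\tilde A(t,k)\|_p$ that is at most polynomial in $k$, so that the factor $r^{-k}$ forces convergence. Recall $\tilde A(t,k) = c^{-1/2}\sum_{j=1}^{n(t)} e^{-cj}p(k)^{n(t)-j}\hat M^0_j(k)$, that $(\hat M^0_j(k))_j$ is a martingale difference sequence with $|\hat M^0_j(k)|\le 2ce^{cj}$, and (as used in the proof of Theorem \ref{th:Laurent}) that $0\le p(k)\le 1$. Burkholder's inequality (Theorem \ref{thB}) then gives, for $p\in[2,\infty)$,
$$
\|\tilde A(t,k)\|_p^2
\le \frac{C}{c}\Big\|\sum_{j=1}^{n(t)} e^{-2cj}p(k)^{2(n(t)-j)}|\hat M^0_j(k)|^2\Big\|_{p/2}
\le \frac{C}{c}\sum_{j=1}^{n(t)} 4c^2 \le C t \le C T,
$$
uniformly in $k$ and in $c\in(0,\d]$ — in fact we get a bound with no growth in $k$ at all. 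The same splitting as in the tightness argument for \eqref{UHC} (separating the terms $j\le n(s)$, where the factor $p(k)^{n(t)-n(s)}-1$ appears, from the terms $n(s)<j\le n(t)$) yields, via \eqref{AHE} with the crude bounds $1-p(k)^j \le C[\s k(k+1) + (1+(1-\eta)k)]cj$ and $\s\to0$,
$$
\limsup_{c,\s\to0}\|\tilde A(t,k)-\tilde A(s,k)\|_p^2 \le C(1+k)^2(t-s) + C(t-s)
$$
for $s,t\in[0,T]$; at most polynomial growth in $k$. Multiplying by $r^{-k}$ and summing over $k\ge K$ gives $\mathbb E\,\sup_{|z|\ge r}|\sum_{k\ge K}\tilde A(t,k)z^{-k}|^p \to 0$ as $K\to\infty$ uniformly in $c\in(0,\d]$, and similarly a uniform asymptotic Hölder-$1/2$ modulus for the tail process whose constant is $\sum_{k\ge K}C(1+k)r^{-k}\to0$.

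Finally I would combine these two estimates to conclude. The pointwise (in $t$) moment bound gives, by Markov's inequality, $\sup_{c\le\d}\PP(\sup_{|z|\ge r}|\sum_{k\ge K}\tilde A(t,k)z^{-k}|>\ve)\to0$ for each fixed $t$; the Hölder bound on increments, together with the standard Kolmogorov-type tightness criterion on $D([0,T],\cH_r)$ (choosing $p>2$, exactly as in the proof of Theorem \ref{th:Laurent}), upgrades this to the statement with $\mathbf{d}_{T,r}$, i.e. uniformly over $[0,T]$. The main obstacle is purely bookkeeping: making sure every estimate is genuinely uniform in $c\in(0,\d]$ and in $\s$ (not just in the limit $c,\s\to0$), which requires keeping the constants in the Burkholder and Hölder bounds explicit and noting that they depend only on $p,\eta,k,T$ and not on $c$ or $\s$; once that is done, the geometric factor $r^{-k}$ does all the work and there is no genuine analytic difficulty.
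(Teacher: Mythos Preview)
Your overall strategy—bound $\|\tilde A(t,k)\|_p$ uniformly in $k$ and then let the geometric weight $r^{-k}$ do the summation—is sound at the level of fixed-time estimates, and the Burkholder bound $\|\tilde A(t,k)\|_p^2\le CT$ uniformly in $k,c$ is correct. But the passage from fixed-time bounds to control of $\sup_{t\in[0,T]}$ is where you have a genuine gap, not just bookkeeping. You invoke the H\"older estimate \eqref{AHE} only in its $\limsup_{c,\s\to0}$ form, which is not the uniform-in-$c\in(0,\d]$ statement you need; and you appeal to a ``Kolmogorov-type tightness criterion on $D([0,T],\cH_r)$'' without addressing that the process is c\`adl\`ag with jumps at spacing $c$, so the usual continuity criterion does not apply and the $+c$ in $(t-s+c)$ cannot simply be discarded. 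A further warning sign is that your argument never uses the hypothesis $e^{-2\d}r>1$, which is not incidental in the paper's proof.

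The paper takes a different and more direct route to the supremum. It observes that $p(k)^{-n(t)}\tilde A(t,k)$ is a martingale, partitions $[0,T]$ into subintervals $I_l$ of length $\d$, and applies Doob's $L^2$ maximal inequality on each $I_l$. This yields $\E\big(\sup_{t\in I_l}|\tilde A(t,k)|^2\big)\le C\,p(k)^{-2(n(l\d)-n((l-1)\d))}\le Ce^{4\d k}$, using $p(k)\ge e^{-c(k+1)}$ and $c\le\d$. Summing over the $\lceil T/\d\rceil$ intervals and over $k\ge K$ with weight $r^{-k}$ then converges precisely because $e^{-2\d}r>1$. So the hypothesis is exactly what makes the Doob bound summable; the partition length $\d$ balances the number of intervals against the exponential growth in $k$. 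Your chaining approach, if carried out carefully at the discrete level with a uniform-in-$c$ increment bound, could in principle yield a polynomial-in-$k$ estimate and hence a stronger conclusion (no constraint linking $\d$ and $r$), but this is substantially more work than you indicate and is not what \eqref{AHE} as written provides.
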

\begin{proof}
Fix $\ve,T,r,\d$ as in the statement, 
and partition the interval $[0,T]$ into sub-intervals $I_l=[(l-1)\d,l\d)$ for $1\le l\le \lceil T/\d\rceil$. 
Then  
\[ 
\mathbf{d}_{T,r} 
\bigg(\sum_{k=K}^\infty\tilde A(.,k)z^{-k},0\bigg) 
\le\sum_{k\ge K}\sup_{t\in[0,T]}|\tilde A(t,k)|r^{-k} 	
\]
and so 
\[
\PP\left(\mathbf{d}_{T,r} 
\bigg(\sum_{k=K}^\infty\tilde A(.,k)z^{-k},0\bigg)>\ve\right) 
\le\frac1\ve\sum_{l=1}^{\lceil T/\d\rceil}\sum_{k\ge K}\E\bigg(\sup_{t\in I_l}|\tilde A(t,k)|^2\bigg)^{1/2}r^{-k}.
\]
Recall that 
\[ 
\tilde A(t,k)=\frac1{\sqrt c}\sum_{j=1}^{n(t)}e^{-cj}p(k)^{n(t)-j}\hat M^0_j(k)
\]
which shows that the process $(p(k)^{-n(t)}\tilde{A}(t,k))_{t\ge0}$ is a martingale for each $k\ge0$, with 
\[
\E\big(|p(k)^{-n(t)}\tilde{A}(t,k)|^2\big) 
\le\frac Cc\sum_{j=1}^{n(t)}e^{-2cj}p(k)^{-2j}\E(|\hat M^0_j(k)|^2|\cF_{j-1}) 
\le16Cc\sum_{j=1}^{n(t)}p(k)^{-2j}.
\]
Doob's $L^2$ inequality then gives 
\begin{align}
\label{Il}
\E\bigg(\sup_{t\in I_l}|\tilde A(t,k)|^2\bigg)
&\le p(k)^{2n((l-1)\d)}\E\bigg(\sup_{t\in I_l}|p(k)^{-n(t)}\tilde A(t,k)|^2 \bigg) \nonumber
\\ 
&\le4p(k)^{2n((l-1)\d)}\E\big(| p(k)^{-n(l\d )}\tilde A(l\d,k)|^2\big) \nonumber \\ 
&\le Ccp(k)^{2n((l-1)\d)}\sum_{j=1}^{n(l\d)}p(k)^{-2j} \nonumber \\ 
&\le Cp(k)^{-2(n(l\d)-n((l-1)\d)} ,
\end{align}
for some positive constant $C$, depending on $T$, changing from line to line. 
In the last inequality we have used that $p(k)\le1$ and $cn(l\d)\le T+1$. 
Noting that $n(l\d)-n((l-1)\d)\le1+\d/c$, and that $p(k)\ge e^{-c(k+1)}$ for $\eta\in[0,1]$, we find
\[ 
p(k)^{-2(n(l\d)-n((l-1)\d)}\le p(k)^{-2(1+\d/c)}\le e^{4\d(k+1)} 
\] 
for $\d\ge c$. 
Plugging this into \eqref{Il} gives 
\[ 
\E\bigg(\sup_{t\in I_l}|\tilde A(t,k)|^2\bigg) 
\le C e^{4\d k} , 
\]
and hence 
\[ 
\sup_{c\in(0,\d]}\frac1\ve\sum_{l=1}^{\lfloor T/\d \rfloor} 
\sum_{k\ge K}\E\bigg(\sup_{t\in I_l}|\tilde{A}(t,k)|^2\bigg)^{1/2}r^{-k}
\le\frac{C}{\ve\d}\sum_{k\ge K}(e^{-2\d}r)^{-k} 
\longrightarrow0 
\]
as $K\to\infty$ since $e^{-2\d}r>1$. 
If $\eta<0$, the result follows from the same argument using that,  for $c$ small enough that $\s-c-c|\eta|>0$, we have 
\[
p(k) =e^{-c(k+1)}(1+c\eta ke^{-(\s-c)(k+1)}) 
\ge e^{-c(k+1)-c|\eta|k}. 
\]
\end{proof}

\def\j{
\bb
\subsubsection{Showing that Lemma \ref{le:Kn} is enough}
Before proving this result, let us explain how it can be used to conclude. 
To show convergence in distribution on $(D_{\cH_r}[0,T],\mathbf{d}_{T,r})$,
it suffices to show that for all closed sets $\cC$ in this space it holds 
\[ 
\limsup_{c\to 0}\PP(\tilde\cF\in\cC)\le\PP(\cF\in\cC). 
\]
For $\ve>0$, let $\cC_\ve=\{g\in D_{\cH_r}[0,T]:\mathbf{d}_{T,r}(g,\cC)\le\ve\}$ denote the $\ve$-enlargement of $\cC$. 
Then, by Lemma \ref{le:Kn}, for any fixed $\ve>0$ there exists an integer $K(\ve)<\infty$ such that, for all $K\ge K(\ve)$,
we have 
\[ 
\sup_{c\in(0,\d]}\PP\left(\mathbf{d}_{T,r}\bigg(\sum_{k=K}^\infty\tilde A(t,k)z^{-k},0\bigg)>\ve\right)\le\ve. 
\]
At cost of increasing $K(\ve)$, we can assume that the same holds true with $A(t,k)$ replacing $\tilde A(t,k)$ for all $k$. 
It follows that, for all $K\ge K(\ve)$,
\[ 
\PP(\tilde\cF\in\cC) 
\leq\ve+\PP\left(\sum_{k=0}^{K-1}\tilde A(t,k)z^{-k}\in\cC_\ve\right). 
\]
Moreover, by convergence in distribution of the sequence $(\tilde A(t,k))_{k\ge0}$, for any fixed $K$ we have 
\[ 
\sum_{k=1}^{K-1}\tilde A(t,k)z^{-k}\to\sum_{k=1}^{K-1}A(t,k)z^{-k} 
\]
in distribution on $(D_{\cH_r}[0,T],\mathbf{d}_{T,r})$ as $c\to 0$. 
It follows that 
\[
\begin{split} 
\limsup_{c\to0}\PP(\tilde\cF\in\cC)
&\le\ve+\PP\left(\sum_{k=0}^{K-1}A(t,k)z^{-k}\in\cC_\ve\right)\\ 
&\le2\ve+\PP\left(\sum_{k=0}^{K-1}A(t,k)z^{-k}\in\cC_\ve,\,\mathbf{d}_{T,r}\bigg(\sum_{k\ge K}A(t,k)z^{-k},0\bigg)\le\ve\right)\\ 
&\le2\ve+\PP\left(\cF\in\cC_{2\ve}\right) 
\end{split} 
\]
which gives the desired conclusion by sending $\ve\to0$. 
\eb 
}\jj

\begin{appendices}
\section*{Appendices}

\section{Particle estimates} 
\label{sec:particle}
Let $c\in(0,\infty)$ and $\L\in[0,\infty)$.
Recall that we say a univalent function $F$ from $D_0=\{|z|>1\}$ into $D_0$ has capacity $c$ and regularity $\L$ 
if it satisfies condition \eqref{partcond}, that is to say, for all $z\in D_0$,
$$
\left|\log\left(\frac{F(z)}z\right)-c\frac{z+1}{z-1}\right|\le\frac{\L c^{3/2}|z|}{|z-1|(|z|-1)}.
$$
We show that this in fact implies a similar condition for $F$ but with better decay as $z\to\infty$.
Then we will give some explicit examples of suitable maps $F$.
Finally, we will show that \eqref{partcond} holds whenever the corresponding particle is not too flat.
Only Subsection \ref{PPH} is used in the paper.

\subsection{Precise form of the particle hypothesis}
\label{PPH}
Our particle hypothesis \eqref{partcond} can be reformulated more precisely in terms of the coefficient
$a_0$ in the Laurent expansion \eqref{LAUR}.

\begin{proposition}
\label{PPC}
Suppose that $F$ satisfies \eqref{partcond} and set $\b=a_0/(2c)$.
Then $|\b-1|\le\L\sqrt c/2$ and, for all $z\in D_0$,
\begin{equation}
\label{PREC}
\left|\log\left(\frac{F(z)}z\right)-c-\frac{2c\b}{z-1}\right|\le\frac{6\L c^{3/2}}{|z-1|(|z|-1)}.
\end{equation}
\end{proposition}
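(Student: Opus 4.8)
The plan is to study the auxiliary function
$$
h(z)=\log\left(\frac{F(z)}z\right)-c-\frac{2c\b}{z-1},\qquad |z|>1,
$$
and to prove directly that $|h(z)|\le 6\L c^{3/2}/(|z-1|(|z|-1))$. Since $c\frac{z+1}{z-1}=c+\frac{2c}{z-1}$, this is exactly the estimate \eqref{PREC}. I would organise the argument as: first pin down $\b$, then record the analytic structure of $h$, then conclude by a short case split on $|z|$.

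To obtain $|\b-1|\le\L\sqrt c/2$, I would use the Laurent expansion \eqref{LAUR}, which gives $\log(F(z)/z)-c\frac{z+1}{z-1}=(a_0-2c)z^{-1}+O(z^{-2})$ as $z\to\infty$. Multiplying \eqref{partcond} by $|z|$ and letting $z\to\infty$, and noting that the right-hand side times $|z|$ tends to $\L c^{3/2}$ because $|z|^2/(|z-1|(|z|-1))\to1$, we get $|a_0-2c|\le\L c^{3/2}$, i.e. $|\b-1|\le\L\sqrt c/2$ after dividing by $2c$.

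The key point is the analytic structure. Since $\frac{2c\b}{z-1}=\frac{a_0}{z-1}=a_0z^{-1}+a_0z^{-2}+\cdots$, the $z^{-1}$ terms of $\log(F(z)/z)-c$ and of $\frac{2c\b}{z-1}$ cancel, so $h$ is analytic on $\{|z|>1\}$, extends analytically across $\infty$, and satisfies $h(z)=O(z^{-2})$ there; hence $q(z):=z(z-1)h(z)$ is analytic on $\{|z|>1\}\cup\{\infty\}$. On the other hand, writing $\frac{2c\b}{z-1}=\frac{2c}{z-1}+\frac{2c(\b-1)}{z-1}$ and combining \eqref{partcond} with $|\b-1|\le\L\sqrt c/2$ yields the pointwise bound
$$
|h(z)|\le\frac{\L c^{3/2}|z|}{|z-1|(|z|-1)}+\frac{\L c^{3/2}}{|z-1|},\qquad |z|>1.
$$
For $1<|z|\le2$ this already implies the claim, because $|z|+(|z|-1)=2|z|-1\le6$. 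For $|z|\ge2$ I would instead use $q$: on $|z|=2$ the displayed bound gives $|q(z)|=2|z-1|\,|h(z)|\le6\L c^{3/2}$, so by the maximum modulus principle on the exterior domain $\{|z|\ge2\}$ (valid because $q$ is analytic at $\infty$) we have $|q(z)|\le6\L c^{3/2}$ for all $|z|\ge2$; dividing by $|z|\,|z-1|$ and using $|z|\ge|z|-1$ then gives the claim in this range, completing the proof.

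The main obstacle is conceptual rather than computational: the naive triangle-inequality bound on $h$ carries a spurious factor $|z|$ in the numerator and therefore decays only like $|z|^{-1}$, whereas \eqref{PREC} demands the $|z|^{-2}$-type decay encoded in the absence of that factor. The mechanism for removing it is precisely the double zero of $h$ at infinity, exploited by passing to the genuinely analytic function $q=z(z-1)h$ and invoking the maximum principle. The rest is arithmetic, and the threshold is placed at $|z|=2$ so that the constant $6$ comes out cleanly on both sides.
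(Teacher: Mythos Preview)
Your proof is correct and follows essentially the same approach as the paper's: both establish $|\b-1|\le\L\sqrt c/2$ from the $z\to\infty$ behaviour, obtain the pointwise bound $|h(z)|\le\L c^{3/2}(2|z|-1)/(|z-1|(|z|-1))$ (the paper via an auxiliary function $g(z)=(z-1)(\log(F(z)/z)-c-2c/(z-1))$ and the identity $h(z)=(g(z)-g(\infty))/(z-1)$, you via direct triangle inequality), and then exploit the double zero of $h$ at $\infty$ to apply the maximum principle to $z(z-1)h(z)$ on $\{|z|\ge2\}$. The arithmetic on $|z|=2$ and the case split are identical.
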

\begin{proof}
Set
$$
f(z)=\log\left(\frac{F(z)}z\right),\q
g(z)=(z-1)\left(f(z)-c-\frac{2c}{z-1}\right).
$$
Then $g$ is analytic in $D_0$ and $g(z)\to a_0-2c=2c(\b-1)$ as $z\to\infty$.
Condition \eqref{partcond} implies
$$
|g(z)|\le\L c^{3/2}\frac{|z|}{|z|-1}.
$$
On letting $z\to\infty$, we see that $2c|\b-1|\le\L c^{3/2}$ so $|\b-1|\le\L\sqrt c/2$.
Consider
$$
h(z)=z(g(z)-g(\infty))=z(z-1)
\left(f(z)-c-\frac{2c\b}{z-1}\right).
$$
Then $h$ is analytic in $D_0$ and bounded at $\infty$.
We have
\begin{equation}
\label{FBR}
\left|f(z)-c-\frac{2c\b}{z-1}\right|
\le\frac{|g(z)|+|g(\infty)|}{|z-1|}
\le\L c^{3/2}\frac{2|z|-1}{|z-1|(|z|-1)}
\end{equation}
so
$$
|h(z)|\le\L c^{3/2}\frac{|z|(2|z|-1)}{|z|-1}=6\L c^{3/2}
$$ 
whenever $|z|=2$.
Then, by the maximum principle, for all $|z|\ge2$, we have $|h(z)|\le6\L c^{3/2}$ and hence
$$
\left|f(z)-c-\frac{2c\b}{z-1}\right|
\le\frac{6\L c^{3/2}}{|z-1|(|z|-1)}.
$$
On the other hand \eqref{FBR} implies the same inequality for $1<|z|<2$.
\end{proof}

Note that \eqref{PREC} with $|\b-1|\le\L\sqrt c/2$ implies \eqref{partcond} with $\L$ replaced by $7\L$.
Thus the two conditions are equivalent up to adjustment of the constant by a universal factor.

\subsection{Spread out particles}
Consider for $\g\in\C$ the map on $D_0$ given by
$$
F(z)
=F_{c,\g}(z)
=z\exp\left(c\frac{\g z+1}{\g z-1}\right)
=e^cz\exp\left(\frac{2c}{\g z-1}\right).
$$
It is straightforward to check that $F_{c,\g}$ is univalent into $D_0$ if and only if
\begin{equation*}
\label{GAM}
|\g|\ge\g(c)=1+c+\sqrt{2c+c^2}.
\end{equation*}
Then $F_{c,\g}$ has capacity $c$ and, since
$$
\log\left(\frac{F_{c,\g}(z)}z\right)
=c\frac{\g z+1}{\g z-1}
$$
and
$$
\left|\frac{\g z+1}{\g z-1}-\frac{z+1}{z-1}\right|
=\frac{2|\g-1||z|}{|z-1||\g z-1|}
$$
we see that $F_{c,\g}$ has regularity $\L=2|\g-1|/\sqrt c$.
The corresponding particles $P_{c,\g}$ are spread all around the unit circle, 
as illustrated in the rightmost particle in Figure 1.
When $\g=\g(c)$ we find $F'(1)=0$ so $P_{c,\g(c)}$ has the form of a cusp with endpoint $F(1)$.
Moreover, in the limit $c\to0$ with $\g=\g(c)$, the regularity constant $\L$ stays bounded and $\log F(1)\sim\sqrt{2c}$, 
so the endpoint lies at distance $F(1)-1\sim\sqrt{2c}$ from the unit circle.

 
\subsection{Small particles of a fixed shape}
The following proposition shows that our condition \eqref{partcond} holds generically for particles attached near $1$
which are not too flat.
In particular, it shows that, 
for particles of a fixed shape, such as slits or disks, attached to the unit circle at $1$, in the small diameter limit $\d\to0$,
the capacity $c\to0$ while the regularity constant $\L$ stays bounded,
which is the regime in which our limit theorems apply.

\begin{proposition}
\label{PEST}
There is a constant $C<\infty$ with the following property.
Let $P$ be a basic particle such that, for some $\d_0,\d\in(0,1]$,
\begin{itemize}
\item[{\rm (a)}]$|z|=1+\d_0$ for some $z\in P$,
\item[{\rm (b)}]$|z-1|\le\d$ for all $z\in P$.
\end{itemize}
Then $P$ has capacity $c$ satisfying
$
\d_0^2/C\le c\le C\d^2.
$
Moreover, if $\d\le1/C$, then $P$ has regularity 
$
\L\le C\d/\d_0.
$
\end{proposition}
\begin{proof}
The bounds on $c$ are well known. 
The lower bound relies on Beurling's projection theorem and a comparison with the case of a slit particle.
The upper bound follows from a comparison with the case $P_\d=S_\d\cap D_0$,
where $S_\d$ is the closed disk whose boundary intersects the unit circle orthogonally at $e^{\pm i\th_\d}$
with $\th_\d\in[0,\pi]$ is determined by $|e^{i\th_\d}-1|=\d$.
See Pommerenke \cite{MR1217706}.

We turn to the bound on $\L$.
First we will show, for $a=15\d\le\pi$, we have
\begin{equation}
\label{USCO}
|F(e^{i\th})|=1\q\text{whenever $|\th|\in[a,\pi]$}.
\end{equation}
Then we will show that, if $c\in(0,1]$ and \eqref{USCO} holds with $a\in(0,\pi/2)$, then, for all $|z|>1$, 
\begin{equation}
\label{USCT}
\left|\log\left(\frac{F(z)}z\right)-c\frac{z+1}{z-1}\right|
\le\frac{76ac|z|}{|z-1|(|z|-1)}.
\end{equation}
The desired bound on $\L$ then follows from \eqref{USCO} and \eqref{USCT} and the lower bound on $c$.

We can write
$$
\log\left(\frac{F(z)}{z}\right)=u(z)+iv(z)
$$
where $u$ and $v$ are harmonic functions in $D$ with $u(z)\to c$ and $v(z)\to0$ as $z\to\infty$.
Since $F$ maps into $D_0$, we have $u(e^{i\th})\ge0$ for all $\th \in [0,2\pi )$.
We have to show that $u(e^{i\th})=0$ whenever $|\th|\in[a,\pi]$.
Set
$$
p_\d=\PP_\infty(B\text{ hits }S_\d\text{ before leaving }D_0)
$$
where $B$ is a complex Brownian motion.
Consider the conformal map $f$ of $D_0$ to the upper half-plane $\H$ given by 
$$
f(z)=i\frac{z-1}{z+1}.
$$
Set $b=f(e^{-i\th_\d})=\sin\th_\d/(1+\cos\th_\d)$.
Since $\d\le1$, we have $\th_\d\le\d\pi/3$ and then $b\le2\pi\d/9$.
By conformal invariance,
$$
p_\d
=\PP_i(B\text{ hits $f(S_\d)$ before leaving $\H$})
=2\int_0^{2b/(1-b^2)}\frac{dx}{\pi(1+x^2)}.
$$
Hence $p_\d\le4b/\pi\le8\d/9$.

Now $e^{i\pi}$ is not a limit point of $P$ so $e^{i\pi}=F(e^{i(\pi+\a)})$ for some $\a\in\R$.
Then $u(e^{i(\pi+\a)})=0$ and we can and do choose $\a$ so that $\a+v(e^{i(\pi+\a)})=0$.
Set
$$
\th^+=\sup\{\th\le\pi+\a:u(e^{i\th})>0\},\q
\th^-=\inf\{\th\ge\pi+\a:u(e^{i\th})>0\}-2\pi.
$$
Then $\th^-\le\th^+$.
It will suffice to show that $|\th^\pm|\le15\d$.
For $\th\in[\th^-,\th^+]$, we have $F(e^{i\th})\in S_\d$ so $|\th+v(e^{i\th})|\le\th_\d$.
Set $P^*=\{F(e^{i\th}):\th\in[\th^-,\th^+]\}$. 
Then $P^*\sse S_\d$ so, by conformal invariance,
$$
\frac{\th^+-\th^-}{2\pi}
=\PP_\infty(B\text{ hits $P^*$ on leaving $D_0\sm P$})
\le p_\d.
$$
On the other hand, for $\th,\th'\in[\th^+,\th^-+2\pi]$ with $\th\le\th'$, by conformal invariance,
$$
\frac{\th'-\th}{2\pi}
=\PP_\infty\big(B\text{ hits $\big[e^{i(\th+v(e^{i\th}))},e^{i(\th'+v(e^{i\th'}))}\big]$ on leaving $D_0\sm P$}\big)
\le\frac{\th'+v(e^{i\th'})-\th-v(e^{i\th})}{2\pi}
$$
so $v$ is non-decreasing on $[\th^+,\th^-+2\pi]$, and so
$$
\a+v(e^{i\th^+})
\le\a+v(e^{i(\pi+\a)})=0
\le\a+v(e^{i\th^-}).
$$
Hence
$$
\th^+-\a
\le2\pi p_\d+\th^--\a
\le2\pi p_\d+\th_\d-v(e^{i\th^-})-\a
\le2\pi p_\d+\th_\d
$$
and similarly $\th^--\a\ge-2\pi p_\d-\th_\d$.
So we obtain, for all $\th\in[\th^-,\th^+]$,
$$
|\a+v(e^{i\th})|\le2\th_\d+2\pi p_\d.
$$
Since $v$ is continuous and is non-decreasing on the complementary interval, this inequality then holds for all $\th$.
Now $v$ is bounded and harmonic in $D_0$ with limit $0$ at $\infty$,
so
$$
\int_0^{2\pi}v(e^{i\th})d\th=0.
$$
Hence
$$
|\a|
=\left|\fint_0^{2\pi}(\a+v(e^{i\th}))d\th\right|
\le2\th_\d+2\pi p_\d
$$
and so $|\th^\pm|\le3\th_\d+4\pi p_\d\le41\pi\d/9\le15\d$, as required.

We turn to the proof of \eqref{USCT}.
Assume now that $u(e^{i\th})=0$ whenever $|\th|\in[a,\pi]$.
Since $u$ is harmonic, we have
$$
\fint_0^{2\pi}u(e^{i\th})d\th=c
$$
and, for all $|z|>1$,
$$
u(z)
=\fint_0^{2\pi}u(e^{i\th})\re\left(\frac{z+e^{i\th}}{z-e^{i\th}}\right)d\th
=c+\fint_0^{2\pi}u(e^{i\th})\re\left(\frac{2e^{i\th}}{z-e^{i\th}}\right)d\th.
$$
Hence, using that $u(e^{i\th}) \geq 0$ for all $\th \in [0,2\pi )$,   
$$
|u(z)-c|\le\frac2{|z|-1}\fint_0^{2\pi}u(e^{i\th})d\th=\frac{2c}{|z|-1}
$$
and, since $v(z)\to0$ as $z\to\infty$, a standard argument using the Cauchy--Riemann equations then shows that 
$$
\left|\log\left(\frac{F(z)}z\right)-c\frac{z+1}{z-1}\right|\le\frac{24c}{|z|-1}.
$$
This gives the claimed estimate in the case where $|z-1|\le2a$.
It remains to consider the case where $|z-1|>2a$.
Let $\alpha , \rho $ be defined by 
$$
\fint_0^{2\pi}u(e^{i\th})e^{i\th}d\th=c\rho e^{i\a}.
$$
Then $|\a|\le a$ and $\rho\in[\cos a,1)$.
Now
$$
u(e^{i\a}z)-c-\re\left(\frac{2\rho c}{z-1}\right)
=\fint_0^{2\pi}u(e^{i(\th+\a)})\re\left(\frac{2e^{i\th}}{z-e^{i\th}}-\frac{2e^{i\th}}{z-1}\right)d\th
$$
so
$$
\left|u(e^{i\a}z)-c-\re\left(\frac{2\rho c}{z-1}\right)\right|
\le\frac2{|z-1|(|z|-1)}\fint_0^{2\pi}u(e^{i(\th+\a)})|e^{i\th}-1|d\th
\le\frac{4ac}{|z-1|(|z|-1)}.
$$
The standard argument mentioned above now allows us to deduce that
$$
\left|v(e^{i\a}z)-\im\left(\frac{2\rho c}{z-1}\right)\right|
\le\frac{27}\pi\frac{4ac}{|z-1|(|z|-1)}.
$$
Hence, by a simple calculation,
\begin{equation}\label{LOGFF}
\left|\log\left(\frac{F(z)}z\right)-c-\frac{2\rho c}{e^{-i\a}z-1}\right|
\le\frac{35ac}{|e^{-i\a}z-1|(|z|-1)}.
\end{equation}
Note that
$$
\left|\frac{z+1}{z-1}-1-\frac{2\rho}{e^{-i\a}z-1}\right|
\le\frac{2(1-\rho+|\rho e^{i\a}-1||z|)}{|z-1|(|z|-1)}
\le\frac{6a|z|}{|z-1|(|z|-1)}.
$$
Since $|z-1|>2a$, we have $|e^{-i\a}z-1|\ge|z-1|/2$, so we can deduce from \eqref{LOGFF} that
$$
\left|\log\left(\frac{F(z)}z\right)-c\frac{z+1}{z-1}\right|
\le\frac{76ac|z|}{|z-1|(|z|-1)}.
$$
\end{proof}

\section{Preliminary estimates}\label{sec:preliminaries}
In this section, we gather together some standard results which are used in our proofs.
\subsection{Martingale estimates}\label{sec:martingales}
We recall the following martingale inequality, due to Burkholder. 
\begin{theorem}[\cite{MR365692}, Theorem 21.1]\label{thB}
Let $(X_n)_{n\geq 0}$ be a martingale with respect to the filtration $(\cF_n)_{n\geq 0}$. For $n\geq 1 $ write $\Delta_n = X_n - X_{n-1}$ for the increment process, and define 
	\[ \Delta_n^* = \max_{0\leq k \leq n } |\Delta_k| , 
	\qquad 
	Q_n = \sum_{k=1}^n \E (|\Delta_k  |^2 | \cF_{k-1} ) . 
	\] 
Then for any $p \in [2,\infty ) $ there exists a constant $C=C(p)$ such that for all $n\geq 1$ 
	\[ \Big\| \max_{k\leq n } |X_k |  \Big\|_p ^2 \leq C \big(  \| Q_n \|_{p/2}
	+ \| \D_n ^* \|_p ^2 \big)  . \]
\end{theorem}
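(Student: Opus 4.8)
This is the Burkholder--Rosenthal inequality, a discrete-time companion of the Burkholder--Davis--Gundy inequality; since it is quoted here from the literature, the natural proposal is simply to recall its classical proof. The overall strategy is to pass from the maximal function of $(X_k)$ to its square function, and then to compare that square function with the conditional quadratic variation $Q_n$, paying for the discrepancy with the maximal increment $\D_n^*$.

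The first step is the Burkholder--Davis--Gundy inequality: there is $c_p<\infty$ with
\[
\Big\|\max_{k\le n}|X_k|\Big\|_p\le c_p\|S_n\|_p,\q S_n=\bigg(\sum_{k=1}^n\D_k^2\bigg)^{1/2}.
\]
For $p\in[2,\infty)$ this half of BDG admits an elementary derivation from Doob's $L^p$ maximal inequality together with the expansion $X_n^2=S_n^2+2\sum_{k=1}^nX_{k-1}\D_k$, so the whole argument can be kept essentially self-contained. It then remains to show $\|S_n\|_p^2\le C(\|Q_n\|_{p/2}+\|\D_n^*\|_p^2)$.

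To this end I would decompose $S_n^2=Q_n+N_n$, where $N_n=\sum_{k=1}^n(\D_k^2-\E(\D_k^2|\cF_{k-1}))$ is a martingale with $N_0=0$. Applying BDG to $N$ at exponent $p/2\ge1$ gives $\|N_n\|_{p/2}\le c\|T_n^{1/2}\|_{p/2}$, where $T_n=\sum_{k=1}^n(\D_k^2-\E(\D_k^2|\cF_{k-1}))^2$ is the square function of $N$. Using $(a-b)^2\le2a^2+2b^2$, the bound $\sum_k\D_k^4\le(\D_n^*)^2S_n^2$, and $\sum_k\E(\D_k^2|\cF_{k-1})^2\le Q_n\max_k\E(\D_k^2|\cF_{k-1})\le Q_n^2$, one gets $T_n^{1/2}\le\sqrt2\,(\D_n^*S_n+Q_n)$. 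Hölder's inequality in the form $\|fg\|_{p/2}\le\|f\|_p\|g\|_p$ then yields $\|N_n\|_{p/2}\le c'(\|\D_n^*\|_p\|S_n\|_p+\|Q_n\|_{p/2})$. Writing $a=\|S_n\|_p$, $\b=\|Q_n\|_{p/2}$, $\g=\|\D_n^*\|_p$, the triangle inequality in $L^{p/2}$ gives $a^2=\|S_n^2\|_{p/2}\le\|Q_n\|_{p/2}+\|N_n\|_{p/2}\le(1+c')\b+c'\g a$; solving this quadratic inequality in $a$ gives $a\le c'\g+\sqrt{(1+c')\b}$, hence $a^2\le C(\b+\g^2)$, which combined with the BDG step finishes the proof.

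The \textbf{main obstacle} is closing this last estimate: the error martingale $N_n=S_n^2-Q_n$ can only be controlled in terms of $\|S_n\|_p$ again, so one must make sure this term enters the inequality for $a^2$ \emph{linearly}, so that it can be absorbed by solving a quadratic — a $\|S_n\|_p^2$ on the right would be useless. This is exactly why the hypothesis $p\ge2$ is needed: it makes $\|S_n\|_p^2=\|S_n^2\|_{p/2}$ with $\|\cdot\|_{p/2}$ a genuine (sub-additive) norm, and it is what lets Hölder split $\|\D_n^*S_n\|_{p/2}$ into the product $\|\D_n^*\|_p\|S_n\|_p$ at the critical exponent.
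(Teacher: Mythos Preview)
Your proposal is correct. The paper does not prove this theorem at all: it is quoted verbatim from Burkholder's survey and used as a black box, so there is no ``paper's proof'' to compare against. Your sketch of the classical Burkholder--Rosenthal argument (BDG to pass to the square function, then decompose $S_n^2=Q_n+N_n$, control the auxiliary martingale $N_n$ via BDG at exponent $p/2$, and close the resulting quadratic inequality in $\|S_n\|_p$) is accurate and self-contained, and your remark that $p\ge2$ is exactly what makes $\|\cdot\|_{p/2}$ sub-additive and allows the absorption step is the right diagnosis of where the hypothesis enters.
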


\subsection{Operator estimates}\label{sec:operators}
We note some $L^p$-estimates on operators which act on the set of analytic functions $f$ on $\{|z|>1\}$ which are bounded at $\infty$, and hence have a Laurent expansion
$$
f(z)=\sum_{k=0}^\infty f_kz^{-k}.
$$
Firstly, for the operator $Df(z)=zf'(z)$, by a standard argument using Cauchy's integral formula,
there is an absolute constant $C<\infty$ such that,
for all $p\in\N$ and $1<\rho<r$,
\begin{equation}\label{DEST}
\|Df\|_{p,r}\le\frac{C\rho}{r-\rho}\|f\|_{p,\rho}.
\end{equation}
Secondly, let $L$ be an operator which acts as multiplication by $m_k$ on the the $k$th Laurent coefficient.
Thus
$$
Lf(z)=\sum_{k=0}^\infty m_k f_kz^{-k}.
$$
Assume that there exists a finite constant $M>0$ such that, for all $k\ge0$, 
$$
|m_k|\le M
$$
and, for all integers $K\ge0$, 
$$
\sum_{k=2^K}^{2^{K+1}-1}|m_{k+1}-m_k|\le M.
$$
The Marcinkiewicz multiplier theorem \cite[Vol. II, Theorem 4.14]{MR933759} then asserts that, for all $p\in(1,\infty)$, 
there is a constant $C=C(p)<\infty$ such that, for all $r>1$,
$$
\|Lf\|_{p,r}\le CM\|f\|_{p,r}.
$$
We will use also the following estimate.
Write $\|L\|_{p,\rho\ra r}$ for the smallest constant $K$ such that
$$
\|Lf\|_{p,r}\le K\|f\|_{p,\rho}
$$
for all analytic functions $f$ on $\{|z|>1\}$ bounded at $\infty$.

\begin{proposition}
Let $f$ and $g$ be analytic in $\{|z|>1\}$ and bounded at $\infty$.
Set $f_\th(z)=f(e^{-i\th}z)$.
Let $L$ be a multiplier operator and define
$$
h(z)=\fint_0^{2\pi}|L(f_\th.g)|^2(z)d\th.
$$
Then, for all $r,\rho>1$, we have
\begin{equation}\label{LFG}
\|h\|_{p/2,r}\le\|L\|^2_{p,\rho\ra r}\|g\|^2_{p,\rho}\|f\|^2_{2,\rho}.
\end{equation}
\end{proposition}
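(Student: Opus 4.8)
The plan is to diagonalise the rotational average over $\th$ using the Laurent/Fourier expansion of $f$, which turns \eqref{LFG} into a term-by-term estimate handled by the operator norm $\|L\|_{p,\rho\ra r}$ together with Parseval's identity. The orthogonality produced by the $\th$-average is the one real idea; everything else is bookkeeping.

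First I would expand $f(z)=\sum_{k\ge0}f_kz^{-k}$ and $g(z)=\sum_{j\ge0}g_jz^{-j}$ on $\{|z|>1\}$, so that $f_\th(z)=\sum_{k\ge0}f_ke^{ik\th}z^{-k}$ and
\[
f_\th(z)\,g(z)=\sum_{k\ge0}f_ke^{ik\th}\big(z^{-k}g(z)\big).
\]
Each $z^{-k}g$ is analytic on $\{|z|>1\}$ and bounded at $\infty$, hence in the domain of $L$, and comparing Laurent coefficients gives $L(z^{-k}g)(z)=\sum_{j\ge0}m_{k+j}g_jz^{-(k+j)}$. Using linearity of $L$ and absolute convergence of $\sum_{k,j}|f_k||g_j||z|^{-(k+j)}$ on $\{|z|>1\}$, I would interchange $L$ with the sum over $k$ to get $L(f_\th g)(z)=\sum_{k\ge0}f_ke^{ik\th}\,L(z^{-k}g)(z)$.

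Next, taking squared moduli and averaging in $\th$, the orthonormality of $(e^{ik\th})_{k\ge0}$ in $L^2(d\th/2\pi)$ kills the cross terms, leaving
\[
h(z)=\fint_0^{2\pi}|L(f_\th g)(z)|^2\,d\th=\sum_{k\ge0}|f_k|^2\,|L(z^{-k}g)(z)|^2 .
\]
Then I would apply the triangle inequality for $\|\cdot\|_{p/2,r}$ (which is a norm since $p\ge2$) and the elementary identity $\||u|^2\|_{p/2,r}=\|u\|_{p,r}^2$, followed by the definition of $\|L\|_{p,\rho\ra r}$, to obtain
\[
\|h\|_{p/2,r}\le\sum_{k\ge0}|f_k|^2\,\|L(z^{-k}g)\|_{p,r}^2\le\|L\|_{p,\rho\ra r}^2\sum_{k\ge0}|f_k|^2\,\|z^{-k}g\|_{p,\rho}^2 .
\]
Since $\|z^{-k}g\|_{p,\rho}=\rho^{-k}\|g\|_{p,\rho}$ and, by Parseval on the circle of radius $\rho$, $\sum_{k\ge0}|f_k|^2\rho^{-2k}=\|f\|_{2,\rho}^2$, this is exactly \eqref{LFG}.

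The one step needing care is the justification of the interchanges above (of $L$ with the series in $k$, and of the order of summation in the computation of $h$); I would handle these by fixing $\rho_1\in(1,\min(\rho,r))$ and noting that $\sum_k|f_k||z|^{-k}$ and $\sum_j|g_j||z|^{-j}$ converge for every $|z|>1$ while $|m_k|\le M$, so that every series in play converges absolutely and uniformly on $\{|z|\ge\rho_1\}$. If $\|L\|_{p,\rho\ra r}=\infty$ the statement is vacuous, so this may be assumed finite throughout. I do not expect a genuine obstacle here: the content is the orthogonality cancellation yielding the diagonal formula for $h$, after which the term-by-term operator bound does the rest.
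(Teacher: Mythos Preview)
Your proof is correct and follows essentially the same approach as the paper: expand in Laurent series, use orthogonality of $e^{ik\th}$ to diagonalise the $\th$-average into $h(z)=\sum_k|f_k|^2|L(z^{-k}g)(z)|^2$, then apply the triangle inequality in $\|\cdot\|_{p/2,r}$, the operator norm bound, the identity $\|z^{-k}g\|_{p,\rho}=\rho^{-k}\|g\|_{p,\rho}$, and Parseval. The paper's proof is slightly terser on the convergence justifications you spell out, but the argument is the same.
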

\begin{proof}
We can write
$$
f(z)=\sum_{k=0}^\infty f_kz^{-k},\q
g(z)=\sum_{k=0}^\infty g_kz^{-k},\q
Lf(z)=\sum_{k=0}^\infty m_k f_kz^{-k}.
$$
Then
$$
L(f_\th.g)(z)=\sum_{k=0}^\infty\sum_{j=0}^\infty m_{j+k}f_kg_je^{i\th k}z^{-(k+j)}
$$
so
$$
h(z)=\sum_{k=0}^\infty|f_k|^2|L(\tau_kg)(z)|^2
$$
where $\tau_kg(z)=z^{-k}g(z)$.
Hence
\begin{align*}
\|h\|_{p/2,r}
&\le\sum_{k=0}^\infty|f_k|^2\|L(\tau_kg)\|^2_{p,r}
\le\sum_{k=0}^\infty|f_k|^2\|L\|^2_{p,\rho\ra r}\|\tau_kg\|^2_{p,\rho}\\
&=\sum_{k=0}^\infty|f_k|^2\rho^{-2k}\|L\|^2_{p,\rho\ra r}\|g\|^2_{p,\rho}
=\|L\|^2_{p,\rho\ra r}\|f\|^2_{2,\rho}\|g\|^2_{p,\rho}.
\end{align*}
\end{proof}

\def\j{
\bb
\subsection{Bound on $D$ from Cauchy's integral formula}
We have
$$
|f'(re^{it})|\le\fint_0^{2\pi}\frac{|f(\rho e^{i(\th+t)})|}{|\rho e^{i\th}-r|^2}\rho d\th
$$
so
\begin{align*}
\fint_0^{2\pi}|f'(re^{it})|^pdt
&\le\rho^p
\fint_0^{2\pi}
\dots
\fint_0^{2\pi}
\prod_{m=1}^p
\frac1{|\rho e^{i\th_m}-r|^2}
\fint_0^{2\pi}
\prod_{m=1}^p
|f(\rho e^{i(\th_m+t)})|dtd\th_1\dots d\th_p\\
&\le\rho^p\|f\|_{p,\rho}^p
\fint_0^{2\pi}
\dots
\fint_0^{2\pi}
\prod_{m=1}^p
\frac1{|\rho e^{i\th_m}-r|^2}
d\th_1\dots d\th_p
\le\left(\frac{14\rho\|f\|_{p,\rho}}{r(r-\rho)}\right)^p.
\end{align*}
\eb
}\jj

\section{Computations of first order and error estimates}
\label{sec:comp_est}
In this section, we provide the detailed calculations behind the estimates stated in Section \ref{sec:estimates}. We only explicitly state estimates for $\eta \in [0,1]$, taking advantage of the fact that certain constants can be chosen uniformly over such values of $\eta$. Similar estimates hold for $\eta \in (-\infty , 0)$, and we leave the necessary adjustments to the reader. Furthermore, throughout this section we assume that $c, \s  \leq 1$. This assumption can be relaxed at the cost of the absolute constants. 

\subsection{Estimates on the attachment measure $h_n(\th)$}
We begin by obtaining estimates on $h_n(\th)$, defined in \eqref{hn}.
By an elementary calculation, for all $\eta\in (-\infty,1]$ and $w\in\C\sm\{0\}$, we can write
$$
|w|^{-\eta}=1-\eta\re(w-1)+\ve_1(w)
$$
with
$$
|\ve_1(w)|\le C (|w|^{-\eta}\vee1)|w-1|^2
$$
for some constant $C < \infty$ depending only on $\eta$. We will see below that $C \leq 24$ for all $\eta \in [0,1]$; the case $\eta<0$ requires minor adjustments to take into account the dependence of $C$ on $\eta$, which we leave to the reader. 
Take 
$$
w=e^{-c(n-1)}\Phi_{n-1}'(e^{\s+i\th})=\tilde\Phi_{n-1}'(e^{\s+i\th})+1
$$ 
to obtain
$$
e^{c(n-1)\eta}|\Phi_{n-1}'(e^{\s+i\th})|^{-\eta}
=1-\eta\re\tilde\Phi_{n-1}'(e^{\s+i\th})+\ve_2(\th)
$$
where
$$
\ve_2(\th)
=\ve_1(w).
$$
(Here and throughout the remainder of this section, $n$ is fixed and the dependence of error terms on $n$ is suppressed in the notation).
Then
\begin{equation}
\label{e3e4}
e^{c(n-1)\eta}Z_{n}
=\fint_0^{2\pi}e^{c(n-1)\eta}|\Phi_{n-1}'(e^{\s+i\th})|^{-\eta}d\th
=1+\ve_3
=\frac1{1+\ve_4}
\end{equation}
where
$$
\ve_3
=\fint_0^{2\pi}\ve_2(\th)d\th,
\q
\ve_4=-\frac{\ve_3}{1+\ve_3}.
$$
Here we used the fact that 
$$
\int_0^{2\pi}\re\tilde\Phi'_{n-1}(e^{\s+i\th})d\th=0
$$
which holds because $\tilde\Phi'_{n-1}(z)$ is analytic in $\{|z|>1\}$ and vanishes as $z\to\infty$.
Hence
\begin{equation}\label{HN}
h_{n}(\th)=1-\eta\re\tilde\Phi'_{n-1}(e^{\s+i\th})+\ve_5(\th)
\end{equation}
where
$$
\ve_5(\th)
=\ve_2(\th)
+(1-\eta\re\tilde\Phi'_{n-1}(e^{\s+i\th}))\ve_4+\ve_2(\th)\ve_4.
$$
Recall the definition of $N_0$ from \eqref{eq:N0_def}. Then, for all $n\le N_0$ and all $\th\in[0,2\pi)$,
$$
|\tilde\Phi'_{n-1}(e^{\s+i\th})|\le\frac18,\q 
\frac78 \le e^{-c(n-1)}|\Phi'_{n-1}(e^{\s+i\th})|\le\frac98
$$
so
$$
|\ve_2(\th)|\le
\frac{192}7
|\tilde\Phi'_{n-1}(e^{\s+i\th})|^2
$$
and
$$
|\ve_3|
=\left|\fint_0^{2\pi}\ve_2(\th)d\th\right|
\le\frac{192}7\|\tilde\Phi'_{n-1}\|^2_{2,e^\s}
\le\frac37.
$$
Using \eqref{e3e4} to bound $|1+\ve_3|$ directly,
$$
|\ve_4|\le\frac98|\ve_3|\le \frac{216}{7}\|\tilde\Phi'_{n-1}\|^2_{2,e^\s}
$$
and
\begin{equation}
\label{eq:e5_est}
|\ve_5(\th)|
\le42|\tilde\Phi'_{n-1}(e^{\s+i\th})|^2+35\|\tilde\Phi'_{n-1}\|_{2,e^\s}^2
\le 77 \delta_0^2.
\end{equation}

\subsection{Estimates on the increment $\Delta_n(\th,z)$}

We now move to analysing the increment $\Delta_n(\th,z)$, defined in \eqref{eq:Delta_def}. Recall from \eqref{eq:wn_def} that
$$
\Delta_n(\th,z) = m_n(\th,z) + w_n(\th,z) 
$$
where $m_n$ is defined in\eqref{eq:mn_def}. By \eqref{PFMB} we can write
\begin{align*}
w_n(\th,z)&=\ve_6(\th,z)+\ve_7(\th,z)
\end{align*}
where
\[\begin{split} 
\ve_6(\th,z)
&=\left(\log\frac{F(e^{-i\th}z)}{e^{-i\th}z}-c\right)
\int_0^1(D\Phi_{n-1}(F_{s,\th}(z))-e^{cn}z)ds\\
& = \left(\log\frac{F(e^{-i\th}z)}{e^{-i\th}z}-c\right)
\int_0^1 e^{c(n-1)} \big( \Psi_{n-1} (F_{s,\th}(z)) + F_{s,\th} (z) -e^c z \big) ds , \\
\ve_7(\th,z)
&=e^{cn}z\left(\log\frac{F(e^{-i\th}z)}{e^{-i\th}z}-c-\frac{2c\b}{e^{-i\th}z-1}\right). 
\end{split} \]
Note that $\ve_6(\th,\infty)=\ve_7(\th,\infty)=0$ and
\begin{equation}\label{E7B}
|\ve_7(\th,z)|\le\frac{6\L e^{cn}c^{3/2}|z|}{|e^{-i\th}z-1|(|z|-1)}. 
\end{equation}
By some straightforward estimation, we obtain a constant $C=C(\L)<\infty$ such that, for all $c\in(0,1]$ and all $|z|>1$,
$$
|z|\le|F_{s,\th}(z)|\le e^{C\sqrt c}|z|
$$
and, for $|z|\ge1+\sqrt c$,
\begin{equation}\label{GTHB}
|F_{s,\th}(z)-e^cz|\le\frac{Cc|z|}{|e^{-i\th}z-1|}. 
\end{equation}
Hence, for $|z|\ge1+\sqrt c$, 
\begin{equation}\label{E6B}
|\ve_6(\th,z)|\le\frac{Cce^{cn}}{|e^{-i\th}z-1|}\int_0^1|\Psi_{n-1}(F_{s,\th}(z))|ds+\frac{Cc^2e^{cn}|z|}{|e^{-i\th}z-1|^2}.
\end{equation}
We combine \eqref{HN} and \eqref{PFMB} to obtain
$$
A_n(z)
=\fint_0^{2\pi}
\bigg(-\eta\re\tilde\Phi'_{n-1}(e^{\s+i\th})+\ve_5(\th)\bigg)
\left(\frac{2c\b e^{cn}z}{ze^{-i\th}-1}+\ve_6(\th,z)+\ve_7(\th,z)\right)d\th.
$$
By Cauchy's integral formula
$$
\fint_0^{2\pi}\eta\re\tilde\Phi'_{n-1}(e^{\s+i\th})\frac{2c\b e^{cn}z}{ze^{-i\th}-1}d\th
=c\b\eta e^{cn}z\tilde\Phi'_{n-1}(e^\s z).
$$
So we obtain
\begin{equation}\label{AN}
A_n(z)
=-c\eta e^{cn}z\tilde\Phi'_{n-1}(e^\s z)+R_n(z)
\end{equation}
where
\begin{align*}
R_n(z)
&=\fint_0^{2\pi}
\left(
\frac{2c\b e^{cn}z}{ze^{-i\th}-1}\ve_5(\th) 
+\left(-\eta\re\tilde\Phi'_{n-1}(e^{\s+i\th})+\ve_5(\th)\right)
(\ve_6(\th,z)+\ve_7(\th,z))
\right)d\th\\
&\q\q\q\q\q\q\q\q-c(\b-1)\eta e^{cn}z\tilde\Phi'_{n-1}(e^\s z).
\end{align*}

Now suppose $n \leq N_0$. Then, using \eqref{E7B} and \eqref{E6B}, for $|z|=r$ with $r\ge1+\sqrt c$, we obtain \eqref{eq:Rn_est}.

\subsection{A more refined decomposition}

The estimate \eqref{eq:wn_bound}, is not sufficiently tight for all our needs. In this section, we give a decomposition of $w_n$, which can be used for more refined estimates.

Set
$$
l(z)=\log\frac{F(z)}z-c,\q
q(z)=l(z)-\frac{2c \b}{ z-1}.
$$
Then, for $|z|=r\ge1+\sqrt c$,
$$
|l(z)|\le\frac{Cc}{|z-1|},\q
|q(z)|\le\frac{Cc^{3/2}}{(r-1)|z-1|}.
$$
It follows that we can write
\[
F(z) = e^c z + \frac{2c \beta e^c z}{z-1} + \tilde{q}(z)
\]
where
\[
|\tilde{q}(z)|\le\frac{Cc^{3/2}|z|}{(r-1)|z-1|}.
\]
We will write 
$$
l(\th,z)=l(e^{-i\th}z),\q
q(\th,z)=q(e^{-i\th}z),\q 
\tilde q(\th,z)=e^{i\th}\tilde q(e^{-i\th}z),\q
F(\th,z)=e^{i\th}F(e^{-i\th}z).
$$
Recall the interpolation \eqref{Gdef}, which we can write as
$$
F_{s,\th}(z)=e^cz\exp(sl(\th,z)).
$$
We will use the following Taylor expansion
\[
\tilde\Phi_{n-1}(F(\th,z))=\sum_{k=0}^m\frac{l(\th,z)^k}{k!}(D^k\tilde\Phi_{n-1})(e^cz)+l(\th,z)^{m+1}\int_0^1\frac{(1-s)^m}{m!}(D^{m+1}\tilde\Phi_{n-1})(F_{s,\th}(z))ds
\]
where $m\in\N$.
Hence
\begin{align*}
\Delta_{n}(\th, z) &= e^{c(n-1)}\left ( \tilde\Phi_{n-1}(F(\th,z))+F(\th,z) - \tilde\Phi_{n-1}(e^c z) - e^{c}z \right ) \\
 &= e^{c(n-1)} \left ( F(\th,z)-e^{c}z + \sum_{k=1}^m \frac{l(\th,z)^k}{k!}(D^k \tilde \Phi_{n-1})(e^cz) \right ) \\
 &\q\q + e^{c(n-1)} l(\th,z)^{m+1} \int_0^1 \frac{(1-s)^m}{m!}(D^{m+1}\tilde \Phi_{n-1})(F_{s,\th}(z)) ds \\
 &= e^{c(n-1)} \left ( \frac{2c \beta e^c z}{e^{-i\th}z-1} + \tilde{q}(\th,z) + \sum_{k=1}^m \frac{l(\th,z)^k}{k!}(D^{k-1} \Psi_{n-1})(e^cz) \right ) \\
 &\q\q+e^{c(n-1)} l(\th,z)^{m+1} \int_0^1 \frac{(1-s)^m}{m!}(D^{m}\Psi_{n-1})(F_{s,\th}(z)) ds
\end{align*}
and so
\begin{align}
\label{eq:wn_refined}
\notag w_n(\th,z) = & \ e^{c(n-1)}\tilde{q}(\th,z) + e^{c(n-1)} \sum_{k=1}^m \frac{l(\th,z)^k}{k!}(D^{k-1} \Psi_{n-1})(e^cz)  \\
& \ +e^{c(n-1)} l(\th,z)^{m+1} \int_0^1 \frac{(1-s)^m}{m!}(D^{m}\Psi_{n-1})(F_{s,\th}(z)) ds.
\end{align}

\def\j{
\bb
\subsection{Details of error estimate for $|w|^{-\eta}$, $\eta \in [0,1]$}
Set $h=x+iy=w-1$.
Consider first the case $|h|\le1/4$.
Note that
$$
|w|^2=1+t+s
$$
where 
$$
s=x^2+y^2=|h|^2\le1/16
$$
and
$$
t=2x,\q |t|\le1/4+4x^2\le1/2.
$$
Now
$$
|w|^{-\eta}-1+\eta x
=(1+s+t)^{-\eta/2}
-(1+t)^{-\eta/2}
+(1+t)^{-\eta/2}-1+(\eta/2)t
$$
so
\begin{align*}
||w|^{-\eta}-1+\eta x|
&\le\frac\eta2s\left(\frac12\right)^{-\eta/2-1}
+\frac12(4x^2)\frac\eta2\left(\frac\eta2+1\right)\left(\frac12\right)^{-\eta/2-2}\\
&\le\left(\frac\eta22^{\eta/2+1}+\frac12.4\frac\eta2\left(\frac\eta2+1\right)2^{\eta/2+2}\right)|h|^2
\le14|h|^2.
\end{align*}
On the other hand, we have also
$$
||w|^{-\eta}-1+\eta x|
\le|w|^{-\eta}\vee1+|x|
$$
and
$$
|x|\le\frac14\vee\left(2|h|\right)^2
$$
so, if $|h|>1/4$, then
$$
||w|^{-\eta}-1+\eta x|
\le\frac54\left(|w|^{-\eta}\vee1\right)+4|h|^2
\le24(|w|^{-\eta}\vee1)|h|^2.
$$

\subsection{Next order terms for $\eta=1$} 
For $w=1+x+iy$, 
$$
\ve_1(w)=x^2-\frac{y^2}2+O(|w-1|^3)
$$
which we apply with 
$$
x+iy=x(\th)+iy(\th)=
\tilde\Phi_{n-1}'(e^{\s+i\th})
$$ 
to obtain $\ve_2(\th)$.
So
$$
\ve_3
=\fint_0^{2\pi}\ve_2(\th)d\th
=\fint_0^{2\pi}\left(x(\th)^2-\frac12y(\th)^2\right)d\th+\tilde\ve_3
=\frac14\fint_0^{2\pi}|\tilde\Phi_{n-1}'(e^{\s+i\th})|^2d\th.
+\tilde\ve_3
$$
where 
$$
|\tilde\ve_3|
\le C\fint_0^{2\pi}|\tilde\Phi_{n-1}'(e^{\s+i\th})|^3d\th.
$$
Here we used, for $f$ analytic in $\{|z|>1\|$, 
$$
\fint_0^{2\pi}f(e^{\s+i\th})^2d\th
=\frac1{2\pi i}\int_{|z|=\s}\frac{f(z)^2}zdz=0.
$$
So we obtain (in the tube)
$$
h_{n-1}(\th)
=1-\eta x(\th)+x(\th)^2-\frac{y(\th)^2}2-\fint_0^{2\pi}\left(x(\th)^2-\frac{y(\th)^2}2\right)d\th
+O(\|\tilde\Phi'_{n-1}\|^3_{\infty,e^\s}).
$$

\subsection{Integral formula for the real part of an analytic function}
For $f$ analytic in $\{|z|>1\}$ vanishing at $\infty$, and for $|z|>1$,
$$
\fint_0^{2\pi}\frac{f(e^{\s+i\th})}{1-ze^{-i\th}}d\th
=\frac1{2\pi i}\int_{|w|=1}\frac{f(e^\s w)}{w-z}dw=-f(e^\s z)
$$
while
$$
\fint_0^{2\pi}\frac{\overline{f(e^{\s+i\th})}}{1-ze^{-i\th}}d\th
=\fint_0^{2\pi}\frac{f^*(e^{-\s+i\th})}{1-ze^{-i\th}}d\th
=\frac1{2\pi i}\int_{|w|=1}\frac{f^*(e^{-\s}w)}{w-z}dw=0
$$
where $f^*(w)=\overline{f(1/\bar w)}$, which is analytic in $\{|w|<1\}$.
Hence
$$
\fint_0^{2\pi}\frac{\re f(e^{\s+i\th})}{ze^{-i\th}-1}d\th=\frac12f(e^\s z).
$$
\eb
}\jj

\def\j{
\bb
\subsection{Additional notes -- unmodified, containing $\g$}
Set
$$
l(z)=\log\frac{F(z)}z-c,\q
q(z)=l(z)-\frac{2c}{\g z-1}.
$$
Then, for $|z|=r\ge1+\sqrt c$,
$$
|l(z)|\le\frac{Cc}{|\g z-1|},\q
|q(z)|\le\frac{Cc^{3/2}}{(r-1)|\g z-1|}.
$$
We will write $l(\th,z)=l(e^{-i\th}z)$ and $q(\th,z)=q(e^{-i\th}z)$ and $F(\th,z)=e^{i\th}F(e^{-i\th}z)$.
As we showed above, 
$$
\Delta_n(\th,z)
=l(\th,z)\int_0^1D\Phi_{n-1}(\g_{\th,z}(s))ds
=e^{c(n-1)}\left(l(\th,z)\int_0^1\Psi_{n-1}(\g_{\th,z}(s))ds+F(\th,z)-e^cz\right).
$$
Let us use the interpolation trick again to write
$$
\Psi_{n-1}(\g_{\th,z}(s))
=\Psi_{n-1}(e^cz)
+l(\th,z)\int_0^sD\Psi_{n-1}(\g_{\th,z}(t))dt
$$
and hence
$$
\Delta_n(\th,z)
=e^{c(n-1)}\left(l(\th,z)\Psi_{n-1}(e^cz)+F(\th,z)-e^cz
+l(\th,z)^2\int_0^1D\Psi_{n-1}(\g_{\th,z}(s))(1-s)ds\right).
$$
Also, split the first term on the right using
$$
l(\th,z)
=\frac{2c}{\g e^{-i\th}z-1}
+q(\th,z).
$$
We obtain
\begin{align*}
\Delta_n(\th,z)
&=e^{c(n-1)}\left(\frac{2c}{\g e^{-i\th}z-1}\Psi_{n-1}(e^cz)
+q(\th,z)\Psi_{n-1}(e^cz)\right.\\
&\q\q\q\q\q\q\left.+F(\th,z)-e^cz
+l(\th,z)^2\int_0^1D^2\Phi_{n-1}(\g_{\th,z}(s))(1-s)ds\right).
\end{align*}
Recall that
$$
A_n(z)
=\fint_0^{2\pi}\Delta_n(\th,z)(h_{n}(\th)-1)d\th
$$
and we define $\ve_5(\th)$ and $\ve_8(z)$ by
$$
h_{n}(\th)-1
=-\eta\re\tilde\Phi_{n-1}'(e^{\s+i\th})
+\ve_5(\th)
$$
and
$$
A_n(z)
=-c\eta e^{cn}z\tilde\Phi_{n-1}'(e^\s\g z)
+\ve_8(z).
$$
On substituting the new expression for $\Delta_n(\th,z)$, we find
\begin{align*}
\ve_8(z)
&=-c\eta e^{cn}\tilde\Phi_{n-1}'(e^\s\g z)e^{-c}\Psi_{n-1}(e^cz)\\
&\q\q+D\Phi_{n-1}(e^cz)\fint_0^{2\pi}\frac{2c}{\g e^{-i\th}z-1}\ve_5(\th)d\th\\
&\q\q+\fint_0^{2\pi}\left(q(\th,z)D\Phi_{n-1}(e^cz)+l(\th,z)^2\int_0^1D^2\Phi_{n-1}(\g_{\th,z}(s))(1-s)ds\right)(h_{n}(\th)-1)d\th
\end{align*}
and
\begin{align*}
\ve_8(z)-\ve_8(\infty)
&=-c\eta e^{cn}\tilde\Phi_{n-1}'(e^\s\g z)e^{-c}\Psi_{n-1}(e^cz)\\
&\q\q
+2ce^{c(n-1)}\Psi_{n-1}(e^cz)\fint_0^{2\pi}\frac1{\g e^{-i\th}z-1}\ve_5(\th)d\th\\
&\q\q+2ce^{cn}\fint_0^{2\pi}\frac{e^{i\th}}{\g(\g e^{-i\th}z-1)}\ve_5(\th)d\th\\
&\q\q+\fint_0^{2\pi}\left(q(\th,z)D\Phi_{n-1}(e^cz)+l(\th,z)^2\int_0^1D^2\Phi_{n-1}(F_{s,\th}(z))(1-s)ds\right)(h_{n}(\th)-1)d\th
\end{align*}
\eb
}\jj

\section{Proofs of second order bounds} \label{sec:proof_est}

\subsection{Estimation of the second martingale term}\label{sec:proof_W_est}

In this section, we give the proof of Lemma \ref{lemma:wbound}, which bounds the second martingale term $\cW_n(z)$ in the decomposition \eqref{DECOMP} of the differentiated fluctuation process, which is given by
$$
\cW_n(z)=e^{-cn}\sum_{j=1}^nP_0^{n-j}DW_j(e^{c(1-\eta)(n-j)}z).
$$

By Burkholder's inequality, for all $p\in[2,\infty)$, there is a constant $C=C(p)<\infty$ such that
\begin{align*}
&\|\cW_n(z)1_{\{n\le N_0\}}\|_p^2 \\
&\q \q \q \le \ Ce^{-2cn}\left ( \| \max_{1 \leq j \leq n} X^W_{j,n}(e^{c(1-\eta)(n-j)}z)1_{\{j\le N_0\}}\|_p^2 + \sum_{j=1}^n\|Q^W_{j,n}(e^{c(1-\eta)(n-j)}z)1_{\{j\le N_0\}}\|_{p/2} \right )
\end{align*}
where
$$
X^W_{j,n}(z)
=|P_0^{n-j}DW_j(e^{c(1-\eta)(n-j)}z)| \quad \mbox{ and } \quad Q_{j,n}^W(z)
=\E(|P_0^{n-j}DW_j(e^{c(1-\eta)(n-j)}z)|^2|\cF_{j-1}).
$$
Then, on taking the $\|\cdot\|_{p/2,r}$-norm, we deduce that
\begin{equation}\label{MEE}
\tn\cW_n1_{\{n\le N_0\}}\tn_{p,r}^2
\le Ce^{-2cn}\left (  \tn \max_{1 \leq j \leq n } X^W_{j,n}1_{\{j\le N_0\}}\tn_{p,r}^2 +  \sum_{j=1}^n
\tn Q^W_{j,n}1_{\{j\le N_0\}}\tn_{p/2,r_{n-j}} \right ).
\end{equation}
While it is possible to use the estimate \eqref{eq:wn_bound} to bound this expression, the bound is  only sufficient to prove our final result for $\s \gg c^{1/3}$. In order to obtain a bound that works all the way down to $\s \gg c^{1/2}$, we need the refined decomposition \eqref{eq:wn_refined}, for some $m \in \N$ which we will choose later. 
Define
$$
U^i_n(z)=u^i_n(\Th_n,z)-\fint_0^{2\pi}u^i_n(\th,z)h_{n}(\th)d\th
$$
where
\begin{align*}
u^0_n(\th,z)&=e^{c(n-1)}\tilde{q}(\th,z),\\
u^1_n(\th,z)&=e^{c(n-1)}\sum_{k=1}^m\frac{l(\th,z)^k}{k!}D^{k-1}\Psi_{n-1}(e^cz),\\
u^2_n(\th,z)&=e^{c(n-1)}l(\th,z)^{m+1}\int_0^1\frac{(1-s)^m}{m!}D^m\Psi_{n-1}(F_{s,\th}(z))ds.
\end{align*}
Then $W_n=U^0_n+U^1_n+U_n^2$ so, with obvious notation,
$$
Q_{j,n}^W\le3(Q^0_{j,n}+Q^1_{j,n}+Q^2_{j,n})
$$
and so
$$
\|Q_{j,n}^W\|_{p/2,r}\le3\left(\|Q^0_{j,n}\|_{p/2,r}+\|Q^1_{j,n}\|_{p/2,r}+\|Q^2_{j,n}\|_{p/2,r}\right).
$$

We estimate the terms on the right.
First, for $j\le N_0$, we have
\begin{align*}
Q_{j,n}^i(z)
&=\E(|P_0^{n-j}DU^i_j(z)|^2|\cF_{j-1})
\le\E(|P_0^{n-j}Du^i_j(\Th_j,z)|^2|\cF_{j-1})\\
&=\fint_0^{2\pi}|P^{n-j}_0Du^i_j(\th,z)|^2h_{j}(\th)d\th
\le3\fint_0^{2\pi}|P^{n-j}_0Du^i_j(\th,z)|^2d\th.
\end{align*}

We start with $i=0$.
Then for all $|z|=r$ and $j\le N_0$,
\begin{align*}
Q_{j,n}^0(z)
&\le3\fint_0^{2\pi}|P^{n-j}_0D (e^{c(j-1)} \tilde{q}(e^{-i\th}z))|^2d\th \\
&=3e^{2c(j-1)}\|P_0^{n-j}D\tilde{q}\|_{2,r}^2\\
&\le Cc^3e^{2cj}\frac {r^3}{(r-1)^5}.
\end{align*}
Here, and in what follows, $C<\infty$ is a constant, which may only depend on $\Lambda$, $\eta$, $m$ and $p$, and which may change from line to line.

Next, consider $i=1$.
Note that
\[
Q^1_{j,n} \leq Ce^{2c(j-1)} \sum_{k=1}^m \fint_0^{2 \pi}|P_0^{n-j}D\left (l(\th,z)^kD^{k-1}\Psi_{j-1}(e^cz) \right )|^2 d\th.
\]
\def\j{
Suppose that 
\[
D^{k-1}\Psi_{j-1}(e^cz) = \sum_{x=1}^{\infty} a_x z^{-x}
\]
and
\[
l(z)^k = \sum_{y=1}^{\infty} b_yz^{-y}.
\]
Then 
\begin{align*}
P_0^{n-j}D\left (l(\th,z)^kD^{k-1}\Psi_{j-1}(e^cz) \right )
= -\sum_{x=1}^{\infty} \sum_{y=1}^{\infty} (x+y)p_0(x+y)^{n-j}a_x b_y e^{i \th y} z^{-(x+y)}.
\end{align*}
Hence
\begin{align*}
&\fint_0^{2 \pi}|P_0^{n-j}D\left (l(\th,z)^kD^{k-1}\Psi_{j-1}(e^cz) \right )|^2 d\th \\
&= \sum_{x',x=1}^{\infty} \sum_{y=1}^{\infty} (x+y)p_0(x+y)^{n-j}a_x b_y z^{-(x+y)} (x'+y)p_0(x'+y)^{n-j}\bar{a}_{x'} \bar{b}_y \bar{z}^{-(x'+y)} \\
&= \sum_{y=1}^{\infty} |b_y|^2 |P_0^{n-j}D\left (z^{-y} D^{k-1}\Psi_{j-1}(e^cz) \right )|^2
\end{align*}
and so, setting $\rho=(r+1)/2$ and $\tilde{\rho}=(3r+1)/4$,
\begin{align*}
\|Q^1_{j,n}\|_{p/2,r}
&\le C e^{2c(j-1)} \sum_{k=1}^m \sum_{y=1}^{\infty} |b_y|^2 \| |P_0^{n-j}D\left (z^{-y} D^{k-1}\Psi_{j-1}(e^cz) \right )|^2 \|_{p/2,r} \\
&= C e^{2c(j-1)} \sum_{k=1}^m \sum_{y=1}^{\infty} |b_y|^2 \| P_0^{n-j}D\left (z^{-y} D^{k-1}\Psi_{j-1}(e^cz) \right ) \|_{p,r}^2 \\
&\le C e^{2c(j-1)} \left (\frac{r}{r-1}\right )^2\sum_{k=1}^m \sum_{y=1}^{\infty} |b_y|^2 \| z^{-y} D^{k-1}\Psi_{j-1}(e^cz)  \|_{p,\tilde{\rho}}^2 \\
&\le Ce^{2c(j-1)}\left(\frac{r}{r-1}\right)^2\sum_{k=1}^m\sum_{y=1}^\infty|b_y|^2\tilde{\rho}^{-2y}\left(\frac{r}{r-1}\right)^{2(k-1)}\| \Psi_{j-1} \|_{p,\rho}^2 \\
&= C e^{2c(j-1)} \| \Psi_{j-1} \|_{p,\rho}^2 \sum_{k=1}^m \left ( \frac{r}{r-1}\right )^{2k} \|l(z)^k \|_{2,\tilde{\rho}}^2.
\end{align*}
}
We use the estimates \eqref{DEST},\eqref{LFG} and \eqref{PIE} to see that, for $\rho=(r+1)/2$ and $\tilde{\rho}=(3r+1)/4$, 
$$
\|Q^1_{j,n}\|_{p/2,r}
\le Ce^{2c(j-1)}\|\Psi_{j-1}\|_{p,\rho}^2\sum_{k=1}^m\left(\frac r{r-1}\right)^{2k}\|l^k\|_{2,\tilde{\rho}}^2.
$$
It follows from \eqref{partcond} that, for $|z|\ge1+\sqrt c$,
$$
|l(z)|
\le\frac{2(\L+1)c}{|z-1|}.
$$
Hence
\begin{align*}
\|Q^1_{j,n}\|_{p/2,r}
&\le Ce^{2cj}\|\Psi_{j-1}\|_{p,\rho}^2\sum_{k=1}^m\left(\frac r{r-1}\right)^{2k}\frac{c^{2k}}{r(r-1)^{2k-1}}\\
&\le Ce^{2cj}\|\Psi_{j-1}\|_{p,\rho}^2c^2\frac r{(r-1)^3}
\end{align*}
where in the last line we used that $r\ge1+\sqrt c$.

Finally we turn to $i=2$.
Then, for $|z|=r\ge1+\sqrt c$ and $\tilde\rho=(3r+1)/4$, using Jensen's inequality and the inequalities \eqref{DEST} and \eqref{PIE},
\begin{align*}
\|Q^2_{j,n}\|_{p/2,r}^{p/2}
&\le 3^{p/2}\fint_0^{2\pi}\left(\fint_0^{2\pi}|P^{n-j}_0Du^2_j(\th,re^{it})|^2d\th\right)^{p/2}dt\\
&\le3^{p/2}\fint_0^{2\pi}\fint_0^{2\pi}|P^{n-j}_0Du^2_j(\th,re^{it})|^pdtd\th\\
&\le C\left(\frac r{r-1}\right)^p \fint_0^{2\pi}\fint_0^{2\pi}|u^2_j(\th,\tilde\rho e^{it})|^pdtd\th\\
&=C\left(\frac r{r-1}\right)^p \fint_0^{2\pi}\fint_0^{2\pi}|u^2_j(\th+t,\tilde\rho e^{it})|^pdtd\th.
\end{align*}
Note that
$$
u^2_j(\th+t,\tilde\rho e^{it})=e^{c(j-1)} \left(\log\frac{F(\tilde\rho e^{-i\th})}{\tilde\rho e^{-i\th}}-c\right)^{m+1}
\int_0^1\frac{(1-s)^m}{m!}(D^{m}\Psi_{j-1})(e^{i t}F_{s,\th}(\tilde\rho ))ds.
$$
We use the inequalities \eqref{partcond}, $|F_{s,\th}(\tilde\rho)|\ge\tilde\rho$ and \eqref{GTHB} to see that, for $\rho=(r+1)/2\ge1+\sqrt c$,
$$
\|D^{m}\Psi_{j-1}\|_{p,|F_{s,\th}(\tilde\rho)|}\le C\left(\frac r{r-1}\right)^m\|\Psi_{j-1}\|_{p,\rho}.
$$
Hence we obtain
\begin{align*}
\fint_0^{2\pi}|u^2_j(\th+t,\tilde\rho e^{it})|^pdt
&\le C e^{c(j-1)p} \left|\frac{c}{\tilde\rho e^{-i\th}-1}\right|^{p(m+1)}
\left( \frac{r}{r-1}\right )^{mp}\|\Psi_{j-1}\|_{p,\rho}^p
\end{align*}
and then
\begin{align*}
&\fint_0^{2\pi}\fint_0^{2\pi}|u^2_j(\th+t,\tilde{\rho} e^{it})|^pdtd\th\\
&\q\q\le C \left ( \frac{c^{m+1}e^{c(j-1)}\|\Psi_{j-1}\|_{p,\rho} r^m}{(r-1)^m} \right )^p
\fint_0^{2\pi}\frac1{|\rho e^{-i\th}-1|^{(m+1)p}}d\th.
\end{align*}
Hence
\begin{align*}
\|Q^2_{j,n}\|_{p/2,r}^{p/2}
\le C \left ( \frac{c^{m+1}e^{c(j-1)}\|\Psi_{j-1}\|_{p,\rho} r^{m+1}}{(r-1)^{m+1}} \right )^p
\frac1{\rho(\rho-1)^{(m+1)p-1}}
\end{align*}
and hence
$$
\|Q^2_{j,n}\|_{p/2,r}
\le Cc^{2(m+1)}e^{2cj}\frac {r^{2(m+1)-2/p}}{(r-1)^{4(m+1)-2/p}}
\|\Psi_{j-1}\|^2_{p,\rho}.
$$
Fix $\ve\in(0,1/2)$ and assume that $r\ge1+c^{1/2-\ve}$.
Then, on choosing $m=\lceil 1/(8\ve) \rceil$, we obtain, for all $p\ge2$,
$$
\|Q^2_{j,n}\|_{p/2,r}\le Cc^{2}e^{2cj}\frac r{(r-1)^3}\|\Psi_{j-1}\|^2_{p,\rho}
$$
where $C$ now depends on $\ve$, in places where before it depended on $m$.

On combining our estimates, we have shown that, for $j\le N_0$, we have
$$
\|Q^W_{j,n}\|_{p/2,r}
\le Cc^2e^{2cj}\frac r{(r-1)^3}
\left(\|\Psi_{j-1}\|^2_{p,\rho}
+c\left(\frac r{r-1}\right)^2\right).
$$
We take the $L^{p/2}(\PP)$-norm to deduce that
\begin{equation*}
\tn Q^W_{j,n}1_{\{j\le N_0\}}\tn_{p/2,r}
\le Cc^2e^{2cj}\frac r{(r-1)^3}
\left(\tn\Psi_{j-1}1_{\{j\le N_0\}}\tn^2_{p,\rho}
+c\left(\frac r{r-1}\right)^2\right).
\end{equation*}
When bounding $\tn \max_{1 \leq j \leq n } X^W_{j,n}1_{\{j\le N_0\}}\tn_{p,r}^2$, it is sufficient to take $m=0$ in the decomposition above. In this case $u_n^1(\th,z)=0$, so
\[
\tn \max_{1 \leq j \leq n } X^W_{j,n}1_{\{j\le N_0\}}\tn_{p,r}^2
\leq 2 \left ( \tn \max_{1 \leq j \leq n } X^0_{j,n}1_{\{j\le N_0\}}\tn_{p,r}^2 + \tn \max_{1 \leq j \leq n } X^2_{j,n}1_{\{j\le N_0\}}\tn_{p,r}^2 \right ).
\]
By \eqref{DEST} and \eqref{PIE}, 
now, similarly to above,
\[
\tn P_0^{n-j}DU^i_j(z)\tn_{p,r} \leq C \left ( \frac{r}{r-1} \right ) \tn U^i_j(z) \tn_{p,\rho}.
\]
For $i=0$,
\begin{align*}
\|u^0_j(\Theta_j, z)\|_{p,r}^p 
&=\fint_0^{2\pi}|u^0_j(\Theta_j,re^{i t})|^p dt 
=\fint_0^{2\pi}|u^0_j(0,re^{i (t-\Theta_j)})|^p dt \\
&=\| u^0_j(0,z) \|_{p,r}^p 
\leq \frac{Ce^{pc(j-1)}c^{3p/2}r^{2p-1}}{(r-1)^{3p-1}}
\end{align*}
and hence, by the same argument as in the proof of Lemma \ref{lemma:mbound},
\[ 
\tn \max_{1 \leq j \leq n } X^0_{j,n}1_{\{j\le N_0\}}\tn_{p,r}^2
\leq \frac{Ce^{2c(n-1)}c^{3-2/p}r^{4-2/p}}{(r-1)^{6-2/p}}.
\]
For $i=2$,
\[
\tn U^2_j(z)\tn_{p,r}^p = \E \left ( \E(\| U^2_j(z) \|_{p,r}^p | \cF_{j-1}) \right ). 
\]
Now, by the computation above,
\begin{align*}
\E ( \|u^2_j(\Theta_j, z)\|_{p,r}^p |\cF_{j-1})
&=\fint_0^{2 \pi} \fint_0^{2\pi}|u^2_j(\th,re^{i t})|^p h_j(\th) dt d\th \\
&\leq \frac{Cc^pe^{cp(j-1)}\|\Psi_{j-1}\|_{p,\rho}^p r^{p-1}}{(r-1)^{2p-1}}.
\end{align*}
Hence
\[ 
\tn \max_{1 \leq j \leq n } X^2_{j,n}1_{\{j\le N_0\}}\tn_{p,r}^2
\leq \frac{Ce^{2c(n-1)}c^{2-2/p}r^{2-2/p}}{(r-1)^{4-2/p}}\max_{1\leq j \leq n}\tn\Psi_{j-1}1_{\{j\le N_0\}} \tn_{p,\rho}^2.
\]
Then, on using this estimate in \eqref{MEE}, we obtain \eqref{MNESTEG}.

Now suppose $\eta < 1$. It suffices to prove the result for $p$ sufficiently large, so assume $p > 1+1/(2\ve )$. We use our constraint on $r$, the monotonicity of norms \eqref{PINF}, and same integral comparison as in \eqref{MPA2}
to deduce from \eqref{MNESTEG} the estimate
\begin{align*}
\tn\cW_n1_{\{n\le N_0\}}\tn_{p,r}^2
&\le \frac{Cc}{r^2} \left( \frac r{r-1} \right)^2 \bigg( 1 
+ c^{1-2/p} \left( \frac r{r-1} \right)^{2-2/p} \bigg) 
\sup_{j \leq n}\tn\Psi_{j-1}1_{\{j\le N_0\}}\tn_{p,\rho}^2
\\ & \quad +\frac{Cc^2}{r^2} \left( \frac r{r-1}\right)^{4} \bigg( 1 
+ c^{1-2/p}  \left( \frac r{r-1} \right)^{2-2/p} \bigg). 
\end{align*}
The desired result follows, using our assumption on $p$.  

The case when $\eta=1$ is similar.

\subsection{Estimation of the remainder term}\label{sec:proof_R_est}
The remainder term in the decomposition \eqref{DECOMP} of the differentiated fluctuation process is given by
$$
\cR_n(z)=e^{-cn}\sum_{j=1}^nP_0^{n-j}DR_j(e^{c(1-\eta)(n-j)}z).
$$
In this section, we give the proof of Lemma \ref{lemma:rnbound}, which bounds this quantity.

By the triangle inequality
$$
\tn\cR_n1_{\{n\le N_0\}}\tn_{p,r}
\le e^{-cn}\sum_{j=1}^n\tn P_0^{n-j}DR_j1_{\{j\le N_0\}}\tn_{p,r_{n-j}}.
$$
For $n\le N_0$ and $|z|=r>1+\sqrt c$, we obtained in \eqref{eq:Rn_est} the estimate
\begin{align*}
|R_n(z)-R_n(\infty)|
&\le 
\frac{Cce^{cn}\d_0}r\left(1+\log\left(\frac r{r-1}\right)\right)\left(\d_0+\sqrt c\left(\frac r{r-1}\right)\right)\\
&\q\q
+Cc^{3/2}e^{cn}|\Psi_{n-1}(e^\s z)|
+Cce^{cn}\d_0\int_0^1\fint_0^{2\pi}\frac{|\Psi_{n-1}(F_{s,\th}(z))|}{|ze^{-i\th}-1|}d\th ds.
\end{align*}
We bound the $\|\cdot\|_{p,r}$-norm of the final term on the right as follows:
\begin{align*}
&\fint_0^{2\pi}
\left(\int_0^1\fint_0^{2\pi}
\frac{|\Psi_{n-1}(F_{s,\th}(re^{iu}))|}{|re^{iu}e^{-i\th}-1|}
d\th ds\right)^pdu\\
&\q\q=\int_0^1\dots\int_0^1
\fint_0^{2\pi}
\dots
\fint_0^{2\pi}
\left(
\fint_0^{2\pi}
\prod_{m=1}^p
\frac{|\Psi_{n-1}(F_{s_m,\th_m}(re^{iu}))|}{|re^{iu}e^{-i\th_m}-1|}du
\right)
d\th_1\dots d\th_pds_1\dots ds_p\\
&\q\q=\int_0^1\dots\int_0^1
\fint_0^{2\pi}
\dots
\fint_0^{2\pi}
\left(
\fint_0^{2\pi}
\prod_{m=1}^p
\frac{|\Psi_{n-1}(e^{iu}F_{s_m,\t_m}(r))|}{|re^{-i\t_m}-1|}du
\right)
d\t_1\dots d\t_pds_1\dots ds_p\\
&\q\q\le
\|\Psi_{n-1}\|_{p,r}^p
\fint_0^{2\pi}
\dots
\fint_0^{2\pi}
\prod_{m=1}^p
\frac1{|re^{-i\t_m}-1|}
d\t_1\dots d\t_p\\
&\q\q\le
\left(\frac Cr\right)^p\left(
1+
\log
\left(\frac r{r-1}\right)
\right)^p
\|\Psi_{n-1}\|_{p,r}^p.
\end{align*}
We used the change of variable $\t_m=\th_m-u$ and the identity
$$
F_{s_m,\t_m+u}(re^{iu})=e^{iu}F_{s_m,\t_m}(r)
$$
in the second equality.
Then we used H\"older's inequality and the fact that $|F_{s,\t_m}(r)|\ge r$ for the first inequality, and we used
$$
\fint_0^{2\pi}\frac 1{|re^{-i\t}-1|}d\t
\le\frac Cr\left(1+\log\left(\frac r{r-1}\right)\right)
$$
for the second inequality.
Hence, for all $p\in\N$,
\begin{align*} 
\notag
\|R_n-R_n(\infty)\|_{p,r}
&\le\frac{Cce^{cn}\d_0}r\left(\d_0+\|\Psi_{n-1}\|_{p,r}+\sqrt c\left(\frac r{r-1}\right)\right)
\left(1+\log\left(\frac r{r-1}\right)\right)\\
&\q\q\q\q\q\q\q\q\q\q+Cc^{3/2}e^{cn}\|\Psi_{n-1}\|_{p,r}.
\end{align*}
We use \eqref{DEST} and \eqref{PIE} to obtain \eqref{RNEST}.

Now suppose $n \leq T/c$ for some constant $T$. If $\eta<1$, the result follows, using the integral comparison
\begin{align*}
\sum_{j=1}^{n}\frac c{r_j-1}
&\le\frac c{r-1}+\int_0^n\frac c{re^{c(1-\eta)\t}-1}d\t
\le\frac c{r-1}+\int_0^n\frac{ce^{c(1-\eta)\t}}{re^{c(1-\eta)\t}-1}d\t\\
&\le\frac c{r-1}+\frac1r\left(\frac1{1-\eta}\log\left(\frac r{r-1}\right)+T\right)
\le\frac Cr\left(1+\log\left(\frac r{r-1}\right)\right).
\end{align*}
The argument when $\eta=1$ is similar.

\end{appendices}


\section*{Acknowledgements}
We would like to thank Alan Sola for helpful discussions and for suggestions on an earlier draft of this paper. Amanda Turner would like to thank the University of Geneva for a visiting position in 2019-20 during which time this work was completed.

\bibliography{p}
\end{document}